\numberwithin{equation}{section}
\newsavebox{\mybox}
\newtheorem{theorem}{Theorem}[section]
\newtheorem{lemma}[theorem]{Lemma}
\newtheorem{corollary}[theorem]{Corollary}
\newtheorem{definition}[theorem]{Definition}
\newtheorem{condition}[theorem]{Condition}
\def\dbE{{\mathbb{E}}}
\def\dbF{{\mathbb{F}}}
\def\dbH{{\mathbb{H}}}
\def\dbL{{\mathbb{L}}}
\def\dbN{{\mathbb{N}}}
\def\dbP{{\mathbb{P}}}
\def\dbR{{\mathbb{R}}}
\def\dbS{{\mathbb{S}}}
\def\g{\gamma}
\def\d{\delta}
\def\e{\varepsilon}
\def\l{\lambda}
\def\t{\tau}
\def\f{\varphi}
\def\th{\theta}
\def\o{\omega}
\def\p{\phi}
\def\G{\Gamma}
\def\O{\Omega}
\def\Om{\Omega}
\def\cD{{\cal D}}
\def\cF{{\cal F}}
\def\cL{{\cal L}}
\def\cQ{{\cal Q}}
\def\cU{{\cal U}}
\def\rp{{\mathrm p}}
\def\rq{{\mathrm q}}
\def\${|\!|\!|}
\def\({\left(}
\def\){\right)}
\newcounter{bean}
\newcommand{\benuma}{\setlength{\labelwidth}{.25in}
	\begin{list}%
		{(\alph{bean})}{\usecounter{bean}}}
	\newcommand{\eenuma}{\end{list}}
\newcommand{\bi}{\begin{itemize}}
	\newcommand{\ei}{\end{itemize}}
\newcommand{\be}{\begin{enumerate}}
	\newcommand{\ee}{\end{enumerate}}
\newcommand{\beqs}{\begin{equation*}}
\newcommand{\eeqs}{\end{equation*}}
\newcommand{\beq}{\begin{equation}}
    \newcommand{\eeq}{\end{equation}}
    \newcommand{\bald}{\begin{aligned}}
        \newcommand{\eald}{\end{aligned}}
\newcommand{\beqys}{\begin{eqnarray*}}
	\newcommand{\eeqys}{\end{eqnarray*}}
\newcommand{\beqy}{\begin{eqnarray}}
\newcommand{\eeqy}{\end{eqnarray}}
\newcommand{\wt}{\widetilde}
\newcommand{\tin}{\quad \text{in}\ }
\mathchardef\mhyphen="2D
\newcommand{\bp}{\begin{pmatrix}}
	\newcommand{\ep}{\end{pmatrix}}
\begin{document}
\title{ \bf Properties for transposition solutions to operator-valued BSEEs, and applications to robust second order necessary conditions for controlled SEEs
 }
\date{}

\author{Guangdong Jing \thanks{ 
Department of Mathematics, 
Beijing Institute of Technology,    Beijing 100081, China. \newline  \indent  {E-mail:} 
{jingguangdong@mail.sdu.edu.cn.}
}  
}

\maketitle
\begin{abstract}
  \noindent
This article is concerned with the second order necessary conditions for the stochastic optimal control problem of stochastic evolution equation with model uncertainty when the traditional Pontryagin-type maximum principle holds trivially and does not provide any information depicting the optimal control. The diffusion terms of the state equations are allowed to be control dependent with convex control constraints. Transposition method is adopted to deal with the adjoint operator-valued backward stochastic evolution equations, especially the correction terms. Besides, weak convergence arguments are used to obtain the optimal uncertainty reference measure, in which the regularities of the state processes, variational processes, and adjoint processes in the transposition sense are characterized. Malliavin calculus is applied to pave the way for differentiation theorem of Lebesgue type to deduce the pointwise robust optimality conditions.

\end{abstract}
 
\bigskip

\noindent{\bf AMS  subject classifications}.   93E20, 60H07, 60H10

\bigskip

\noindent{\bf Key Words}. Transposition solution, Stochastic optimal control, Model uncertainty,  Maximum principle, Second order necessary condition

\section{Introduction}

Denote by $(\O, \cF, \mathbf F, \dbP)$ a completed filtered probability space, with $\mathbf F=\{\cF_t\}_{t\ge0}$ the natural filtration and a one-dimensional standard Brownian motion $\{W(t)\}_{t\ge0}$ on it.  Denote the progressive $\sigma$-field  corresponding to $\mathbf F$ by $\dbF$. Let $T>0$.
Denote $H$ a real separable Hilbert space with norm $|\cdot|_{H}$ and inner product $ \langle \cdot,\cdot \rangle_{H}$. Assume that $A$ is an unbounded linear operator on $H$. Further assume that it generates a $C_0$-semigroup $\{e^{At}\}_{t\geq 0}$. 
Denote by $A^*$ the adjoint operator of $A$.  $D(A)$ is a Hilbert space with the usual graph norm, and $A^*$ is the infinitesimal generator of $\{e^{A^*t}\}_{t\geq 0}$, the adjoint $C_0$-semigroup of $\{e^{At}\}_{t\geq 0}$. 
Denote $H_{1}$ another separable Hilbert space, and $U\subset H_1$ is convex and closed.

This article focuses on studying the robust second order necessary conditions for singular stochastic optimal control problems, where the states are governed by a controlled  stochastic evolution equation (SEE for short) with model uncertainty: 
\begin{equation}\label{incso}
    \left\{
    \begin{aligned}
    & {d}x_\gamma(t)=\left( A x_\gamma(t) +a_\gamma(t,x_\gamma(t),u(t))\right) {d}t
        +b_\gamma(t,x_\gamma(t),u(t)) {d}W(t) \tin   t\in(0,T],       \\
    &x_\gamma (0)=x_0\in H,
    \end{aligned}
    \right.
\end{equation}
where concrete conditions assumed for the coefficients 
$a_\cdot(\cdot,\cdot,\cdot):[0,T] \times H\times U\times \Gamma\to H$ 
and 
$b_\cdot(\cdot,\cdot,\cdot):[0,T]  \times H\times U\times \Gamma\to H$
will be made clear later. 
The cost functional with certain uncertainty parameter $\gamma \in \Gamma$ is formulated in the form of 
\begin{equation} 
    J(u(\cdot);   \gamma) := \mathbb{E}\Big(\int_0^Tg_\gamma(t,x_\gamma(t),u(t)) {d}t
          +h_\gamma(x_\gamma(T))\Big), 
\end{equation}
where 
$g_\cdot(\cdot,\cdot,\cdot):[0,T] \times H\times U\times \Gamma\to \mathbb R$
and 
$h_\cdot( \cdot):   H \times \Gamma\to \mathbb R$.
However, generally speaking, the uncertainty parameter on certain  circumstance may follow different laws, and it cannot be determined in advance. Therefore, the cost functional dedicated to any specific law $\l(\cdot)$ on $\Gamma$ should be denoted as 
\begin{equation} 
    J(u(\cdot); \lambda ):= \int_{\Gamma}   J(u(\cdot);   \gamma)\lambda( {d}\gamma),
\end{equation}
and further we define 
\begin{equation}
    J(u(\cdot)  ):=   \sup_{\lambda\in\Lambda}  J(u(\cdot); \lambda), 
\end{equation}
where $\Lambda $ denotes the set of all the possible laws of uncertainty parameters $\gamma \in \Gamma$.

The optimal control problem is to find any admissible control $\bar u \in \mathcal U^p[0,T]$ (see \eqref{rkju}), such that 
\begin{equation}\label{djfsf}
    J(\bar u(\cdot))=  \inf_{u(\cdot) \in \mathcal U^p[0,T]} J(u(\cdot)).
\end{equation}
Any $\bar u (\cdot) \in \mathcal U^p[0,T]$ satisfying \eqref{djfsf} is called an optimal control. The corresponding state $\bar x_\gamma (\cdot;\bar u(\cdot))$ is called an optimal state, and $(\bar u (\cdot), \bar x_\gamma (\cdot))$ an optimal pair.

In order to obtain robust optimality conditions, in the above formulation, we have taken into account the model uncertainty firstly proposed in \cite{humingshangwangfalei2020siam}. Here $\Gamma$ is a locally compact Polish space, and the parameter $\gamma\in\Gamma$ represents various market conditions.  
This category of uncertainty has wide-ranging implications.
For instance, controlled state systems vary between bull and bear markets, and the corresponding cost functions are also different, denoting respectively by $J_1(u(\cdot))$ and $J_2(u(\cdot))$. 
However, it is challenging to predict the probability for experiencing a bull market in reality.  One should more reasonably characterize the cost functional by \[\sup_{ \mathfrak{i}  \in[0,1]}\left\{ \mathfrak{i}  J_1(u(\cdot))+(1- \mathfrak{i}  )J_2(u(\cdot))\right\}.\]  
Nevertheless, the above  is  merely  nominal formulation, since we actually are not confirmed that whether the set 
\[\Lambda^{u}:=
       \Big\{\widetilde\lambda \in\Lambda \big|  \sup_{\lambda\in\Lambda}  J(u(\cdot); \lambda) =   J(u(\cdot); \widetilde\lambda )\Big\}
\]
is nonempty. It is even unknown if the cost functional is measurable. We formally introduce the formulation here, which will be made rigorous afterwards.

For stochastic optimal control problem with nonconvex control constraints and when the diffusion of the state equation is dependent on the control, there are essential difficulties conpared with the deterministic counterpart, such as the loss of orders when estimating the variational terms of stochastic integral.  
The breakthrough on the study of the  first order necessary conditions of stochastic optimal control problems for the general case was made in \cite{pengshige1},  motivated by the theory of non-linear backward stochastic differential equations \cite{pardouxpengshige} and the additionally introduced adjoint equations \cite{pengshige1}, although some special cases in the early stage can be found in \cite{KushnerSchweppe1964,KushnerHJ}. 
Further, results on the second order necessary conditions for singular stochastic optimal control problems were firstly presented much later in \cite{MahmudovBashirov1997}.

Recently, the second order necessary conditions for singular \emph{stochastic} optimal control problems in finite dimension have attracted rather extensive attention, and we only present a brief description of the context. For example, in the classical stochastic differential equations setting: 
\cite{tangshanjian2010dcds} the diffusion term of the control system is control independent and the control regions are allowed to be nonconvex; 
\cite{zhanghaisenzhangxu2015siam} with convex control constraint and diffusion term contains control;
further \cite{Frankowskazhanghaisenzhangxu2017jde,zhanghaisenzhangxu2017siam,zhanghaisenzhangxu2018siamreview,zhanghaisenzhangxu2016scm} with general control constraint and control dependent diffusion term;
and further \cite{Frankowskazhanghaisenzhangxu2019transams,Frankowskazhanghaisenzhangxu2018siam} with additional state constraints; \cite{BonnansSilva2012amo} with either convex control constraints or finitely many equality and inequality constraints over the final state.
A second order stochastic maximum principle for generalized mean-field singular control problem  \cite{guohanchengxiongjie2018mcrf} with diffusion term independent of control variable.
A second order maximum principle for singular optimal controls with recursive utilities of stochastic systems \cite{dongyuchaomengqinxin2019jota} and stochastic delay systems \cite{haotaomengqingxin2019ejc}, in both of which  the control domain is nonconvex and the diffusion coefficients are independent of control.
In \cite{jing23+}, the author studys the second order necessary conditions for optimal control problem of stochastic differential equations with model uncertainty in finite dimension. 
Readers who are interested in several kinds of the second order necessary optimality conditions for stochastic optimal control problem in detail can refer to \cite{zhanghaisenzhangxu2018siamreview}. 
At last, we recommend  works on the second order necessary conditions for deterministic optimal control problems \cite{GabasovKirillova1972,warga1978jde, cuiqingdenglizhangxu2016crmasp, cuiqingdenglizhangxu2019esaim,denglizhangxu2021jde,FrankowskaHoehener2017jde, FrankowskaOsmolovskii2020scl, FrankowskaOsmolovskii2019amo, FrankowskaOsmolovskii2018siam, Hoehener2013amo,louhongwei2010dcds, louhongweiyongjiongmin2018mcrf, Nguyen2019p, Osmolovskii2020jota, Osmolovskii2018jmaa} among numerous others   to interested readers.

When studying the  necessary optimality condition for optimal control problems of controlled SEEs utilizing the dual analysis method, formally, a second order adjoint equation should be introduced \cite{yongjiongminzhouxunyu1999book,luqzhx21}. However, its correction part of the backward SEE that the adjoint processes should satisfy involves stochastic integral of linear operator-valued stochastic  processes with respect to Brownian motions, about which there is not good enough theory in the literature since the space of the bounded linear operators on a Hilbert space is neither separable nor reflexive.  Even so, it should be mentioned that there are still works (cf. \cite{zhouxy93}) in the early stage dedicated to these problems imposing technical restrictions to avoid difficulties exceeding the scope of stochastic analyais in Hilbert spaces and to  perform classical analysis.

Up to this moment, there are several works \cite{Frankowskaluqi2020jde,Frankowskazhangxu2020spa,luqizhanghaisenzhangxu2021siam,luqi2016conference} presenting results on the second order necessary  conditions for the optimal control problem of SEEs. \cite{Frankowskaluqi2020jde,Frankowskazhangxu2020spa} deal with cases with  nonconvexity control constrains utilizing classical variational calculus. As a comparison, \cite{luqizhanghaisenzhangxu2021siam} requires convex control regions. In return, \cite{Frankowskaluqi2020jde,Frankowskazhangxu2020spa} address necessary conditions of integral type, while \cite{luqizhanghaisenzhangxu2021siam} gives necessary conditions of both integral and pointwise type.

In the seminar work \cite{pengshige1}, Peng also illustrates that in the Pontryagin-type maximum principle, the correction part of the second order adjoint equation do not exert influence explicitly. Recently, there are several important works utilizing this observation. For example, \cite{dumeng13,fuhute13} characterize the stochastic maximum principle by means of the Riesz representation theorem, further \cite{liusongwang23} with state and control delay, and \cite{liutang23} defines a kind of operator-valued conditional expectation so as to absorb the correction part of the second order  adjoint equation into the infinitesimal error term.

However, when the optimal control problem is singular, in other words, when the first order necessary optimality condition given by the Pontryagin-type maximum principle is trivial, we should further study the second order necessary optimality condition. Note that in this case, the correction part do explicitly  join the expression of the necessary condition as shown by \cite{zhanghaisenzhangxu2015siam,jing23+} and depicting the property of the correct part of the second order adjoint equation directly is inevitable. Besides, the information for the correction part play an important role in numerical scheme, which will be useful in application. 
Towards solving this problem,  \cite{luqzhx14,luqzhx15,luqzhx18} introduce a kind of relaxed transposition solution and then $V$-transposition solution with respect to the operator-valued backward SEE in the spirit of Riesz representation and finite dimensional approximation, which adapt the classical duality relationship into the definition of solutions for the adjoint equations. By thorough approximation argument and convergence analysis, the regularity of the correction part can be characterized, benefited from which the Pontryagin maximum principle and further the second order necessary optimality condition can then be presented (see also the monograph \cite{luqzhx21} for a detailed introduction).

The motivation of this paper is to study the singular optimal control problem of controlled  stochastic evolution equations with model uncertainty, to obtain integral type and then pointwise robust second order necessary optimality conditions. 
We adopt the relaxed transposition solution and $V$-transposition solution \cite{luqzhx21} methods mentioned above to characterize the properties of the operator-valued backward stochastic evolution equations especially the correction parts to further perform dual principle arguments and Lebesgue differentiation theorem to obtain the pointwise conditions. Moreover, tools in Malliavin calculus will be utilized as proposed initially in \cite{zhanghaisenzhangxu2015siam} and followed by \cite{luqizhanghaisenzhangxu2021siam,zhanghaisenzhangxu2017siam,zhanghaisenzhangxu2018siamreview} to overcome another main obstacle: the deficit of integrable order to be precise, which prevents the usage of classical Lebesgue differentiation theorem. 
Compared with the models under uncertainty \cite{humingshangwangfalei2020siam,hlw23,jing23+}, we study controlled stochastic evolution systems in infinite dimension setting, making the results more widely applicable to, for example, controlled stochastic heat equations, stochastic Schr\"odinger equations, stochastic Cahn-Hilliard equations, stochastic Korteweg de Vries equations, stochastic Kuramoto-Sivashinsky equations, etc.

Accordingly, as one of the main contributions, we derive the regularities of linearized state processes, vector-valued and operator-valued adjoint processes, in Hilbert spaces and Banach spaces separately, about the continuity dependence with respect to the uncertainty parameter, which is essential for all the terms appearing in the variational expansion of cost functional to be well-defined. Besides, as another main contribution, we obtain the optimality uncertainty reference  measure under certain convexity by the weak convergence argument provided with the above regularities and dependence results, which indicates the robustness of the optimal control regarding the supremum in the cost functionals taken over a family of probability measures. 
With the common reference measure, both integral type and then pointwise second order necessary optimality conditions are established in turn.

The rest of the paper is arranged as follows. Section \ref{sctele} are devoted to notations and preliminaries. 
In Section \ref{sctllse}, we present the main assumed conditions in this paper, and study the properties of linearized state equations, mainly about the order analysis. After that, the continuity dependence of the state and variational equations on the uncertainty parameter are shown in Section \ref{sctlcytb}, while the properties for the adjoint equations are mainly gathered in Section \ref{stlpsez}. The main applications are presented in Section \ref{lsecmair}, the proofs of which are carried out separately in Section \ref{lstjxo} and Section \ref{slpwocs}. Several examples are presented in Section \ref{slme} to demonstrate the motivation of the results. At last, a few elementary lemmas and preliminaries  are put in Section \ref{slyz}.

\section{Notations and preliminaries}\label{sctele}
\subsection{Notations}

Denote $X$ a Banach space, with the norm ${|\cdot|_{X}}$. Denote $L_{\mathcal{F}_{t}}^{p}(\Omega ; X)$ the Banach space of all ${\mathcal{F}_{t}}$-measurable random variables $\xi: \Omega \rightarrow X$ satisfying  $\mathbb{E}|\xi|_{X}^{p}<\infty$, ${t \in[0, T]}$, $p \in[1, \infty)$.  
Besides, denote $D_{\mathbb{F}} ([0, T] ; L^{p}(\Omega ; X) )$ the vector space of all the $X$-valued $\mathbf{F}$-adapted processes $\varphi(\cdot):[0, T] \rightarrow L_{\mathcal{F}_{T}}^{p}(\Omega ; X)$, which are right continuous with left limits exist and equipped with the norm 
$|\varphi(\cdot)|_{D_{\mathbb{F}}([0, T] ; L^{p}(\Omega ; X))}:= \sup _{t \in[0, T)}(\mathbb{E}|\varphi(t)|_{X}^{p})^{1 / p}$. 
Denote $C_{\mathbb{F}}([0, T] ; L^{p}(\Omega ; X))$ the Banach space of all the $X$-valued $\mathbf{F}$-adapted processes $\varphi(\cdot):[0, T] \rightarrow L_{\mathcal{F}_{T}}^{p}(\Omega ; X)$ which are continuous and whose norm are inherited from $D_{\mathbb{F}}([0, T] ; L^{p}(\Omega ; X))$. 
What's more, denote two Banach spaces 
with $p_{1}, p_{2}, p_{3}, p_{4} \in[1, \infty)$: 
\begin{align*}
L_{\mathbb{F}}^{p_{1}} (\Omega ; L^{p_{2}}(0, T ; X) )=  \Big\{&f:(0, T) \times \Omega \rightarrow X \  |\   f(\cdot)  \ \text{is }  \mathbf{F}\mbox{-}\text{adapted and}
\\
&|f|_{L_{\mathbb{F}}^{p_{1}}\left(\Omega ; L^{p_{2}}(0, T ; X)\right)} :=  \big[\mathbb{E}\big(\int_{0}^{T}|f(t)|_{X}^{p_{2}} d t\big)^{\frac{p_{1}}{p_{2}}}\big]^{\frac{1}{p_{1}}}<\infty \Big\},
\end{align*}
\begin{align*}
L_{\mathbb{F}}^{p_{2}}\left(0, T ; L^{p_{1}}(\Omega ; X)\right)=  \Big\{&f:(0, T) \times \Omega \rightarrow X \  |\   f(\cdot)  \ \text{is }  \mathbf{F}\mbox{-}\text{adapted and}
\\ 
&|f|_{L_{\mathbb{F}}^{p_{2}}\left(0, T ; L^{p_{1}}(\Omega ; X)\right)} := \big[\int_{0}^{T}\left(\mathbb{E}|f(t)|_{X}^{p_{1}}\right)^{\frac{p_{2}}{p_{1}}} d t\big]^{\frac{1}{p_{2}}}<\infty \Big \}. 
\end{align*}
If $p_1=p_2=p$, denote the above two spaces as $L_{\mathbb{F}}^{p } (0, T ;  X)$.  
For another Banach space $Y$, denote $\cal L(X,Y)$ the space of bounded linear operators from $X$ to $Y$, which are equipped with the usual operator norm, and, if $X=Y$, suppressed as $\cal L(X)$. Denote $\cal L_2(X,Y)$ the space of all Hilbert-Schmidt operators from $X$ to $Y$. 
$\mathcal{L}_{p d} (L_{\mathbb{F}}^{r_{1}}\left(0, T ; L^{r_{2}}(\Omega ; X)\right) ; L_{\mathbb{F}}^{r_{3}}\left(0, T ; L^{r_{4}}(\Omega ; Y)\right) )$ denotes the vector space of all pointwise defined bounded  linear operators $\cal L$ from $L_{\mathbb{F}}^{r_{1}}\left(0, T ; L^{r_{2}}(\Omega ; X)\right)$ to $L_{\mathbb{F}}^{r_{3}}\left(0, T ; L^{r_{4}}(\Omega ; Y)\right)$. That is to say, for a.e.  $(t,\o) \in [0,T]\times \O$, there exists bounded linear operator $\cal L(t,\o)$ satisfying $(\cal L \f)(t,\o) =  \cal L(t,\o) \f(t,\o)$ for any $\f\in L_{\mathbb{F}}^{r_{1}}\left(0, T ; L^{r_{2}}(\Omega ; X)\right)$. Other pointwise defined operator spaces can be defined similarly. 
For any $p\ge 1$, put the admissible control set 
\begin{equation}\label{rkju}
\cU^{p}[0,T] := \big\{u(\cdot)\in L^{p}_\dbF(0,T;H_{1}) \;\big|\; u(t,\omega)\in U,\; (t,\omega)\in [0,T]\times\Omega\; a.e.\big\}.
\end{equation}

Constants $C$ and $C_L$ may vary from line to line among the whole paper. 
Besides, we would like to note two small distinctions regarding the symbols in this article. In general, $\varepsilon$ is exclusively used to denote the parameter in variational calculus for optimal control, while  $\epsilon$ is more broadly employed in infinitesimal analysis. Besides, $p$ represents integrability index, whereas $\rp$ signifies the first adjoint processes.

\subsection{Preliminaries for stochastic evolution equation}
Consider the following SEEs:
\begin{equation}\label{eqmaror}
    \left\{
\begin{aligned}
&d X(t)=[AX(t)+f(t,X(t))]dt +\widetilde f(t,X(t))dW(t) \quad \mbox{in}\  (0,T] 
\\
&X(0)=X_0, 
\end{aligned}
\right.
\end{equation}
where $X_0: \O \to H$ is an $\cF_0$-measurable random variable, $A$ generates a $C_0$-semigroup $\{S(t)\}_{t\ge0}$ on $H$, and $f(\cdot, \cdot), \widetilde f(\cdot, \cdot): [0,T]\times \O \times H \to H$ are two given  functions satisfying the following conditions:
\begin{condition}\label{eqmarorllccl}
(i) Both $f(\cdot,x)$ and $\widetilde f(\cdot, x)$ are $\mathbf F$-adapted for any given $x\in H$;

(ii) There exist two nonnegative functions $L_1(\cdot)\in L^1(0,T), L_2(\cdot)\in L^2(0,T)$, such that for any given $x,y \in H$ and a.e. $t\in[0,T]$, 
\begin{equation}\label{eqmarorllc}
    \left\{
    \begin{aligned}
       & |f(t,x)-f(t,y)|_H\le L_1(t)|x-y|_H, 
        \\
       & |\widetilde f(t,x)-\widetilde f(t,y)|_H\le L_2(t)|x-y|_H,
    \end{aligned}
    \right.
    \indent   \dbP\mbox{-}a.s. 
\end{equation}
\end{condition}
\begin{definition}
    An $H$-valued $\mathbf F$-adapted continuous stochastic process $X(\cdot)$ is called a mild solution to \eqref{eqmaror} if $f(\cdot, X(\cdot))\in L^1(0,T;H)\ a.s.$, $\widetilde f(\cdot, X(\cdot))\in L_{\mathbb F}^{2,loc}(0,T;H)$, and for any $t\in[0,T]$, 
    \begin{equation*}
        \begin{aligned}
            X(t)=S(t)X_0+\int_0^tS(t-s)f(s,X(s))ds +\int_0^t S(t-s)\widetilde f(s,X(s))dW(s) \quad \dbP\mbox{-}a.s. 
        \end{aligned}
    \end{equation*} 
\end{definition}
\begin{lemma}\label{mtuee}
    Let Condition \ref{eqmarorllccl}   hold, $f(\cdot, 0)\in L_{\mathbb F}^p(\O;L^1(0,T;H))$, $\widetilde f(\cdot, 0)\in L_{\mathbb F}^{ p}(\O;L^2(0,T;H))$ for some $p\ge2$. Then for any $X_0\in L_{\cal F_0}^{ p}(\O;H)$, the above equation \eqref{eqmaror} admits a unique mild solution $X(\cdot)\in C_{\mathbb F}([0,T];L^p(\O;H))$. Moreover, 
\begin{align}\label{mtueer}
    |X(\cdot)|_{C_{\mathbb F}([0,T];L^{ p}(\O;H))} \le C\big(|X_0|_{L_{\cal F_0}^{ p}(\O;H)} +|f(\cdot, 0)|_{L_{\mathbb F}^p(\O;L^1(0,T;H))} +|\widetilde f(\cdot, 0)|_{L_{\mathbb F}^p(\O;L^2(0,T;H))}\big). 
\end{align}
\end{lemma}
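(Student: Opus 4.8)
The plan is to establish existence and uniqueness by a contraction mapping argument in the Banach space $C_{\mathbb F}([0,T];L^p(\O;H))$, and then to derive the a priori bound \eqref{mtueer} either as a byproduct of the fixed point construction or, more robustly, by a direct estimate on the mild solution followed by Gronwall's inequality. First I would define the solution map $\Phi$ on $C_{\mathbb F}([0,T];L^p(\O;H))$ by
\[
(\Phi Y)(t) := S(t)X_0 + \int_0^t S(t-s)f(s,Y(s))\,ds + \int_0^t S(t-s)\widetilde f(s,Y(s))\,dW(s).
\]
I would check that $\Phi$ maps the space into itself: the first term is handled by the uniform boundedness of $\{S(t)\}$ on $[0,T]$; the drift term by Minkowski's integral inequality together with the Lipschitz bound $|f(s,Y(s))|_H \le |f(s,0)|_H + L_1(s)|Y(s)|_H$ and the hypothesis $f(\cdot,0)\in L^p_{\mathbb F}(\O;L^1(0,T;H))$; and the stochastic convolution by the Burkholder--Davis--Gundy inequality (or the factorization/maximal inequality for stochastic convolutions), using $L_2(\cdot)\in L^2(0,T)$ and $\widetilde f(\cdot,0)\in L^p_{\mathbb F}(\O;L^2(0,T;H))$. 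Continuity in $t$ of $t\mapsto (\Phi Y)(t)$ in $L^p(\O;H)$ follows from strong continuity of the semigroup and dominated convergence.

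Next I would prove the contraction property. For $Y_1,Y_2$ in the space, subtracting gives
\[
(\Phi Y_1)(t)-(\Phi Y_2)(t) = \int_0^t S(t-s)\big(f(s,Y_1(s))-f(s,Y_2(s))\big)ds + \int_0^t S(t-s)\big(\widetilde f(s,Y_1(s))-\widetilde f(s,Y_2(s))\big)dW(s).
\]
Taking $\mathbb E|\cdot|_H^p$ and applying the same two estimates as above with the Lipschitz constants $L_1,L_2$ replacing the growth bounds, one gets for each $t$
\[
\mathbb E\big|(\Phi Y_1)(t)-(\Phi Y_2)(t)\big|_H^p \le C\Big(\big(\textstyle\int_0^t L_1(s)\,ds\big)^p + \big(\textstyle\int_0^t L_2(s)^2\,ds\big)^{p/2}\Big)\;\|Y_1-Y_2\|_{C_{\mathbb F}([0,t];L^p(\O;H))}^p.
\]
Because $L_1\in L^1$ and $L_2\in L^2$, the prefactor tends to $0$ as $t\downarrow 0$, so on a sufficiently short interval $[0,\delta]$ the map $\Phi$ is a strict contraction; the Banach fixed point theorem yields a unique mild solution on $[0,\delta]$. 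One then iterates over $[\delta,2\delta],[2\delta,3\delta],\dots$ with the (now $\mathcal F_{\delta}$-, $\mathcal F_{2\delta}$-, \ldots measurable) terminal values as new initial data, and since $\delta$ depends only on $L_1,L_2$ and not on the initial condition, finitely many steps cover $[0,T]$ and patch together into a unique mild solution in $C_{\mathbb F}([0,T];L^p(\O;H))$.

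Finally, for the estimate \eqref{mtueer} I would go back to the fixed point identity $X=\Phi X$ and bound $\mathbb E|X(t)|_H^p$ directly: the three terms give, respectively, $C|X_0|_{L^p_{\mathcal F_0}(\O;H)}^p$, $C|f(\cdot,0)|_{L^p_{\mathbb F}(\O;L^1(0,T;H))}^p$, $C|\widetilde f(\cdot,0)|_{L^p_{\mathbb F}(\O;L^2(0,T;H))}^p$, plus a term $C\int_0^t\big(L_1(s)+L_2(s)^2\big)\sup_{r\le s}\mathbb E|X(r)|_H^p\,ds$ after using Jensen/Hölder to pull the $\sup$ out of the time integrals; a (generalized, since the kernel $L_1(s)+L_2(s)^2$ is merely $L^1$) Gronwall inequality then closes the bound with a constant $C$ depending only on $T$, $|L_1|_{L^1}$, $|L_2|_{L^2}$, and an absolute BDG/semigroup constant. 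I expect the main obstacle to be the careful handling of the stochastic convolution: unlike the Itô case there is no martingale, so one must invoke the factorization method (or Da Prato--Kwapień--Zabczyk-type maximal inequalities) to get an $L^p(\O)$-bound on $\sup_t$ of the stochastic convolution in terms of $\mathbb E(\int_0^T|\widetilde f(s,X(s))|_H^2 ds)^{p/2}$ — and to simultaneously obtain the continuity of its paths in $L^p(\O;H)$. Everything else is a routine, if slightly tedious, combination of Minkowski's inequality, Hölder's inequality, and Gronwall's lemma.
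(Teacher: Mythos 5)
Your contraction-mapping scheme is correct and is essentially the standard argument: the paper itself offers no proof of this lemma but quotes it from \cite[Theorem 3.14]{luqzhx21}, and the fixed-point construction in $C_{\mathbb F}([0,T];L^{p}(\Omega;H))$ (with smallness on subintervals supplied by the absolute continuity of $\int L_1\,ds$ and $\int L_2^2\,ds$, then Minkowski/H\"older plus an $L^1$-kernel Gronwall inequality for \eqref{mtueer}) is precisely the proof behind that cited result. The one refinement worth noting is that, since the norm here is $\sup_{t\in[0,T]}(\mathbb E|\cdot|_H^{p})^{1/p}$ (a supremum of moments, not a moment of a supremum), the Burkholder--Davis--Gundy inequality applied at each fixed $t$ to the martingale $r\mapsto\int_0^r S(t-s)\widetilde f(s,Y(s))\,dW(s)$, $r\le t$, already yields everything needed for the contraction and for \eqref{mtueer}; the factorization/maximal-inequality machinery you flag as the main obstacle is only required if one additionally wants the pathwise continuity of the solution demanded by the definition of a mild solution.
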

The  above Lemma \ref{mtuee} can be found in \cite[Theorem 3.14]{luqzhx21}.

\section{Regularities for linearized state equations}\label{sctllse}

Firstly we introduce the assumptions that will be used later.

{\bf(A1)} Suppose that
$a_\cdot(\cdot,\cdot,\cdot):[0,T]\times H\times U \times \Gamma\to H$ and
$b_\cdot(\cdot,\cdot,\cdot):[0,T]\times H\times U \times \Gamma \to H$ are
two maps satisfying:

(i)
For any $(x,u,\gamma )\in
H\times U \times \Gamma$, both $a_\gamma (\cdot,x,u):[0,T]\to H$ and 
$b_\gamma (\cdot,x,u):[0,T]\to H$ are Lebesgue measurable;

(ii)
There is a constant $C_L>0$ such that, for a.e. $t\in[0,T]$, any $x,\tilde x\in H$, any $\gamma \in \Gamma $, any $u,\tilde u\in U$,
\begin{equation*}
\left\{ 
\begin{aligned}
&|a_\gamma (t,x,u) - a_\gamma (t,\tilde x,\tilde u)|_H+|b_\gamma (t,x,u) - b_\gamma (t,\tilde x,\tilde u)|_H \leq C_L\big(|x-\tilde x|_H+|u-\tilde u|_{H_{1}}\big),
\\
&|a_\gamma (t,0,0)|_H +|b_\gamma (t,0,0)|_H \leq C_L.
\end{aligned}
\right.
\end{equation*}

{\bf(A2)} Suppose that
$g_\cdot (\cdot ,\cdot,\cdot):[0,T]\times H\times U \times \Gamma \to \dbR$
and $h_\cdot(\cdot):H\times \Gamma\to \dbR$ are two functionals
satisfying:

 (i)
For any $(x,u,\gamma )\in H\times U\times \Gamma $,
$g_\gamma(\cdot,x,u):[0,T]\to \dbR$ is Lebesgue
measurable;

(ii)
There is a
constant $C_L>0$ such that, for a.e. $t\in[0,T]$, any $x\in H$ and $u\in U$,
\begin{equation*}
|g_\gamma(t,x,u)| +|h_\gamma(x)|
 \leq C_L(1+|x|_{H}^{2}+|u|_{H_{1}}^{2}).
\end{equation*}

{\bf(A3)}  The maps $a_\gamma (t,\cdot,\cdot)$ and $b_\gamma(t,\cdot,\cdot)$, and the functionals $g_\gamma(t,\cdot,\cdot)$ and $h_\gamma(\cdot)$ are $C^2$ with respect to $x$ and $u$. Moreover, there exists a constant $C_L>0$ such that, for a.e. $t\in[0,T]$ and any $(x,u,\gamma )\in H\times U\times \Gamma $,
\begin{equation*}\label{ab}
\left\{
\begin{aligned} 
& |\partial_x a_\gamma (t,x,u)|_{\cal L(H)}+|\partial_x b_\gamma (t,x,u)|_{\cL(H)} + |\partial_u a_\gamma (t,x,u)|_{\cL(H_1;H)}+ 
|\partial_u b_\gamma (t,x,u)|_{\cL(H_1;H)}\leq C_L,
\\
&|\partial_{xx}a_\gamma (t,x,u)|_{\cL(H\times H;H)}+|\partial_{xx}b_\gamma (t,x,u)|_{\cL(H\times  H;H)}+|\partial_{xu}a_\gamma (t,x,u)|_{\cL(H\times  H_{1};H)}
\\
&+|\partial_{xu}b_\gamma(t,x,u)|_{\cL(H\times H_{1};H)}+|\partial_{uu}a_\gamma(t,x,u)|_{\cL(H_{1}\times  H_{1};H)}+|\partial_{uu}b_\gamma(t,x,u)|_{\cL(H_{1}\times H_{1};H)}
\leq C_L,
\\
&|\partial_x g_\gamma (t,x,u)|_{H} +|\partial_u g_\gamma(t,x,u)|_{H_{1}}+|\partial_x h_\gamma(x)|_{H} \leq C_L(1+|x|_{H}+|u|_{H_{1}}), 
\\
&|\partial_{xx}g_\gamma(t,x,u)|_{\cL(H)}+|\partial_{xu}g_\gamma(t,x,u)|_{\cL(H;H_{1})}+|\partial_{uu}g_\gamma(t,x,u)|_{\cL(H_{1})}+|\partial_{xx}h_\gamma(x)|_{\cL(H)}\leq C_L.
\end{aligned}
\right.
\end{equation*}

{\bf(A4)} $\Gamma$ is a locally compact Polish space with distance $ \mathsf{d} $, where $\gamma\in\Gamma$ is the uncertainty parameter. For each $N>0$, there exists a modulus of continuity $\rho_N:[0,\infty)\to[0,\infty)$ such that
\begin{equation*}
\begin{aligned}
  |\varphi_\gamma(t,x,u)-\varphi_{\gamma'}(t,x,u)| \le \rho_N \left( \mathsf{d} (\gamma, \gamma')\right)
\end{aligned}
\end{equation*}
for any $t\in[0,T], x\in H$ and $|x|\le N, u \in U, \gamma,\gamma'\in\Gamma$, where $\varphi_\gamma$ represents $a_\gamma, b_\gamma, g_\gamma, h_\gamma$ and its derivatives in $(x,u)$ up to second orders separately.

{\bf(A5)} The control region $U(\subset H_1)$ is nonempty and convex.

{\bf(A6)}  $\Lambda$ is a weakly compact and convex set of probability measures on $(\Gamma,\mathcal{B}(\Gamma))$.

Let $\bar u(\cdot)\in \cU^{p}[0,T]$ be an optimal control and ${\bar x}_\gamma (\cdot)$ the corresponding  state of the controlled system \eqref{incso}. Let $u(\cdot)\in\cU^{p}[0,T]$ be another
admissible control with related state $x_\gamma (\cdot)$. 
Set $\d u(\cdot)=u(\cdot)-\bar u(\cdot)$, $\d x_\gamma (\cdot)= x_\gamma (\cdot;{u(\cdot)})-{\bar  x}_\gamma (\cdot;{\bar u(\cdot)})$, $u^\varepsilon (\cdot)=\bar u(\cdot) + \varepsilon (u(\cdot)-\bar u(\cdot))$, $\d  u^\varepsilon (\cdot)= u^\varepsilon (\cdot) - \bar u(\cdot)$, $\d x^\varepsilon_\gamma (\cdot)= x_\gamma (\cdot;{u^\varepsilon (\cdot)})-{\bar  x}_\gamma (\cdot;{\bar u(\cdot)})$. 
Consider the following first and second order linearized evolution equations:
\begin{equation}\label{lisef}
\left\{
\begin{aligned} 
&dy_\gamma = \big(Ay_\gamma + \partial_{x}a_\gamma[t] y_\gamma +  \partial_{u}a_\gamma[t]\d u \big)dt + \big(\partial_{x}b_\gamma[t] y_\gamma + \partial_{u}b_\gamma[t]\d u \big)
dW(t) \quad  \mbox{ in }(0,T],  
\\
&y_\gamma(0)=0,
\end{aligned}
\right.
\end{equation}
\begin{equation}\label{lises}
\left\{
\begin{aligned} 
&dz_\gamma = \big(Az_\gamma + \partial_{x}a_\gamma[t] z_\gamma  + \partial_{xx}a_\gamma[t](y_\gamma,y_\gamma) + 2\partial_{xu}a_\gamma[t](y_\gamma,\d u) + \partial_{uu}a_\gamma[t](\d u,\d u)\big)dt 
\\
& \indent \indent + \big(\partial_{x}b_\gamma[t]z_\gamma + \partial_{xx}b_\gamma[t](y_\gamma,y_\gamma) +2\partial_{xu}b_\gamma[t](y_\gamma,\d u) + \partial_{uu}b_\gamma[t](\d u,\d u) \big)dW(t) \quad \mbox{ in } (0,T],
\\
&z_\gamma(0)=0.
\end{aligned}
\right.
\end{equation}

By the Lemma \ref{mtuee}, under the assumptions given in (A1) and (A3), together with the subset relation 
\begin{equation}\label{subssde}
    {C_{\mathbb F}([0,T];L^p(\O;H))} \subset L_{\mathbb F}^1(0,T;L^p(\O;H)) \subset L_{\mathbb F}^p(\O;L^1(0,T;H)), 
\end{equation} 
the Eq. \eqref{lisef} has a unique solution  in the mild sense, which  has the following  explicit form 
\begin{equation}\label{lbyxfg}
\begin{aligned} 
y_\gamma(t) = \int_{0}^{t} e^{A(t-s)} \big( \partial_{x}a_\gamma[t] y_\gamma +  \partial_{u}a_\gamma[t]\d u \big)dt + \int_{0}^{t}  e^{A(t-s)}  \big(\partial_{x}b_\gamma[t] y_\gamma & + \partial_{u}b_\gamma[t]\d u \big) dW(t),
\\
&  \quad  t\in (0,T], \  \dbP\mbox{-}a.s. 
\end{aligned}
\end{equation}
which further derives that the map 
$\d u\to y_\g^{\d u}$  is linear and for any $\kappa \in(0,1)$, 
\begin{align}\label{loybptg}
    y_\g^{\kappa \d u_1 + (1-\kappa) \d u_2} (t)= \kappa  y_\g^{\d u_1  } (t) + (1-\kappa) y_\g^{  \d u_2}(t), \quad  \   t\in [0,T], \  \dbP\mbox{-}a.s.,  \  \gamma \in \Gamma . 
\end{align}

For $\f=a,b$, and $g$, denote 
\begin{align*}
&\partial_x\f_\gamma [t] := \partial_x\f_\gamma(t,\bar x_\gamma(t),\bar u(t)),  \quad  \partial_u\f_\gamma [t]
:= \partial_u\f_\gamma(t,\bar x_\gamma(t),\bar u(t)) , \\ 
&\partial_{xx}\f_\gamma [t] := \partial_{xx}\f_\gamma(t,\bar x_\gamma(t),\bar u(t)), \quad   
\partial_{uu}\f_\gamma [t] := \partial_{uu}\f_\gamma (t,\bar x_\gamma(t),\bar u(t)), 
\\ & \partial_{xu}\f_\gamma [t] : = \partial_{xu}\f_\gamma(t,\bar x_\gamma(t),\bar u(t)),
\\ & 
\partial_x{\tilde \f}_\gamma [t]  := \int_0^1 \partial_x\f_\gamma (t,\bar x_\gamma(t) + \th \d x_\gamma(t), u(t))d\th, 
\\ & 
\partial_u{\tilde \f}_\gamma [t]  := \int_0^1 \partial_u\f_\gamma (t,\bar x_\gamma(t) , \bar u(t) + \th \delta u(t))d\th ,
\\ & 
\partial_{xx}\tilde{\f}_\gamma^{\e}[t]:=\int_{0}^{1}(1-\theta)\partial_{xx}\f_\gamma (t,\bar x_\gamma(t)+\theta\delta x_\gamma^{\e}(t),\bar u(t)+\theta\e \d u(t))d\theta,
\\  
&\partial_{xx}\tilde{h}_\gamma^{\e}(T):=\int_{0}^{1}(1-\theta)\partial_{xx}h_\gamma(\bar x_\gamma(T)+\theta\delta x_\gamma^{\e}(T))d\theta.
\end{align*}
$\partial_{xu}\tilde{\f}_\gamma^{\e}[t], \partial_{uu}\tilde{\f}_\gamma^{\e}[t]$ can be defined similarly. 
The methods employed in the proof of Lemmas \ref{maests} and \ref{dxyzapel} are somewhat established and standard in the optimal control literature, so we will only sketch some of the details.

\begin{lemma}\label{maests}
Let {(A1)} and {(A3)} hold. Then, for any $p\ge2, \gamma \in \Gamma $, it deduces 
\begin{align*}
   & |\delta x_\gamma|_{C_\dbF([0,T];L^{p}(\O;H))}\le     C|\d u|_{L^{p}_\dbF(\Om;L^2(0,T;H_1))},
    \quad 
    |y_\gamma|_{C_\dbF([0,T];L^{p}(\O;H))}\le C|\d u|_{L^{p}_\dbF(\Om;L^2(0,T;H_1))},
    \\
   & |z_\gamma|_{C_\dbF([0,T];L^{p}(\O;H))}\le C|\d u|^{2}_{L^{2p}_\dbF(\Om;L^4(0,T;H_1))},
    \quad 
    |\delta x_\gamma-y_\gamma|_{C_\dbF([0,T];L^{p}(\O;H))} \le C|\d u|^{2}_{L^{2p}_\dbF(\Om;L^4(0,T;H_1))}. 
\end{align*}
\end{lemma}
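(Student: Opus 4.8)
The plan is to obtain all four bounds from the a priori estimate of Lemma \ref{mtuee}, applied to each of $\delta x_\gamma$, $y_\gamma$, $z_\gamma$ and $r_\gamma := \delta x_\gamma - y_\gamma$ after exhibiting the corresponding process as the mild solution of a linear SEE
\[
dX = \big(AX + \Lambda_1(t)X + F(t)\big)\,dt + \big(\Lambda_2(t)X + G(t)\big)\,dW(t), \qquad X(0) = 0,
\]
with $\cL(H)$-valued $\dbF$-adapted coefficients $\Lambda_1,\Lambda_2$ bounded by $C_L$ — so the constant $C_L$ is an admissible Lipschitz modulus both in $L^1(0,T)$ and in $L^2(0,T)$, in the sense of Condition \ref{eqmarorllccl} — and a forcing pair $(F,G)$ to be read off in each case. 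Lemma \ref{mtuee} then yields
\[
|X|_{C_\dbF([0,T];L^p(\O;H))} \le C\big(|F|_{L^p_\dbF(\O;L^1(0,T;H))} + |G|_{L^p_\dbF(\O;L^2(0,T;H))}\big),
\]
with $C$ depending only on $C_L$, $T$, $p$ (hence uniform in $\gamma$, since the bounds in (A1), (A3) are), so everything reduces to estimating the forcing. Well-posedness in $C_\dbF([0,T];L^p(\O;H))$ for all $p\ge2$ of the state and variational equations follows from the same lemma together with (A1), (A3) and the inclusions \eqref{subssde}; for the last two estimates one may moreover assume the right-hand sides are finite, since otherwise there is nothing to prove.

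For $\delta x_\gamma$ and $y_\gamma$ the forcing is linear in $\d u$. From the equation \eqref{incso} satisfied by $x_\gamma$ and $\bar x_\gamma$ and the interpolation identity $a_\gamma(t,x_\gamma,u) - a_\gamma(t,\bar x_\gamma,\bar u) = \partial_x\tilde a_\gamma[t]\,\delta x_\gamma + \partial_u\tilde a_\gamma[t]\,\d u$ (and likewise for $b$), the process $\delta x_\gamma$ solves the above equation with $\Lambda_1 = \partial_x\tilde a_\gamma[t]$, $\Lambda_2 = \partial_x\tilde b_\gamma[t]$ and $F = \partial_u\tilde a_\gamma[t]\,\d u$, $G = \partial_u\tilde b_\gamma[t]\,\d u$, all bounded via (A3), so that $|F(t)|_H + |G(t)|_H \le C_L|\d u(t)|_{H_1}$; the same holds for $y_\gamma$ by reading off \eqref{lisef}, now with coefficients $\partial_x a_\gamma[t],\partial_x b_\gamma[t]$ and forcing $\partial_u a_\gamma[t]\,\d u,\partial_u b_\gamma[t]\,\d u$. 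A Hölder inequality in $t$, $\int_0^T|\d u|\,dt \le T^{1/2}\big(\int_0^T|\d u|^2\,dt\big)^{1/2}$, then bounds the right-hand side of the displayed estimate by $C|\d u|_{L^p_\dbF(\O;L^2(0,T;H_1))}$, which is the first and second bounds. I would establish these for \emph{every} $p\ge2$, since the remaining two bounds invoke them at the doubled exponent $2p$.

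For $z_\gamma$ and $r_\gamma$ the coefficients are again $\partial_x a_\gamma[t],\partial_x b_\gamma[t]$, but the forcing is now quadratic. From \eqref{lises}, the drift forcing of $z_\gamma$ is $\partial_{xx}a_\gamma[t](y_\gamma,y_\gamma) + 2\partial_{xu}a_\gamma[t](y_\gamma,\d u) + \partial_{uu}a_\gamma[t](\d u,\d u)$, hence by (A3) and $2ab \le a^2 + b^2$ it is $\le C_L(|y_\gamma(t)|_H^2 + |\d u(t)|_{H_1}^2)$ in $H$, and similarly for the diffusion forcing; for $r_\gamma$, subtracting \eqref{lisef} from the $\delta x_\gamma$-equation and rewriting $\partial_x\tilde a_\gamma[t]\,\delta x_\gamma - \partial_x a_\gamma[t]\,y_\gamma = \partial_x a_\gamma[t]\,r_\gamma + (\partial_x\tilde a_\gamma[t] - \partial_x a_\gamma[t])\,\delta x_\gamma$ leaves a forcing made of $(\partial_x\tilde a_\gamma[t] - \partial_x a_\gamma[t])\,\delta x_\gamma$ and $(\partial_u\tilde a_\gamma[t] - \partial_u a_\gamma[t])\,\d u$ (and the $b$-analogues), which by the $(x,u)$-Lipschitz continuity of $\partial_x a_\gamma$ and $\partial_u a_\gamma$ — a consequence of the boundedness of $\partial_{xx}a_\gamma,\partial_{xu}a_\gamma,\partial_{uu}a_\gamma$ in (A3) via the fundamental theorem of calculus — is again $\le C_L(|\delta x_\gamma(t)|_H^2 + |\d u(t)|_{H_1}^2)$ in $H$. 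To bound such a forcing in $L^p_\dbF(\O;L^1(0,T;H))$ and in $L^p_\dbF(\O;L^2(0,T;H))$, I would handle the contributions depending only on $\d u$ by Hölder in $t$, producing $C|\d u|^2_{L^{2p}_\dbF(\O;L^4(0,T;H_1))}$, and the $|y_\gamma|^2$ (resp. $|\delta x_\gamma|^2$) contributions by Minkowski's integral inequality in $\O$, e.g.
\[
\Big[\E\Big(\int_0^T|y_\gamma(t)|_H^2\,dt\Big)^p\Big]^{1/p} \le \int_0^T\big(\E|y_\gamma(t)|_H^{2p}\big)^{1/p}\,dt \le T\,|y_\gamma|^2_{C_\dbF([0,T];L^{2p}(\O;H))},
\]
and likewise $\big[\E(\int_0^T|y_\gamma(t)|_H^4\,dt)^{p/2}\big]^{1/p} \le T^{1/2}\,|y_\gamma|^2_{C_\dbF([0,T];L^{2p}(\O;H))}$; applying the already proved first and second bounds at exponent $2p$, together with $L^4(0,T)\hookrightarrow L^2(0,T)$, replaces these by $C|\d u|^2_{L^{2p}_\dbF(\O;L^4(0,T;H_1))}$. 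Absorbing all surplus powers of $T$ into $C$ gives the third and fourth bounds.

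I expect the only genuinely non-mechanical point to be the integrability matching in this last step: controlling $z_\gamma$ (or $r_\gamma$) in $C_\dbF([0,T];L^p(\O;H))$ forces control of the quadratic forcing in $L^p_\dbF(\O;L^1)$ (resp. $L^p_\dbF(\O;L^2)$), which after Minkowski demands an $L^{2p}(\O)$-bound on $y_\gamma$ (resp. $\delta x_\gamma$) — this is precisely why the first two bounds must hold for all $p\ge2$ and be used at the doubled exponent, and why the last two bounds must carry the stronger norm $|\cdot|_{L^{2p}_\dbF(\O;L^4(0,T;H_1))}$ on the right. Everything else is routine Hölder bookkeeping in the time variable, which I would only indicate rather than carry out.
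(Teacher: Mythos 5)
Your proposal is correct and follows essentially the same route as the paper: each of $\delta x_\gamma$, $y_\gamma$, $z_\gamma$, $\delta x_\gamma-y_\gamma$ is written as the mild solution of a linear SEE with coefficients bounded via (A1), (A3), Lemma \ref{mtuee} is applied, and the forcing is estimated using the earlier bounds at the doubled exponent $2p$ (the paper only sketches this bookkeeping, and uses the equivalent decomposition in which the difference $\partial_x\tilde a_\gamma-\partial_x a_\gamma$ multiplies $y_\gamma$ rather than $\delta x_\gamma$, which changes nothing). Your explicit H\"older/Minkowski accounting of why the right-hand sides carry the $L^{2p}_\dbF(\Om;L^4(0,T;H_1))$ norm is exactly the standard argument the paper leaves implicit.
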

\begin{proof}
The process $\d x_\gamma $ solves the following equation 
\begin{equation*}
\left\{
\begin{aligned} 
& d\d x_\gamma = \big[A\d x_\gamma +  \partial_x{\tilde a}_\gamma [t] \d x_\gamma + \partial_u{\tilde a}_\gamma [t]\d u  \big]dt + \big[ \partial_x{\tilde b}_\gamma [t]  \d x_\gamma +  \partial_u{\tilde b}_\gamma [t]\d u \big]dW(t) &\mbox{ in  }(0,T],
        \\
& \d x_\gamma(0)=0.
\end{aligned} \right.
\end{equation*}
By  (A1), (A3) and Lemma \ref{mtuee}, it deduces 
$|\delta x_\gamma|_{C_\dbF([0,T];L^{p}(\O;H))}\le     C|\d u|_{L^{p}_\dbF(\Om;L^2(0,T;H_1))}$. 
The process $y_\gamma$ satisfies \eqref{lisef}. By Lemma \ref{mtuee} again, 
$|y_\gamma |_{C_\dbF([0,T];L^{p}(\O;H))}\le C|\d u|_{L^{p}_\dbF(\Om;L^2(0,T;H_1))}$. 
Similarly, for the process $z_\gamma$ satisfying \eqref{lises}, utilizing Lemma \ref{mtuee} together with the above estimate for $y_\gamma$, it can be derived that 
$|z_\gamma|_{C_\dbF([0,T];L^{p}(\O;H))}\le C|\d u|^{2}_{L^{2p}_\dbF(\Om;L^4(0,T;H_1))}$.

The process $\d x_\gamma(\cdot)-y_\gamma(\cdot)$ satisfies 
\begin{equation*}
\left\{
\begin{aligned}
&d(\d x_\gamma -y_\gamma ) = \big[ A(\d x_\gamma -y_\gamma ) + \partial_x{\tilde a}_\gamma [t] (\d x_\gamma -y_\gamma ) +\big(\partial_x{\tilde a}_\gamma [t]- \partial_x{ a}_\gamma [t]\big)y_\gamma  
\\ & \indent\indent\indent\indent 
+ \big(\partial_u{\tilde a}_\gamma [t]- \partial_u{ a}_\gamma [t]\big)\d u  \big]dt + \big[ \partial_x{\tilde b}_\gamma [t] (\d x_\gamma -y_\gamma ) 
\\
& \indent\indent\indent\indent +\big(\partial_x{\tilde b}_\gamma [t]- \partial_x{ b}_\gamma [t]\big)y_\gamma  + \big(\partial_u{\tilde b}_\gamma [t]- \partial_u{ b}_\gamma [t]\big)\d u  \big] dW(t) \quad  \mbox{ in }(0,T],
\\
&(\d x_\gamma -y_\gamma )(0)=0.
\end{aligned}
\right.
\end{equation*}
By Lemma \ref{mtuee}, the  Assumptions  (A1) and (A3),  together with the estimation for $y_\gamma $, it can be  deduced that  
$|\delta x_\gamma-y_\gamma|_{C_\dbF([0,T];L^{p}(\O;H))} \le C|\d u|^{2}_{L^{2p}_\dbF(\Om;L^4(0,T;H_1))}$.
\end{proof}

\begin{corollary}
    Let {(A1)} and {(A3)} hold. 
For any $p\ge 2$, by Lemma \ref{maests}, for any $\gamma \in \Gamma $, it holds that 
\begin{equation}\label{ytrui}
\begin{aligned}
&|\delta x_\gamma^{\e}|_{C_\dbF([0,T];L^{p}(\O;H))}\le C\e|\d u|_{L^{p}_\dbF(\Om;L^2(0,T;H_1))},
\\
&|\delta x_\gamma^{\e}-\e  y_\gamma|_{C_\dbF([0,T];L^{p}(\O;H))} \le C\e^2|\d u|^{2}_{L^{2p}_\dbF(\Om;L^4(0,T;H_1))}.
\end{aligned}
\end{equation}
\end{corollary}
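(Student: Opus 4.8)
The plan is to reduce the corollary directly to Lemma \ref{maests} by exploiting the two structural facts about the perturbed control $u^\e(\cdot) = \bar u(\cdot) + \e\d u(\cdot)$: first, $\d u^\e(\cdot) = \e\,\d u(\cdot)$ by definition, so $\d x_\gamma^\e$ is nothing but ``$\d x_\gamma$ with $\d u$ replaced by $\e\,\d u$''; second, by the linearity of the map $\d u \mapsto y_\gamma^{\d u}$ recorded in \eqref{loybptg} (equivalently in the explicit form \eqref{lbyxfg}), we have $y_\gamma^{\e\,\d u} = \e\, y_\gamma^{\d u}$.

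For the first estimate in \eqref{ytrui}, I would apply the first inequality of Lemma \ref{maests} with the admissible control $u^\e(\cdot)$ in place of $u(\cdot)$; since the associated control increment is $\d u^\e = \e\,\d u$, this gives directly
\[
|\delta x_\gamma^{\e}|_{C_\dbF([0,T];L^{p}(\O;H))}\le C|\d u^\e|_{L^{p}_\dbF(\Om;L^2(0,T;H_1))} = C\e|\d u|_{L^{p}_\dbF(\Om;L^2(0,T;H_1))},
\]
using homogeneity of the norm in the scalar $\e$. For the second estimate, write
\[
\delta x_\gamma^{\e}-\e y_\gamma = \delta x_\gamma^{\e} - y_\gamma^{\e\,\d u},
\]
where the equality uses $y_\gamma^{\e\,\d u} = \e\,y_\gamma^{\d u}$ from \eqref{loybptg}. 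Now $\delta x_\gamma^{\e} - y_\gamma^{\e\,\d u}$ is exactly the quantity ``$\delta x_\gamma - y_\gamma$'' for the control increment $\e\,\d u$, so the fourth inequality of Lemma \ref{maests} applied with $u^\e$ yields
\[
|\delta x_\gamma^{\e}-\e y_\gamma|_{C_\dbF([0,T];L^{p}(\O;H))} \le C|\e\,\d u|^{2}_{L^{2p}_\dbF(\Om;L^4(0,T;H_1))} = C\e^2|\d u|^{2}_{L^{2p}_\dbF(\Om;L^4(0,T;H_1))},
\]
again by homogeneity.

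There is essentially no obstacle here; the only point requiring a line of justification is the identification $y_\gamma^{\e\,\d u} = \e\, y_\gamma^{\d u}$, which follows either from \eqref{loybptg} (taking a degenerate convex combination, or more directly from the linearity of $\d u \mapsto y_\gamma^{\d u}$ that \eqref{loybptg} asserts) or by inspecting the mild-solution formula \eqref{lbyxfg}, in which $\d u$ enters linearly and the equation for $y_\gamma$ is linear in $(y_\gamma,\d u)$, so scaling the input scales the solution. One should also note that $u^\e(\cdot) \in \cU^p[0,T]$ so that Lemma \ref{maests} genuinely applies: this is where Assumption (A5) (convexity of $U$) is used, since $u^\e = (1-\e)\bar u + \e u$ is a convex combination of two admissible controls for $\e \in (0,1)$. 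Everything else is the observation that the constant $C$ from Lemma \ref{maests} does not depend on the control increment, so it carries through unchanged.
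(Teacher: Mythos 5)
Your proposal is correct and coincides with the paper's own treatment: the corollary is stated there as an immediate consequence of Lemma \ref{maests} applied to the control increment $\d u^\e=\e\,\d u$, together with the linearity of $\d u\mapsto y_\gamma^{\d u}$ from \eqref{lbyxfg}--\eqref{loybptg}, which is exactly your argument. The remarks on homogeneity of the norms and on admissibility of $u^\e$ via convexity of $U$ are the same routine observations the paper leaves implicit.
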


\begin{lemma}\label{dxyzapel}
Under Assumptions (A1) and (A3), there exists a subsequence $\{\e_{n}\}_{n=1}^{\infty}$,  such that
\begin{equation}\label{dxyzape}
\lim_{n\to \infty}\frac{1}{\e_{n}^2} {\big|\delta x_\gamma^{\e_n}-\e_{n} y_\gamma-\frac{\e_{n}^2}{2}z_\gamma\big|}_{C_\dbF([0,T];L^{2}(\O;H))} =0.
\end{equation}
Besides, together with Assumption (A2), for a subsequence $\{\e_{n}\}_{n=1}^{\infty}$ satisfying $x_\gamma^{\e_{n}}(\cdot)\to \bar x_\gamma(\cdot)$  a.e. $(t,\o)\in [0,T]\times\O$ as $n\to\infty$, it yields  
\begin{align}
&\lim_{n\to \infty}\frac{1}{\e_{n}^2} \dbE \int_{0}^{T} {\big\langle\partial_{xx}\tilde{g}_\gamma^{\e_{n}}[t]\delta x_\gamma^{\e_{n}}(t),\delta x_\gamma^{\e_{n}}(t)\big\rangle}_H-\frac{\e_{n}^2}{2}{\big\langle \partial_{xx}g_\gamma [t]y_\gamma(t), y_\gamma(t)\big\rangle}_H dt=0,  \label{dxyzape1}
\\
&\lim_{n\to \infty}\frac{1}{\e_{n}}
\dbE \int_{0}^{T} \big\langle\partial_{xu}\tilde{g}_\gamma^{\e_{n}}[t]\delta x_\gamma^{\e_{n}}(t), \d u(t)\big\rangle_{H_{1}}-\frac{\e_{n} }{2}\big\langle \partial_{xu}g_\gamma [t]y_\gamma(t), \d u(t)\big\rangle_{H_{1}} dt=0,  \label{dxyzape2}
\\
&\lim_{n\to \infty} \dbE \int_{0}^{T} \big\langle\partial_{uu}\tilde{g}_\gamma^{\e_{n}}[t] \d u(t),\d u(t)\big\rangle_{H_{1}}-\frac{1}{2}\big\langle \partial_{uu}g_\gamma [t]\d u(t), \d u(t)\big\rangle_{H_{1}} dt=0,  \label{dxyzape3}
\\
&\lim_{n\to \infty}\frac{1}{\e_{n}^2} \dbE \big[{\big\langle \partial_{xx}\tilde{h}_\gamma^{\e_{n}}(\bar x_\gamma(T))\delta x_\gamma^{\e_{n}}(T),\delta x_\gamma^{\e_{n}}(T)\big\rangle}_H -\frac{\e_{n}^2}{2}{\big\langle \partial_{xx}h_\gamma (\bar x_\gamma(T))y_\gamma(T), y_\gamma(T)\big\rangle}_H\big]=0.  \label{dxyzape4}
\end{align}
\end{lemma}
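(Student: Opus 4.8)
The plan is to establish \eqref{dxyzape} first by a direct estimate on the mild-solution representation, and then deduce \eqref{dxyzape1}--\eqref{dxyzape4} as consequences, using the boundedness of the second derivatives from (A3), the growth conditions in (A2), and dominated convergence along the selected subsequence. For \eqref{dxyzape}, set $r_\gamma^{\e} := \delta x_\gamma^{\e} - \e y_\gamma - \tfrac{\e^2}{2} z_\gamma$. A Taylor expansion of $a_\gamma$ and $b_\gamma$ around $(\bar x_\gamma(t), \bar u(t))$ with integral remainder (which is exactly what the notations $\partial_{xx}\tilde\f_\gamma^{\e}[t]$, etc., are designed to encode) shows that $r_\gamma^{\e}$ solves a linear mild SEE of the same type as \eqref{lisef}, with zero initial condition and inhomogeneous terms that are sums of two kinds of pieces: (a) ``frozen-coefficient'' errors of the form $\big(\partial_{xx}\tilde a_\gamma^{\e}[t] - \partial_{xx}a_\gamma[t]\big)(\e y_\gamma, \e y_\gamma)$ and its $\partial_{xu}$, $\partial_{uu}$ analogues, together with the cross terms coming from replacing $\delta x_\gamma^{\e}$ by $\e y_\gamma$ inside the second-order differentials; and (b) genuinely higher-order terms involving $\delta x_\gamma^{\e} - \e y_\gamma$ against $z_\gamma$ or against first derivatives. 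Applying Lemma \ref{mtuee} to this equation for $r_\gamma^{\e}$ bounds $|r_\gamma^{\e}|_{C_\dbF([0,T];L^2(\O;H))}$ by the $L^2_\dbF(\O;L^1(0,T;H))$- and $L^2_\dbF(\O;L^2(0,T;H))$-norms of the inhomogeneities. Using the boundedness of all second derivatives (A3), the estimates of Lemma \ref{maests} and its Corollary (in particular $|\delta x_\gamma^{\e} - \e y_\gamma|_{C_\dbF} \le C\e^2|\d u|^2$ and $|z_\gamma|_{C_\dbF} \le C|\d u|^2$), the type-(b) terms are immediately $o(\e^2)$ in a suitable norm. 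For the type-(a) terms one writes, e.g., $\big(\partial_{xx}\tilde a_\gamma^{\e}[t] - \partial_{xx} a_\gamma[t]\big)(\e y_\gamma, \e y_\gamma)$ with $\e^2 |y_\gamma|_H^2$ factored out and the operator-norm difference $|\partial_{xx}\tilde a_\gamma^{\e}[t] - \partial_{xx} a_\gamma[t]|_{\cL(H\times H;H)}$ remaining; this difference is bounded by $2C_L$ and, since $\bar x_\gamma(t) + \theta \delta x_\gamma^{\e}(t) \to \bar x_\gamma(t)$ a.e. along a subsequence (by the Corollary, $\delta x_\gamma^{\e}\to 0$ in $C_\dbF$, hence a subsequence converges a.e.) and the second derivatives are continuous in $(x,u)$, it tends to $0$ a.e.; dominated convergence then gives that its contribution, divided by $\e^2$, vanishes. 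Passing to a further subsequence if needed (the countably many limit requirements are all of this same a.e.-convergence plus domination type, so a single diagonal subsequence works) yields \eqref{dxyzape}.

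For \eqref{dxyzape1}--\eqref{dxyzape4}, the idea is the same two-step split. Take the subsequence $\{\e_n\}$ along which additionally $x_\gamma^{\e_n}(\cdot) \to \bar x_\gamma(\cdot)$ a.e., as hypothesized. For \eqref{dxyzape1}, write
\[
\frac{1}{\e_n^2}\big\langle \partial_{xx}\tilde g_\gamma^{\e_n}[t]\,\delta x_\gamma^{\e_n}(t), \delta x_\gamma^{\e_n}(t)\big\rangle_H
= \big\langle \partial_{xx}\tilde g_\gamma^{\e_n}[t]\,\tfrac{\delta x_\gamma^{\e_n}(t)}{\e_n}, \tfrac{\delta x_\gamma^{\e_n}(t)}{\e_n}\big\rangle_H,
\]
replace $\delta x_\gamma^{\e_n}/\e_n$ by $y_\gamma$ at the cost of an error controlled by $|\delta x_\gamma^{\e_n}/\e_n - y_\gamma|_{C_\dbF([0,T];L^2(\O;H))} \le C\e_n |\d u|^2 \to 0$ (using $|\partial_{xx}\tilde g_\gamma^{\e_n}[t]|_{\cL(H)}\le C_L$, Cauchy--Schwarz, and the uniform $L^2$-bound on $y_\gamma$ and on $\delta x_\gamma^{\e_n}/\e_n$), and then replace $\partial_{xx}\tilde g_\gamma^{\e_n}[t]$ by $\partial_{xx} g_\gamma[t]$, using that $\bar x_\gamma(t) + \theta \delta x_\gamma^{\e_n}(t)\to\bar x_\gamma(t)$ a.e. and the continuity of $\partial_{xx} g_\gamma$ in $(x,u)$, so $\partial_{xx}\tilde g_\gamma^{\e_n}[t] \to \partial_{xx} g_\gamma[t]$ a.e.\ with $\cL(H)$-norm bounded by $C_L$; dominated convergence (the integrand is dominated by $C_L |y_\gamma(t)|_H^2$, which is integrable over $[0,T]\times\O$ by Lemma \ref{maests}) finishes it, and the factor $\tfrac{\e_n^2}{2}$ in the second summand is absorbed the same way after dividing. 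Equations \eqref{dxyzape2} and \eqref{dxyzape3} are strictly easier: in \eqref{dxyzape2} only one factor of $\delta x_\gamma^{\e_n}/\e_n$ needs to be replaced by $y_\gamma$ and the other slot already carries the fixed $\d u(t)$, and in \eqref{dxyzape3} there is no $\delta x_\gamma^{\e_n}$ at all, so only the coefficient convergence $\partial_{uu}\tilde g_\gamma^{\e_n}[t]\to\partial_{uu} g_\gamma[t]$ a.e.\ plus domination by $C_L|\d u(t)|_{H_1}^2$ is used. Equation \eqref{dxyzape4} is the terminal-time analogue of \eqref{dxyzape1}, handled identically with $C_\dbF([0,T];L^2(\O;H))$ replaced by $L^2(\O;H)$ evaluated at $T$ and with $\partial_{xx}\tilde h_\gamma^{\e_n}$ in place of $\partial_{xx}\tilde g_\gamma^{\e_n}[t]$.

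The main obstacle is the extraction of a \emph{single} subsequence $\{\e_n\}$ that simultaneously makes \eqref{dxyzape} hold and makes $x_\gamma^{\e_n}\to\bar x_\gamma$ a.e.\ on $[0,T]\times\O$ (needed for \eqref{dxyzape1}--\eqref{dxyzape4}): convergence in $C_\dbF([0,T];L^2(\O;H))$ only gives a.e.\ convergence along subsequences, and the type-(a) error analysis for \eqref{dxyzape} itself requires a.e.\ convergence of $\delta x_\gamma^{\e_n}$ to $0$, so one must chain these extractions carefully and invoke a diagonal argument so that the \emph{same} $\{\e_n\}$ serves all the (finitely many) limit statements. A secondary technical point is keeping the bookkeeping of the Taylor remainders honest — making sure every discarded term is genuinely $o(\e_n^2)$ (resp.\ $o(\e_n)$ or $o(1)$) in the norm in which Lemma \ref{mtuee} is applied — but since the paper explicitly flags that these computations are ``somewhat established and standard'' and only a sketch is expected, the write-up can legitimately compress that bookkeeping and focus on the structure of the expansion and the two convergence mechanisms (norm estimates from Lemma \ref{maests} for the higher-order pieces, dominated convergence for the frozen-coefficient pieces).
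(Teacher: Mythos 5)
Your proposal follows essentially the same route as the paper: the remainder $\delta x_\gamma^{\e}-\e y_\gamma-\frac{\e^{2}}{2}z_\gamma$ (the paper works with its rescaling $\varUpsilon^{\e}=\e^{-2}\big(\delta x_\gamma^{\e}-\e y_\gamma-\frac{\e^{2}}{2}z_\gamma\big)$) is shown to solve a linear SEE with zero initial datum, Lemma \ref{mtuee} bounds it by the inhomogeneous source terms, and those sources are sent to zero by extracting a subsequence along which $x_\gamma^{\e_n}\to\bar x_\gamma$ a.e.\ and applying dominated convergence with the uniform bounds of (A3) and the estimates of Lemma \ref{maests}; the limits \eqref{dxyzape1}--\eqref{dxyzape4} are then obtained by the same substitution-plus-dominated-convergence mechanism, which is exactly the paper's (sketched) argument.

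One bookkeeping point must be corrected, since as written it would break the computation: because $\partial_{xx}\tilde{g}_\gamma^{\e}[t]$, $\partial_{xx}\tilde{a}_\gamma^{\e}[t]$, etc.\ carry the Taylor weight $(1-\theta)$ in their defining integrals, their a.e.\ limits along the subsequence are $\frac{1}{2}\partial_{xx}g_\gamma[t]$, $\frac{1}{2}\partial_{xx}a_\gamma[t]$, and not $\partial_{xx}g_\gamma[t]$, $\partial_{xx}a_\gamma[t]$ as you claim. Consequently your type-(a) differences in the equation for the remainder should be formed against $\frac{1}{2}\partial_{xx}a_\gamma[t](y_\gamma,y_\gamma)$ and $\frac{1}{2}\partial_{uu}a_\gamma[t](\d u,\d u)$, with the cross term appearing as $2\partial_{xu}\tilde{a}_\gamma^{\e}[t]\big(\delta x_\gamma^{\e}/\e,\d u\big)-\partial_{xu}a_\gamma[t](y_\gamma,\d u)$; this is precisely how the paper's $\Psi_a^{\e,\gamma}$ and $\Psi_b^{\e,\gamma}$ are written. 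Likewise, in \eqref{dxyzape1} the coefficient $\partial_{xx}\tilde{g}_\gamma^{\e_n}[t]$ must be replaced by $\frac{1}{2}\partial_{xx}g_\gamma[t]$ (and analogously in \eqref{dxyzape2}--\eqref{dxyzape4}); it is this factor, supplied by $\int_0^1(1-\theta)\,d\theta=\frac{1}{2}$, that makes the stated limits equal to zero. With that factor corrected, your argument coincides with the paper's.
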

\begin{proof}
Let $\varUpsilon^\e(\cdot) =\e^{-2}\big(\delta x_\gamma^{\e}(\cdot)-\e y_\gamma(\cdot)-\dfrac{\e^{2}}{2}z_\gamma(\cdot)\big)$. Then $\varUpsilon^\e(\cdot)$ fulfills 
\begin{equation*} 
\left\{
\begin{aligned}
&   d\varUpsilon^\e=\big[ A\varUpsilon^\e + \partial_x a_\gamma[t] \varUpsilon^\e + \Psi_{a}^{\e,\gamma }(t)  \big]dt + \big[\partial_x b_\gamma[t]\varUpsilon^\e + \Psi_{b}^{\e,\gamma }(t)\big]dW(t) \ \ \mbox{ in }(0,T],
\\
& \varUpsilon^\e(0)=0,
\end{aligned}
\right.
\end{equation*}
where 
\begin{align*}
&\Psi_{a}^{\e,\gamma }(t) = \! \big[ \partial_{xx}\tilde{a}_\gamma^{\e}[t]\big(\frac{\d x_\gamma^\e(t)}{\e},\frac{\d x_\gamma^\e(t)}{\e}\big) \! - \frac{1}{2}\partial_{xx}a_\gamma [t](y_\gamma(t),y_\gamma(t))\big]  \! +  \! \big(\partial_{uu}\tilde{a}_\gamma^{\e}[t]-\frac{1}{2}\partial_{uu}a_\gamma [t]\big)(\d u(t),\d u(t)) 
\\ &\indent  \indent \quad 
+ \big[2\partial_{xu}\tilde{a}_\gamma^{\e}[t]\big(\frac{\d x_\gamma^\e(t)}{\e},\d u(t)\big) -\partial_{xu}a_\gamma [t](y_\gamma(t),\d u(t))\big], 
\\ 
&\Psi_{b}^{\e,\gamma }(t) = \! \big[ \partial_{xx}\tilde{b}_\gamma^{\e}[t]\big(\frac{\d x_\gamma^\e(t)}{\e},\frac{\d x_\gamma^\e(t)}{\e}\big) \! - \frac{1}{2}\partial_{xx}b_\gamma [t](y_\gamma(t),y_\gamma(t))\big] + \big(\partial_{uu}\tilde{b}_\gamma^{\e}[t]-\frac{1}{2}\partial_{uu}b_\gamma [t]\big)(\d u(t),\d u(t)) 
\\ &\indent  \indent \quad 
\! + \!  \big[2\partial_{xu}\tilde{b}_\gamma^{\e}[t]\big(\frac{\d x_\gamma^\e(t)}{\e},\d u(t)\big) -\partial_{xu}b_\gamma [t](y_\gamma(t),\d u(t))\big]. 
\end{align*} 
Applying Lemma \ref{mtuee}, it follows that 
\begin{align}\label{thlyfk}
|\varUpsilon^\e|_{C_{\mathbb F}([0,T];L^{ 2}(\O;H))} \le C\big( |\Psi_{a}^{\e,\gamma }|_{L_{\mathbb F}^2(\O;L^1(0,T;H))} +|\Psi_{b}^{\e,\gamma }|_{L_{\mathbb F}^2(\O;L^2(0,T;H))}\big). 
\end{align}
Moreover, 
\[|\Psi_{a}^{\e,\gamma}|^2_{L_{\mathbb F}^2(\O;L^1(0,T;H))}= \dbE \big(\int_{0}^{T}|\Psi_{a}^{\e,\gamma}(t)|_H dt \big)^2 \le C \dbE  \int_{0}^{T}|\Psi_{a}^{\e,\gamma}(t)|^2_H dt. \]
Besides, by \eqref{ytrui}, there exists a  subsequence $\{\e_{n}\}_{n=1}^{\infty}$ such that $x^{\e_{n}}(\cdot)\to \bar x(\cdot)$ a.e.  $(t,\o)\in \O\times[0,T]$, as $n\to\infty$. Then by simple interpolation, Assumptions  (A1) and (A3), Lebesgue's  dominated convergence theorem, as well as the integrable of $y_\gamma , \d u$, 
\begin{align*}
&\lim_{n\to \infty} \dbE \int_{0}^{T}|\Psi_{a}^{\e_n,\gamma}(t)|^2_H dt 
\\
& \indent \le  C\lim_{n\to\infty}\dbE \int_0^T \Big[\Big|  \partial_{xx}\tilde  a_{\gamma }^{\e_{n}}[t]\big(\frac{\d  x{^{\e_{n}}}(t)}{\e_{n}},\frac{\d  x^{\e_{n}}(t)}{\e_{n}}\big) -  \partial_{xx} \tilde  a_{\gamma }^{\e_{n}}[t](y_{\gamma }(t),y_{\gamma }(t))\Big|_{H}^2
\\
&\indent\quad +\Big| \partial_{xx}\tilde  a_{\gamma }^{\e_{n}}[t] -  \frac{1}{2} \partial_{xx} a_{\gamma } [t]  \Big|_{\cL(H\times H,\;H)}^2\cdot|y_{\gamma }(t)|_{H}^4
\\
&\indent\quad +2\Big|\partial_{xu}\tilde  a_{\gamma }^{\e_{n}}[t] \big(\frac{\d  x^{\e_{n}}(t)}{\e_{n}},\d u(t)\big) - \partial_{xu}\tilde  a_{\gamma }^{\e_{n}}[t] (y_{\gamma }(t),\d u(t))\Big|_{H}^2
\\
&\indent\quad  +\big|2  \partial_{xu}\tilde  a_{\gamma }^{\e_{n}}[t]   - \partial_{xu} a_{\gamma } [t] \big|_{\cL(H\times H_{1},\;H)}^2\cdot|y_{\gamma }(t)|_{H}^2\cdot|\d u(t)|_{H_{1}}^2
\\
&\indent\quad  +\Big| \partial_{uu}\tilde  a_{\gamma }^{\e_{n}}[t] -\frac{1}{2} \partial_{uu} a_{\gamma } [t] \Big|_{\cL(H_{1}\times H_{1},\;H)}^2\cdot|\d u(t)|_{H_{1}}^4 \Big]dt
\\
&\indent =0.
\end{align*}
Similarly, we deduce that 
$\lim_{n\to \infty} |\Psi_{b}^{\e_n,\gamma }|_{L_{\mathbb F}^2(\O;L^2(0,T;H))}=0 $. 
Then recalling \eqref{thlyfk}, it derives that 
$
\lim_{n\to \infty}|\varUpsilon^{\e_n}|_{C_{\mathbb F}([0,T];L^{ 2}(\O;H))} =0$. 
The proof of \eqref{dxyzape} is completed. \eqref{dxyzape1}-\eqref{dxyzape4} can be proved similarly. 
\end{proof}

\section{Continuity dependence for state and variational processes}\label{sctlcytb}
Continuity dependence for state and variational equations as well as adjoint equations is essential in obtaining the measurable of the terms in the variational expansion of the cost functional, and in performing the weak convergence analysis to determine the optimal uncertainty reference measure. We present the corresponding results for state processes and variational processes in turn in this section, while that for the adjoint processes will be given in the next section. 
\begin{lemma}\label{yxclt}
    Under Assumptions (A1), (A3), and (A4), 
    \begin{align*}
        \lim_{\epsilon \to 0} \sup_{\mathsf{d}(\gamma ,\gamma')\le \epsilon }|x_\gamma-x_{\gamma'}|_{C_{\mathbb F}([0,T];L^p(\O;H))} =0.
    \end{align*}
\end{lemma}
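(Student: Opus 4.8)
The plan is to write down the equation solved by the difference $\xi:=x_\gamma-x_{\gamma'}$, use the well-posedness estimate \eqref{mtueer} of Lemma \ref{mtuee} to reduce matters to the size of the discrepancy between the $\gamma$- and $\gamma'$-coefficients evaluated along the single trajectory $x_{\gamma'}$, and then bound that discrepancy via Assumption (A4) after truncating the state.

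First, for $\gamma,\gamma'$ with $\mathsf{d}(\gamma,\gamma')\le\epsilon$, subtracting the two copies of \eqref{incso} shows that $\xi$ is the mild solution of
\begin{equation*}
\left\{
\begin{aligned}
&d\xi = \big[A\xi + \big(a_\gamma(t,x_\gamma,u)-a_\gamma(t,x_{\gamma'},u)\big) + \alpha(t)\big]dt + \big[\big(b_\gamma(t,x_\gamma,u)-b_\gamma(t,x_{\gamma'},u)\big) + \beta(t)\big]dW(t) \tin(0,T],
\\
&\xi(0)=0,
\end{aligned}
\right.
\end{equation*}
where $\alpha(t):=a_\gamma(t,x_{\gamma'}(t),u(t))-a_{\gamma'}(t,x_{\gamma'}(t),u(t))$ and $\beta(t):=b_\gamma(t,x_{\gamma'}(t),u(t))-b_{\gamma'}(t,x_{\gamma'}(t),u(t))$. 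The bracketed increments in $a_\gamma$ and $b_\gamma$ depend on $\xi$ in a globally Lipschitz fashion (constant $C_L$, by (A1)), so absorbing them into the coefficients of Lemma \ref{mtuee} and treating $\alpha,\beta$ as the free data ($f(\cdot,0)$ and $\widetilde f(\cdot,0)$ in that lemma's notation) --- which belong to the required spaces because $x_{\gamma'}\in C_\dbF([0,T];L^p(\O;H))$ and $u\in\cU^p[0,T]$ by the growth bounds of (A1) --- estimate \eqref{mtueer} yields
\begin{equation*}
|\xi|_{C_\dbF([0,T];L^p(\O;H))} \le C\big(|\alpha|_{L^p_\dbF(\O;L^1(0,T;H))} + |\beta|_{L^p_\dbF(\O;L^2(0,T;H))}\big),
\end{equation*}
with $C$ independent of $\gamma,\gamma'$.

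Next, fix $N>0$ and split according to the events $\{|x_{\gamma'}(t)|_H\le N\}$ and $\{|x_{\gamma'}(t)|_H>N\}$. On the former, Assumption (A4) gives $|\alpha(t)|_H+|\beta(t)|_H\le 2\rho_N(\mathsf{d}(\gamma,\gamma'))\le 2\rho_N(\epsilon)$; on the latter I use the linear growth bound $|\alpha(t)|_H+|\beta(t)|_H\le C(1+|x_{\gamma'}(t)|_H+|u(t)|_{H_1})$. Then, by Minkowski's integral inequality,
\begin{equation*}
|\alpha|_{L^p_\dbF(\O;L^1(0,T;H))} + |\beta|_{L^p_\dbF(\O;L^2(0,T;H))} \le C\rho_N(\epsilon) + C\,\mathcal{R}_N(\gamma'),
\end{equation*}
where $\mathcal{R}_N(\gamma')$ collects the contributions of $(1+|x_{\gamma'}(t)|_H+|u(t)|_{H_1})\mathbf{1}_{\{|x_{\gamma'}(t)|_H>N\}}$. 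The key fact is that $\sup_{\gamma'\in\Gamma}\mathcal{R}_N(\gamma')\to0$ as $N\to\infty$: since the bounds in (A1) do not depend on $\gamma$, Lemma \ref{mtuee} applied to \eqref{incso} bounds $|x_{\gamma'}|_{C_\dbF([0,T];L^q(\O;H))}$ uniformly in $\gamma'$, whence Chebyshev gives $\sup_{\gamma',t}\dbP(|x_{\gamma'}(t)|_H>N)\le CN^{-p}\to0$; this controls the constant term and the $|x_{\gamma'}|_H$ term (the latter using a moment slightly above $p$), while the $|u(t)|_{H_1}$ term is handled by the absolute continuity of the integral $\dbE\int_0^T|u(t)|_{H_1}^p\,dt<\infty$ over the sets $\{|x_{\gamma'}(t)|_H>N\}$, whose $dt\otimes d\dbP$-measure tends to $0$ uniformly in $\gamma'$.

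Finally, given $\delta>0$, first choose $N$ so large that $C\sup_{\gamma'}\mathcal{R}_N(\gamma')<\delta/2$, then, with $N$ fixed, choose $\epsilon_0>0$ so that $C\rho_N(\epsilon)<\delta/2$ whenever $0<\epsilon<\epsilon_0$ (possible since $\rho_N$ is a modulus of continuity); chaining the three displays above yields $\sup_{\mathsf{d}(\gamma,\gamma')\le\epsilon}|x_\gamma-x_{\gamma'}|_{C_\dbF([0,T];L^p(\O;H))}<\delta$ for $\epsilon\in(0,\epsilon_0)$, which is the claim. I expect the tail step to be the main obstacle: Assumption (A4) supplies a modulus of continuity only on bounded subsets of $H$, so one must truncate the state and then reabsorb the tail uniformly over the uncertainty parameter --- this is precisely where the $\gamma$-uniform a priori estimates on $x_{\gamma'}$ and the uniform-integrability argument are indispensable.
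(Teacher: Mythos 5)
Your proposal is correct and follows essentially the same route as the paper: write the SEE satisfied by $x_\gamma-x_{\gamma'}$, apply the well-posedness estimate of Lemma \ref{mtuee} to reduce to the coefficient discrepancies $(a_\gamma-a_{\gamma'})(t,x_{\gamma'},u)$ and $(b_\gamma-b_{\gamma'})(t,x_{\gamma'},u)$, and then truncate on $\{|x_{\gamma'}|_H\le N\}$ using the modulus of continuity from (A4) while killing the tail by integrability of $x_{\gamma'}$. Your treatment is in fact slightly more careful than the paper's on the tail step, where you make explicit the $\gamma'$-uniform moment bounds (and uniform integrability of $u$) needed so that the tail vanishes uniformly over the uncertainty parameter.
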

\begin{proof}
    It can be verified that $x_\gamma-x_{\gamma'}$ fulfills the following equation 
\begin{equation*} 
    \left\{
    \begin{aligned}
    & {d}(x_\gamma-x_{\gamma'}) =\big[ A (x_\gamma-x_{\gamma'})  +\Psi^{x,\gamma ,\gamma'}_a \big] {d}t
    +\Psi^{x,\gamma ,{\gamma'}}_b {d}W(t) \tin   t\in(0,T],       \\
    &(x_\gamma-x_{\gamma'})(0)= 0,
    \end{aligned}
    \right.
\end{equation*} 
where for $\varphi =a,b$, 
$\Psi^{x,\gamma ,\gamma'}_\varphi =\int_0^1 \partial_x \varphi_\gamma(t, x_{\gamma'}+\theta (x_\gamma-{x_{\gamma'}}), u(t)) d\theta (x_\gamma-{x_{\gamma'}}) +(\varphi_\gamma-{\varphi_{\gamma'}})(t,x_{\gamma'}(t),u(t))$.
By Lemma  \ref{mtuee}, it yields 
\begin{align}\label{cdzbxab}
|x_\gamma-x_{\gamma'}|_{C_{\mathbb F}([0,T];L^p(\O;H))} 
\le & C\big( |(a_\gamma-{a_{\gamma'}})(t,x_{\gamma'} ,u )|_{L_{\mathbb F}^p(\O;L^1(0,T;H))}  
    \\
& \quad  +|(b_\gamma-{b_{\gamma'}})(t,x_{\gamma'} ,u )|_{L_{\mathbb F}^p(\O;L^2(0,T;H))}\big). \notag
\end{align}
By the Assumptions (A1) and (A4), we deduce  
\begin{align}\label{njmgml}
 |(a_\gamma-{a_{\gamma'}})(t,x_{\gamma'} ,u )|_H   \le C \big(|(a_\gamma-{a_{\gamma'}})(t,x_{\gamma'} ,u )\chi_{\{|x_{\gamma'}|_H \le N\}}|_H  +(1+|x_{\gamma'}|_H) \chi_{\{|x_{\gamma'}|_H > N\}}\big)  
\end{align}
and 
\begin{align}\label{skelke}
    |(a_\gamma-{a_{\gamma'}})(t,x_{\gamma'} ,u )\chi_{\{|x_{\gamma'}|_H \le N\}}|_H \le \rho_N(\mathsf{d}(\gamma ,\gamma')).
\end{align}
Besides, due to $x_\gamma,x_{\gamma'}\in {C_{\mathbb F}([0,T];L^p(\O;H))}$, 
together with the subset relation \eqref{subssde},  
it can be derived that 
$\int_{0}^{T} (\mathbb E (1+|x_{\gamma'}|_H)^p)^{\frac{1}{p} } dt <\infty$, 
which further implies that 
\begin{align}\label{jtyuth}
    \lim_{N\to\infty}\int_{0}^{T} (\mathbb E ((1+|x_{\gamma'}|_H)^p  \chi_{\{|x_{\gamma'}|_H > N\}}) )^{\frac{1}{p} } dt =0.
\end{align}
Then 
\begin{align*}
   \lim_{\epsilon \to 0} \sup_{\mathsf{d}(\gamma ,\gamma')\le \epsilon }|(a_\gamma-{a_{\gamma'}})(t,x_{\gamma'} ,u )|_{L_{\mathbb F}^p(\O;L^1(0,T;H))} \! \!  
   & \!\! \overset{\eqref{subssde}\eqref{njmgml}\eqref{skelke}}{\le} \int_{0}^{T} (\mathbb E ((1+|x_{\gamma'}|_H)^p  \chi_{\{|x_{\gamma'}|_H > N\}}) )^{\frac{1}{p} } dt 
   \\
   &\ \   \overset{\eqref{jtyuth} }{\to}0, \quad  N\to\infty. 
\end{align*}
Similarly, it can be deduced that 
$    \lim_{\epsilon \to 0} \sup_{\mathsf{d}(\gamma ,\gamma')\le \epsilon }|(b_\gamma-{b_{\gamma'}})(t,x_{\gamma'} ,u )|_{L_{\mathbb F}^p(\O;L^2(0,T;H))} =0$.
Now the conclusion follows from \eqref{cdzbxab}. 
\end{proof}

\begin{lemma}\label{lcyyz}
    Under Assumptions (A1), (A3), and  (A4), 
    \begin{align*}
        \lim_{\epsilon \to 0} \sup_{\mathsf{d}(\gamma ,\gamma')\le \epsilon }|y_\gamma-y_{\gamma'}|_{C_{\mathbb F}([0,T];L^p(\O;H))} =0.
    \end{align*}
\end{lemma}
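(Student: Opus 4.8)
\pfs The plan is to run the same template as in the proof of Lemma~\ref{yxclt}: write down the linear, well-posed SEE solved by the difference $\eta_{\gamma,\gamma'}:=y_\gamma-y_{\gamma'}$, apply the a~priori estimate of Lemma~\ref{mtuee}, and then show the inhomogeneous terms tend to $0$ as $\mathsf{d}(\gamma,\gamma')\to0$. Subtracting the two copies of \eqref{lisef} and grouping so that the bounded coefficients $\partial_x a_\gamma[t]$, $\partial_x b_\gamma[t]$ multiply $\eta_{\gamma,\gamma'}$ itself, one checks that $\eta_{\gamma,\gamma'}$ solves
\begin{equation*}
\left\{
\begin{aligned}
& d\eta_{\gamma,\gamma'} = \big[A\eta_{\gamma,\gamma'} + \partial_x a_\gamma[t]\eta_{\gamma,\gamma'} + \Psi_a^{y,\gamma,\gamma'}\big]dt + \big[\partial_x b_\gamma[t]\eta_{\gamma,\gamma'} + \Psi_b^{y,\gamma,\gamma'}\big]dW(t) \quad \mbox{ in }(0,T],
\\
& \eta_{\gamma,\gamma'}(0)=0,
\end{aligned}
\right.
\end{equation*}
where, for $\varphi=a,b$,
\begin{equation*}
\Psi_\varphi^{y,\gamma,\gamma'}(t) = \big(\partial_x\varphi_\gamma[t]-\partial_x\varphi_{\gamma'}[t]\big)y_{\gamma'}(t) + \big(\partial_u\varphi_\gamma[t]-\partial_u\varphi_{\gamma'}[t]\big)\d u(t).
\end{equation*}
Since (A3) gives $|\partial_x a_\gamma[t]|_{\cL(H)},|\partial_x b_\gamma[t]|_{\cL(H)}\le C_L$, the coefficients satisfy Condition~\ref{eqmarorllccl}, so Lemma~\ref{mtuee} yields
\begin{equation*}
|y_\gamma-y_{\gamma'}|_{C_{\mathbb F}([0,T];L^p(\O;H))} \le C\big(|\Psi_a^{y,\gamma,\gamma'}|_{L_{\mathbb F}^p(\O;L^1(0,T;H))} + |\Psi_b^{y,\gamma,\gamma'}|_{L_{\mathbb F}^p(\O;L^2(0,T;H))}\big),
\end{equation*}
and the whole problem reduces to making the two right-hand norms small, uniformly over pairs with $\mathsf{d}(\gamma,\gamma')\le\epsilon$.

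To control $\Psi_\varphi^{y,\gamma,\gamma'}$ I would decouple the change of parameter from the change of evaluation point by inserting an intermediate term, e.g.
\begin{equation*}
\partial_x a_\gamma[t]-\partial_x a_{\gamma'}[t] = \big[\partial_x a_\gamma-\partial_x a_{\gamma'}\big](t,\bar x_\gamma(t),\bar u(t)) + \big[\partial_x a_{\gamma'}(t,\bar x_\gamma(t),\bar u(t))-\partial_x a_{\gamma'}(t,\bar x_{\gamma'}(t),\bar u(t))\big].
\end{equation*}
On $\{|\bar x_\gamma(t)|_H\le N\}$ the first bracket is $\le\rho_N(\mathsf{d}(\gamma,\gamma'))$ by (A4), and everywhere it is $\le 2C_L$ by (A3); the second bracket is $\le C_L|\bar x_\gamma(t)-\bar x_{\gamma'}(t)|_H$ by (A3) (boundedness of the second-order derivatives), and the analogous estimates hold for $\partial_u a,\partial_x b,\partial_u b$. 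Writing $w_{\gamma'}(t):=|y_{\gamma'}(t)|_H+|\d u(t)|_{H_1}$, this gives
\begin{equation*}
|\Psi_\varphi^{y,\gamma,\gamma'}(t)|_H \le C\big[\rho_N(\mathsf{d}(\gamma,\gamma')) + |\bar x_\gamma(t)-\bar x_{\gamma'}(t)|_H + \chi_{\{|\bar x_\gamma(t)|_H>N\}}\big]w_{\gamma'}(t).
\end{equation*}
Then, passing through the embeddings \eqref{subssde} to work in $L_{\mathbb F}^1(0,T;L^p(\O;\cdot))$, using H\"older in $\O$, the higher-exponent estimate of Lemma~\ref{maests} to bound $|y_{\gamma'}|$ uniformly in $\gamma'$, and the uniform-in-$\gamma$ moment bound $\sup_\gamma\sup_t\dbE|\bar x_\gamma(t)|_H^p<\infty$ (from Lemma~\ref{mtuee} and (A1), which also controls $\dbP(|\bar x_\gamma(t)|_H>N)$ via Chebyshev), I would estimate the three pieces separately: the $\rho_N$-piece is $\le C\rho_N(\mathsf{d}(\gamma,\gamma'))\to0$ for fixed $N$; the $\chi_{\{|\bar x_\gamma|>N\}}$-piece is bounded, uniformly in $\gamma,\gamma'$, by something that vanishes as $N\to\infty$; and the $|\bar x_\gamma-\bar x_{\gamma'}|_H$-piece is $\le C|\bar x_\gamma-\bar x_{\gamma'}|_{C_{\mathbb F}([0,T];L^{2p}(\O;H))}\to0$ by Lemma~\ref{yxclt}. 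Choosing $N$ large first and then $\mathsf{d}(\gamma,\gamma')$ small finishes the bound on $\Psi_a^{y,\gamma,\gamma'}$; the treatment of $\Psi_b^{y,\gamma,\gamma'}$ in the $L^2$-in-time norm is word-for-word the same.

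I expect the main obstacle to be precisely this middle step, not any single inequality. Assumption (A4) supplies only a \emph{local} modulus of continuity, so the coefficient increment $\partial_x a_\gamma[t]-\partial_x a_{\gamma'}[t]$ must be split into a genuine parameter increment (to which (A4) applies only after truncation at level $N$) and a state increment (to which (A3) applies), which forces a tail term and makes the conclusion rest on Lemma~\ref{yxclt}; moreover, since $y_{\gamma'}$ (hence $\d u$) multiplies this increment, all estimates have to be pushed through H\"older, so higher-moment versions of Lemmas~\ref{maests} and \ref{yxclt} are needed, and every bound must be kept uniform in $\gamma$ (indeed over pairs $\gamma,\gamma'$ with $\mathsf{d}(\gamma,\gamma')\le\epsilon$) rather than merely for a single fixed $\gamma$.
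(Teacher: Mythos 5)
Your proposal is correct and takes essentially the same route as the paper's proof: write the SEE for $y_\gamma-y_{\gamma'}$, apply Lemma~\ref{mtuee}, split the coefficient increments into a parameter increment (handled by (A4) after truncation at level $N$, with the tail controlled by uniform moment bounds) and a state increment (handled by the bounded second derivatives in (A3) together with Lemma~\ref{yxclt}), and close with H\"older estimates using higher-moment bounds on $y_{\gamma'}$, $\bar x_\gamma$ and the limits in $N$ and $\mathsf{d}(\gamma,\gamma')$. The only cosmetic difference is that you bound the state increment by the Lipschitz consequence of (A3), whereas the paper keeps the exact $\partial_{xx}$, $\partial_{xu}$ integral remainders; this changes nothing in the argument.
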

\begin{proof}
    The process $y_\gamma-y_{\gamma'}$ fulfills the following equation: 
\begin{equation*}
    \left\{
\begin{aligned}
    &d(y_\gamma-y_{\gamma'}) = \big[A(y_\gamma-y_{\gamma'}) + \partial_x a_\gamma(t, x_{\gamma},u)(y_\gamma-y_{\gamma'})  + \Psi^{y,\gamma ,\gamma'}_a \big]dt   \\
    &\indent\indent\indent\indent + \big[\partial_x b_\gamma(t, x_{\gamma},u)(y_\gamma-y_{\gamma'}) + \Psi^{y,\gamma ,{\gamma'}}_b \big] dW(t) \quad  \mbox{ in }(0,T],  
    \\
    &(y_\gamma-y_{\gamma'})(0)=0, 
\end{aligned}\right.
\end{equation*}
where for $\varphi =a,b$, 
\begin{align*} 
\Psi^{y,\gamma ,\gamma'}_\varphi :=   &
\int_0^1 \partial_{xx} \varphi_\gamma(t, x_{\gamma'}+\theta (x_\gamma-{x_{\gamma'}}), u ) d\theta (x_\gamma-{x_{\gamma'}},y_{\gamma'})  
+ (\partial_x \varphi_\gamma- \partial_x \varphi_{\gamma'})(t, x_{\gamma'},u) y_{\gamma'}
\\ 
& +\int_0^1 \partial_{xu} \varphi_\gamma(t, x_{\gamma'}+\theta (x_\gamma-{x_{\gamma'}}), u ) d\theta (x_\gamma-{x_{\gamma'}},\d  u)
+ (\partial_u \varphi_\gamma- \partial_u \varphi_{\gamma'})(t, x_{\gamma'},u) \d  u . 
\end{align*}
Then applying Lemma \ref{mtuee} deduces that 
\begin{align} \label{cdypz}
    |y_\gamma-y_{\gamma'}|_{C_{\mathbb F}([0,T];L^{ p}(\O;H))} \le C\big( |\Psi^{y,\gamma ,\gamma'}_a|_{L_{\mathbb F}^p(\O;L^1(0,T;H))} +|\Psi^{y,\gamma ,\gamma'}_b|_{L_{\mathbb F}^p(\O;L^2(0,T;H))}\big). 
\end{align}
From (A3), we get 
$  {|\int_0^1 \partial_{xx} \varphi_\gamma(t, x_{\gamma'}+\theta (x_\gamma-{x_{\gamma'}}), u ) d\theta (x_\gamma-{x_{\gamma'}},y_{\gamma'}) |}_H   \le  C|x_\gamma-{x_{\gamma'}}|_H|y_{\gamma'}|_H$, 
and, recalling \eqref{subssde}, 
$ (\dbE  (\int_{0}^{T} |x_\gamma-{x_{\gamma'}}|_H|y_{\gamma'}|_H dt )^p )^{\frac{1}{p} }
\le C {|x_\gamma-{x_{\gamma'}}|}_{C_{\mathbb F}([0,T];L^{2 p}(\O;H))}  {|y_{\gamma'}|}_{C_{\mathbb F}([0,T];L^{2 p}(\O;H))} $. 
Besides, 
\begin{align*}
&\big(\dbE \big(\int_{0}^{T} |(\partial_x \varphi_\gamma- \partial_x \varphi_{\gamma'})(t, x_{\gamma'},u) |_{\cal L(H)} |y_{\gamma'}|_H dt\big)^p\big)^{\frac{1}{p} }  
\\
& \quad \le C |y_{\gamma'}|_{C_{\mathbb F}([0,T];L^{2 p}(\O;H))} \int_{0}^{T}  \big(\dbE \big|(\partial_x \varphi_\gamma- \partial_x \varphi_{\gamma'})(t, x_{\gamma'},u)\big(\chi_{\{|x_{\gamma'}|_H> N\}}+\chi_{\{|x_{\gamma'}|_H\le N\}}\big) \big|^{2p}_{\cal L(H)}\big)^{\frac{1}{2p} }dt. 
\end{align*}
By Assumptions (A1) and (A4), for any $N>0$, it can be derived that 
\begin{align*}
\int_{0}^{T}  \big(\dbE |(\partial_x \varphi_\gamma- \partial_x \varphi_{\gamma'})(t, x_{\gamma'},u) \chi_{\{|x_{\gamma'}|_H\le N\}} |^{2p}_{\cal L(H)} \big)^{\frac{1}{2p} }dt & \le C \rho_N(\mathsf{d}(\gamma ,\gamma')), 
\\
\int_{0}^{T} \! \big(\dbE |(\partial_x \varphi_\gamma- \partial_x \varphi_{\gamma'})(t, x_{\gamma'},u) \chi_{\{|x_{\gamma'}|_H> N\}} |^{2p}_{\cal L(H)} \big)^{\frac{1}{2p} }dt & 
\le  C \! \int_{0}^{T} \! \big(\dbE \big((1+|x_{\gamma'}|_H)^{2p}\chi_{\{|x_{\gamma'}|_H> N\}}\big)\big)^{\frac{1}{2p} } dt . 
\end{align*} 
Taking $\underset{\epsilon \to 0}{\lim} \underset{\mathsf{d}(\gamma ,\gamma')\le \epsilon }{\sup}$ and then $N\to\infty$, together with $|{x_{\gamma'}}|_{C_{\mathbb F}([0,T];L^{2 p}(\O;H))} <\infty$, we derive 
\begin{align*}
\lim_{\epsilon \to 0} \sup_{\mathsf{d}(\gamma ,\gamma')\le \epsilon }\big|&\int_0^1 \partial_{xx} \varphi_\gamma(t, x_{\gamma'}  +\theta (x_\gamma-{x_{\gamma'}}), u ) d\theta (x_\gamma-{x_{\gamma'}},y_{\gamma'})  
\\
&
+ (\partial_x \varphi_\gamma- \partial_x \varphi_{\gamma'})(t, x_{\gamma'},u) y_{\gamma'}\big|_{L_{\mathbb F}^p(\O;L^1(0,T;H))} =0.
\end{align*}
Similarly, it holds that 
\begin{align*}
\lim_{\epsilon \to 0} \sup_{\mathsf{d}(\gamma ,\gamma')\le \epsilon }\big| &\int_0^1 \partial_{xu} \varphi_\gamma(t, x_{\gamma'}   +\theta (x_\gamma-{x_{\gamma'}}), u ) d\theta (x_\gamma-{x_{\gamma'}},\d  u)
\\
&  + (\partial_u \varphi_\gamma- \partial_u \varphi_{\gamma'})(t, x_{\gamma'},u) \d  u \big|_{L_{\mathbb F}^p(\O;L^1(0,T;H))} =0.
\end{align*}
Then we obtain 
$
\lim_{\epsilon \to 0} \sup_{\mathsf{d}(\gamma ,\gamma')\le \epsilon } | \Psi^{y,\gamma ,\gamma'}_a  |_{L_{\mathbb F}^p(\O;L^1(0,T;H))} =0. 
$
Moreover, by similar analysis, we have 
$
\lim_{\epsilon \to 0} \sup_{\mathsf{d}(\gamma ,\gamma')\le \epsilon }  |\Psi^{y,\gamma ,\gamma'}_b |_{L_{\mathbb F}^p(\O;L^2(0,T;H))} =0.
$
Now the conclusion follows from \eqref{cdypz}. 
\end{proof}

\section{Properties for adjoint equations}\label{stlpsez}
Consider the following  $H$-valued backward SEEs:
\begin{equation}\label{apHm}
\left\{
\begin{aligned} 
&d{\mathrm p}(t) = - [ A^* {\mathrm p}(t) - f(t,{\mathrm p}(t),{\mathrm q}(t))]dt + {\mathrm q}(t) dW(t) &\mbox{ in }[0,T),\\
& {\mathrm p}(T) = {\mathrm p}_T.
\end{aligned}
\right.
\end{equation}
Here, $f:[0,T]\times  \O \times H\times H  \to H$,  ${\mathrm p}_T \in L_{\cal F_T}^{p}( \O;H)$,  $p\in(1,2]$. 

In order to introduce the  concept of the transposition solution to \eqref{apHm} and its well-posedness, firstly we should introduce the following SEE 
\begin{equation}\label{apHmt}
\left\{
\begin{aligned} 
& d\f = [A\f+ v_1]ds +  v_2 dW(s) \quad  \mbox{ in }(t,T],
\\
& \f(t)=\eta,
\end{aligned}
\right.
\end{equation}
where $t\in[0,T]$, $\eta\in L^{q}_{\cF_t}(\O;H)$, $v_1\in L^1_{\dbF}(t,T;L^{q}(\O;H))$, $v_2\in L^q_{\dbF}(t,T;H)$ with $p\in(1,2]$, $\frac{1}{p} +\frac{1}{q} =1$.

\begin{definition}\label{definition1}
A pair of processes $(\rp(\cdot), \rq(\cdot)) \in  D_{\dbF}([0,T];L^{p}(\O;H)) \times  L^p_{\dbF}(0,T;H)$ are defined to be transposition solutions to Eq. \eqref{apHm}, if 
\begin{align*}
   & \dbE \big\langle \f(T),\rp_T\big\rangle_{H}
    - \dbE\int_t^T \big\langle \f(s),f(s,\rp(s),\rq(s) )\big\rangle_Hds\\
    &\indent 
 = \dbE \big\langle\eta,\rp(t)\big\rangle_H
    + \dbE\int_t^T \big\langle
    v_1(s),\rp(s)\big\rangle_H ds + \dbE\int_t^T
    \big\langle v_2(s),\rq(s)\big\rangle_H ds,
\end{align*}
where $t\in [0,T]$,  $\eta\in L^{q}_{\cF_t}(\O;H)$,   $v_1(\cdot)\in L^1_{\dbF}(t,T;L^{q}(\O;H))$,  $v_2(\cdot)\in L^q_{\dbF}(t,T; H)$, and the process $\f\in C_{\dbF}([t,T];L^{q}(\O;H))$ is the solution to \eqref{apHmt}. 
\end{definition}

\begin{condition}\label{iclb}
The map $f:[0,T]\times  \O \times H\times H  \to H$ in \eqref{apHm} satisfies 

(i) For any given $h_1,h_2\in H$, $f(\cdot,h_1,h_2)$ is $\mathbf F$-adapted;
    
(ii) There exist two nonnegative functions $L_1(\cdot)\in L^1(0,T), L_2(\cdot)\in L^2(0,T)$, such that for any given $h_1,h_2,\tilde h_1,\tilde h_2\in H $ and a.e. $t\in[0,T]$, 
\begin{equation}\label{iclbe}
\begin{aligned}
    |f(t,h_1,h_2)-f(t,\tilde h_1,\tilde h_2)|_H\leq  L_1( t)|h_1-\tilde h_1|_H+L_2( t)|h_2-\tilde h_2|_H  
\end{aligned}
        \indent  \dbP\mbox{-}a.s. 
\end{equation}
\end{condition}

\begin{lemma}\textup{\cite[Theorem 4.19]{luqzhx21}}\label{lcdpq}
    Let Condition \ref{iclb} holds. Then for $f(\cdot ,0,0) \in
    L^{1}_{\dbF} (0,T;L^{p}(\O;H))$, ${\mathrm p}_T \in L_{\cal F_T}^{p}( \O;H)$, there exists a unique transposition solution  $(\rp(\cdot), \!  \rq(\cdot))  \! \in  \!   D_{\dbF}([0,T];  \! L^{p} (\O;H)) \times  L^p_{\dbF} (0,T; \!  H )$ to \eqref{apHm}. 
    Moreover, 
\begin{align}
 & |(\rp(\cdot), \rq(\cdot)|_{D_{\dbF}([0,T]; L^{p} (\O;H)) \times  L^p_{\dbF} (0,T; H)}   \le C(|{\mathrm p}_T|_{L_{\cal F_T}^{p}( \O;H)} + |f(\cdot ,0,0)|_{L^{1}_{\dbF} (0,T;L^{p}(\O;H))}). 
\end{align}
\end{lemma}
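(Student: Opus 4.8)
Since the statement is quoted from \cite[Theorem 4.19]{luqzhx21}, I only outline the transposition argument. The plan is to proceed in three stages: the linear inhomogeneous case, then the semilinear case by a contraction/iteration, and finally uniqueness together with the a priori bound.

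\textbf{Stage 1 (linear case).} Suppose first $f(t,h_1,h_2)\equiv F(t)$ with $F\in L^1_{\dbF}(0,T;L^p(\O;H))$. For fixed $t$ and data $(\eta,v_1,v_2)$ as in \eqref{apHmt}, let $\f$ be the mild solution of \eqref{apHmt}; by Lemma \ref{mtuee} (applied with exponent $q\in[2,\infty)$) and the inclusions in \eqref{subssde} it exists, lies in $C_{\dbF}([t,T];L^q(\O;H))$, and obeys $|\f|_{C_{\dbF}([t,T];L^q(\O;H))}\le C\big(|\eta|_{L^q_{\cF_t}(\O;H)}+|v_1|_{L^1_{\dbF}(t,T;L^q(\O;H))}+|v_2|_{L^q_{\dbF}(t,T;H)}\big)$. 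Define the functional $\ell_t(\eta,v_1,v_2):=\dbE\langle\f(T),\rp_T\rangle_H-\dbE\int_t^T\langle\f(s),F(s)\rangle_H\,ds$; by this estimate and Hölder's inequality with $1/p+1/q=1$ it is a bounded linear functional on $L^q_{\cF_t}(\O;H)\times L^1_{\dbF}(t,T;L^q(\O;H))\times L^q_{\dbF}(t,T;H)$. Since $H$, hence $L^q(\O;H)$ with $q\in[2,\infty)$, is reflexive and so has the Radon--Nikodym property, the dual of $L^1_{\dbF}(t,T;L^q(\O;H))$ is $L^\infty_{\dbF}(t,T;L^p(\O;H))$ and that of $L^q_{\dbF}(t,T;H)$ is $L^p_{\dbF}(t,T;H)$; the Riesz representation theorem then yields $\rp(t)\in L^p_{\cF_t}(\O;H)$, $\rp(\cdot)\in L^\infty_{\dbF}(t,T;L^p(\O;H))$ and $\rq(\cdot)\in L^p_{\dbF}(t,T;H)$ representing $\ell_t$, i.e.\ satisfying the identity of Definition \ref{definition1}. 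Varying $t$ and using uniqueness of the forward flow \eqref{apHmt} shows these representations are mutually consistent and glue to a single pair, and a martingale/Doob-type argument upgrades $\rp$ from $L^\infty_{\dbF}$ to $D_{\dbF}([0,T];L^p(\O;H))$; the bound $|(\rp,\rq)|\le C(|\rp_T|+|F|)$ is precisely the operator norm of $\ell_0$.

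\textbf{Stage 2 (semilinear case).} Define $\Phi$ on $D_{\dbF}([0,T];L^p(\O;H))\times L^p_{\dbF}(0,T;H)$ by letting $\Phi(\rp,\rq)=(\tilde\rp,\tilde\rq)$ be the Stage-1 solution with inhomogeneity $F(t):=f(t,\rp(t),\rq(t))$; this is legitimate since Condition \ref{iclb} gives $f(\cdot,\rp,\rq)=f(\cdot,0,0)+[f(\cdot,\rp,\rq)-f(\cdot,0,0)]$ with the bracket dominated by $L_1(\cdot)|\rp|_H+L_2(\cdot)|\rq|_H$, so $F\in L^1_{\dbF}(0,T;L^p(\O;H))$. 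For two inputs, $\Phi(\rp_1,\rq_1)-\Phi(\rp_2,\rq_2)$ is the Stage-1 solution with inhomogeneity $f(\cdot,\rp_1,\rq_1)-f(\cdot,\rp_2,\rq_2)$, so by the linear estimate and \eqref{iclbe}
\[
\big|\Phi(\rp_1,\rq_1)-\Phi(\rp_2,\rq_2)\big|_{[T-\delta,T]}\le C\Big(\int_{T-\delta}^{T}L_1(s)\,ds+\big(\textstyle\int_{T-\delta}^{T}L_2(s)^2\,ds\big)^{1/2}\Big)\big|(\rp_1,\rq_1)-(\rp_2,\rq_2)\big|_{[T-\delta,T]},
\]
where $|\cdot|_{[T-\delta,T]}$ is the natural norm on $D_{\dbF}([T-\delta,T];L^p(\O;H))\times L^p_{\dbF}(T-\delta,T;H)$. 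As $L_1\in L^1(0,T)$ and $L_2\in L^2(0,T)$, the prefactor is $<1$ for $\delta$ small (and $\delta$ may be chosen independent of the starting time), so $\Phi$ is a contraction on $[T-\delta,T]$; the Banach fixed point theorem gives a unique solution there, and finitely many backward steps over $[T-2\delta,T-\delta],[T-3\delta,T-2\delta],\dots$ produce a transposition solution on $[0,T]$.

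\textbf{Stage 3 (uniqueness and estimate).} Uniqueness follows interval by interval from the contraction estimate of Stage 2 (two transposition solutions are fixed points of $\Phi$ on each short interval, hence coincide, and one patches backward). For the global bound, apply the Stage-1 estimate with $F=f(\cdot,\rp,\rq)$, use $|F|_{L^1_{\dbF}(0,T;L^p(\O;H))}\le|f(\cdot,0,0)|_{L^1_{\dbF}(0,T;L^p(\O;H))}+C\big(|L_1|_{L^1(0,T)}+|L_2|_{L^2(0,T)}\big)|(\rp,\rq)|$, and absorb the $|(\rp,\rq)|$-terms into the left-hand side over intervals of length $\delta$, then sum. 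I expect Stage 1 to be the main obstacle: establishing genuine $D_{\dbF}$ time-regularity of $\rp$ (not merely $L^\infty_{\dbF}$) and the mutual consistency of the interval-wise Riesz representations is delicate because, unlike the finite-dimensional case, $A^*$ is unbounded and there is no direct backward stochastic integral representation to fall back on; the reflexivity input making $\big(L^1_{\dbF}(t,T;L^q(\O;H))\big)^*=L^\infty_{\dbF}(t,T;L^p(\O;H))$ (valid here thanks to $q\in[2,\infty)$) is what makes the duality construction go through. The semilinear iteration in Stage 2 is, by contrast, routine once the linear theory and the $L^1$/$L^2$ integrability of $L_1,L_2$ are in hand.
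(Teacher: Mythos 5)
The paper itself gives no proof of this lemma --- it is quoted verbatim from \cite[Theorem 4.19]{luqzhx21} --- and your outline follows the same route as the proof in that source: solve the linear problem by duality with the forward test equation \eqref{apHmt} via a Riesz-type representation (using the Radon--Nikodym property of $L^{p}(\O;H)$ and a projection argument to get adapted representatives), then treat the Lipschitz nonlinearity by a Banach fixed point on small time intervals, and read off uniqueness and the a priori bound from the linear estimate. Structurally, this is the standard argument, and the delicate points you flag yourself (adaptedness in identifying the dual of $L^1_{\dbF}(t,T;L^{q}(\O;H))$, consistency of the interval-wise representations, and upgrading $\rp$ to $D_{\dbF}([0,T];L^{p}(\O;H))$) are indeed where the real work lies, though in your sketch they are asserted rather than carried out.

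There is, however, one step that does not close in the spaces and hypotheses as stated. In Stage 2 you need $f(\cdot,\rp,\rq)\in L^1_{\dbF}(0,T;L^{p}(\O;H))$ and you claim a contraction constant of order $\big(\int_{T-\delta}^{T}L_2(s)^2\,ds\big)^{1/2}$. But with the second component measured in $L^p_{\dbF}(0,T;H)$, i.e.\ $\dbE\int_0^T|\rq|_H^p\,dt<\infty$, the function $s\mapsto(\dbE|\rq(s)|_H^p)^{1/p}$ is only in $L^p(0,T)$, and H\"older against $L_2\in L^2(0,T)$ requires $L_2\in L^{q}(0,T)$ with $q=p/(p-1)\ge 2$; so for $p<2$ the term $\int L_2(s)(\dbE|\rq(s)|_H^p)^{1/p}ds$ is controlled by $|L_2|_{L^2}$ times the $L^2(T-\delta,T;L^p(\O;H))$-norm of $\rq$, which is \emph{not} dominated by the stated solution norm --- indeed the map $\Phi$ need not even be well defined on the stated space. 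The estimate closes if $p=2$, if $L_2$ is bounded (constant Lipschitz coefficient), or if the second component and the test diffusion $v_2$ are taken in mixed-norm spaces with $L^2$-integrability in time (e.g.\ $\rq\in L^2_{\dbF}(0,T;L^p(\O;H))$ or $L^p_{\dbF}(\O;L^2(0,T;H))$), where Minkowski's inequality for $p\le 2$ makes the H\"older step work; this is the framework in which the $L^1$/$L^2$ time-dependent Lipschitz coefficients of Condition \ref{iclb} are naturally compatible with the duality construction. You should either run the contraction in such a norm and only afterwards relate it to the stated space, or strengthen the integrability assumed for $L_2$; as written, Stage 2 has a genuine gap for $p\in(1,2)$.
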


Now we take 
\begin{equation*}\label{zv1}
    \left\{
\begin{aligned} 
& \rp_T =  -\partial_x h_\gamma ( \bar{x}_\gamma(T) ),
\\ 
& f(t,{\rp}(t),{\rq}(t))=  - \partial_x a_\gamma (t,\bar{x}_\gamma (t),\bar{u}(t) )^*\rp(t)-\partial_x b_\gamma (t,\bar{x}_\gamma (t),\bar{u}(t))^*\rq(t)   +\partial_x g_\gamma (t,\bar{x}_\gamma,\bar{u}) 
\end{aligned}
    \right.
\end{equation*}
in \eqref{apHm}, then it becomes 
\begin{equation}\label{yjdbjx}
\left\{
    \begin{aligned}
    d\rp_\gamma(t)&=-A^*\rp_\gamma(t)dt-\big (\partial_x a_\gamma (t,\bar{x}_\gamma (t),\bar{u}(t) )^*\rp_\gamma(t)+\partial_x b_\gamma (t,\bar{x}_\gamma (t),\bar{u}(t))^*\rq_\gamma(t)   \\
     &\quad\ -\partial_x g_\gamma (t,\bar{x}_\gamma,\bar{u})\big)dt+\rq_\gamma(t)dW(t) \tin  t\in[0,T), \\
    \rp_\gamma(T)&=-\partial_x h_\gamma ( \bar{x}_\gamma(T) ).
    \end{aligned}\right.
\end{equation}
In the following lemma, we present the continuous dependence of solution $(\rp_\gamma, \rq_\gamma)$ to \eqref{yjdbjx} with respect to the uncertainty parameter $\gamma \in \Gamma $. 
\begin{lemma}\label{lllypqg}
    Under Assumptions (A1)-(A4), it holds  
\begin{align}\label{cyxpq}
\lim_{\epsilon \to 0} \sup_{\mathsf{d}(\gamma ,\gamma')\le \epsilon }  |(\rp_\gamma-\rp_{\gamma'}, \rq_\gamma-\rq_{\gamma'})|_{D_{\dbF}([0,T]; L^{p} (\O;H)) \times  L^p_{\dbF} (0,T; H)} = 0.
\end{align}
\end{lemma}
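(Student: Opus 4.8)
The plan is to reproduce, for the backward equation, the scheme already used in Lemmas \ref{yxclt} and \ref{lcyyz}. Subtracting the two copies of \eqref{yjdbjx} corresponding to $\gamma$ and to $\gamma'$, one verifies that $(\rp_\gamma-\rp_{\gamma'},\,\rq_\gamma-\rq_{\gamma'})$ solves a backward SEE of the form \eqref{apHm} with terminal datum $-\partial_x h_\gamma(\bar x_\gamma(T))+\partial_x h_{\gamma'}(\bar x_{\gamma'}(T))$, with ``linear part'' $-\partial_x a_\gamma(t,\bar x_\gamma,\bar u)^*(\rp_\gamma-\rp_{\gamma'})-\partial_x b_\gamma(t,\bar x_\gamma,\bar u)^*(\rq_\gamma-\rq_{\gamma'})$ (which satisfies Condition \ref{iclb} with $L_1=L_2=C_L$ by (A3)), and with inhomogeneous term
\begin{align*}
F_{\gamma,\gamma'}(t)={}&-\big(\partial_x a_\gamma(t,\bar x_\gamma,\bar u)^*-\partial_x a_{\gamma'}(t,\bar x_{\gamma'},\bar u)^*\big)\rp_{\gamma'}\\
&-\big(\partial_x b_\gamma(t,\bar x_\gamma,\bar u)^*-\partial_x b_{\gamma'}(t,\bar x_{\gamma'},\bar u)^*\big)\rq_{\gamma'}\\
&+\big(\partial_x g_\gamma(t,\bar x_\gamma,\bar u)-\partial_x g_{\gamma'}(t,\bar x_{\gamma'},\bar u)\big).
\end{align*}
Applying the a priori bound of Lemma \ref{lcdpq} to this difference equation, \eqref{cyxpq} reduces to proving that, after $\sup_{\mathsf{d}(\gamma,\gamma')\le\epsilon}$ and $\epsilon\to0$, both $|\partial_x h_\gamma(\bar x_\gamma(T))-\partial_x h_{\gamma'}(\bar x_{\gamma'}(T))|_{L^p_{\cF_T}(\O;H)}$ and $|F_{\gamma,\gamma'}|_{L^1_{\dbF}(0,T;L^p(\O;H))}$ vanish.

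Each coefficient difference I would split into a \emph{state-perturbation} part and a \emph{parameter-perturbation} part, e.g.\ $\partial_x a_\gamma(t,\bar x_\gamma,\bar u)^*-\partial_x a_{\gamma'}(t,\bar x_{\gamma'},\bar u)^*=\big[\partial_x a_\gamma(t,\bar x_\gamma,\bar u)^*-\partial_x a_\gamma(t,\bar x_{\gamma'},\bar u)^*\big]+\big[\partial_x a_\gamma(t,\bar x_{\gamma'},\bar u)^*-\partial_x a_{\gamma'}(t,\bar x_{\gamma'},\bar u)^*\big]$. The first bracket is bounded in $\cL(H)$ by $C_L|\bar x_\gamma-\bar x_{\gamma'}|_H$ thanks to (A3); since Lemma \ref{mtuee} bounds $\bar x_\gamma$ in $C_{\dbF}([0,T];L^r(\O;H))$ uniformly in $\gamma$ (for $r$ up to the integrability of $\bar u$, using (A1)), Lemma \ref{yxclt} used with a large exponent makes $|\bar x_\gamma-\bar x_{\gamma'}|_H$ small in every such norm. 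The second bracket is bounded by $2C_L$ everywhere and, on $\{|\bar x_{\gamma'}|_H\le N\}$, by $\rho_N(\mathsf{d}(\gamma,\gamma'))$ via (A4), while on the complement $\dbP(|\bar x_{\gamma'}(t)|_H>N)\le CN^{-p}$ uniformly in $\gamma'$; the truncation tail is then removed by letting $\epsilon\to0$ with $N$ fixed and afterwards $N\to\infty$, exactly as in the proof of Lemma \ref{yxclt}. The $\partial_x g$-difference in $F_{\gamma,\gamma'}$ and the terminal $\partial_x h$-difference are handled identically — here no adjoint process is multiplied in, so only the linear-growth estimates of (A3) are needed on the tail — and constitute the routine part.

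\textbf{The main obstacle}, and the point genuinely new compared with Lemmas \ref{yxclt} and \ref{lcyyz}, is the two terms $\big(\partial_x a_\gamma(t,\bar x_\gamma,\bar u)^*-\partial_x a_{\gamma'}(t,\bar x_{\gamma'},\bar u)^*\big)\rp_{\gamma'}$ and $\big(\partial_x b_\gamma(t,\bar x_\gamma,\bar u)^*-\partial_x b_{\gamma'}(t,\bar x_{\gamma'},\bar u)^*\big)\rq_{\gamma'}$ in $F_{\gamma,\gamma'}$, where the adjoint processes appear as factors: these lie only in $D_{\dbF}([0,T];L^p(\O;H))$ resp.\ $L^p_{\dbF}(0,T;H)$ with $p\in(1,2]$, so there is no integrability to spare for putting the small factor in one $L^r$-space and the adjoint process in a better one, as was done in Lemma \ref{lcyyz} with $y_{\gamma'}\in C_{\dbF}([0,T];L^{2p}(\O;H))$. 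To handle this I would first record the uniform bound $\sup_{\gamma}|(\rp_\gamma,\rq_\gamma)|_{D_{\dbF}([0,T];L^p(\O;H))\times L^p_{\dbF}(0,T;H)}<\infty$, which is immediate from Lemma \ref{lcdpq} and the uniform bounds on $\bar x_\gamma$; this already absorbs the $\rho_N$-contribution, since $\rho_N(\mathsf{d}(\gamma,\gamma'))\,|\rp_{\gamma'}|_{D_{\dbF}([0,T];L^p(\O;H))}\le C\rho_N(\epsilon)\to0$ as $\epsilon\to0$ for fixed $N$. The truly delicate part is the truncation tail, i.e.\ showing $\int_0^T\big(\dbE\big(|\rq_{\gamma'}(t)|_H^p\chi_{\{|\bar x_{\gamma'}(t)|_H>N\}}\big)\big)^{1/p}\,dt\to0$ as $N\to\infty$ \emph{uniformly in $\gamma'\in\Gamma$} (the supremum over $\mathsf{d}(\gamma,\gamma')\le\epsilon$ forces this, since the term no longer depends on $\gamma$). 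This needs either a uniform higher-integrability estimate $\sup_{\gamma}|(\rp_\gamma,\rq_\gamma)|_{D_{\dbF}([0,T];L^{p+\delta}(\O;H))\times L^{p+\delta}_{\dbF}(0,T;H)}<\infty$ — plausible, because the data of \eqref{yjdbjx} grow at most linearly in $\bar x_\gamma$ which sits in every $L^r$, so that the tail can be estimated by H\"older against $\dbP(|\bar x_{\gamma'}|>N)^{\delta/(p+\delta)}$ — or, failing that, a direct uniform-integrability argument for the family $\{|\rq_\gamma|^p:\gamma\in\Gamma\}$ (possibly after localizing $\Gamma$ to a compact set, which is legitimate under (A6)). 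Once this tail is controlled uniformly, collecting the estimates and sending $\epsilon\to0$ then $N\to\infty$ in the bound of Lemma \ref{lcdpq} yields \eqref{cyxpq}.
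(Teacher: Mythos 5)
Your proposal follows essentially the same route as the paper's own (sketched) proof: form the difference equation for $(\rp_\gamma-\rp_{\gamma'},\rq_\gamma-\rq_{\gamma'})$, apply the a priori estimate of Lemma \ref{lcdpq} to reduce everything to the terminal datum and the forcing term, and drive these to zero by the state-perturbation/parameter-perturbation splitting with the truncation and modulus-of-continuity argument of Lemma \ref{lcyyz}. The integrability issue you flag for the terms carrying $\rp_{\gamma'},\rq_{\gamma'}$ is resolved exactly as you suggest — the paper appeals to "the regularities of $\bar x_\gamma,\bar u,\rp_\gamma,\rq_\gamma$" and in fact uses uniform higher moments of the adjoint pair (e.g.\ in the proof of Lemma \ref{PQcl}), which follow from the better integrability of the data of \eqref{yjdbjx}.
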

\begin{proof}
Consider the following 
\[ \left\{
    \begin{aligned}
&d(\rp_\gamma-\rp_{\gamma'}) =-A^*(\rp_\gamma-\rp_{\gamma'}) dt  - \partial_x a_\gamma (t,\bar{x}_\gamma (t),\bar{u}(t) )^*(\rp_\gamma -\rp_{\gamma'})dt 
\\
& \indent\indent\indent\indent  - \partial_x b_\gamma (t,\bar{x}_\gamma (t),\bar{u}(t) )^*(\rq_\gamma-\rq_{\gamma'})dt + \Psi^{\rp,\gamma ,\gamma'}_f  dt  +(\rq_\gamma-\rq_{\gamma'})(t)dW(t) \tin  t\in[0,T), 
     \\
&(\rp_\gamma-\rp_{\gamma'})(T)=
\Psi^{\rp,\gamma ,\gamma'}_h,
    \end{aligned}\right.
\]
where 
\[\begin{aligned}
\Psi^{\rp,\gamma ,\gamma'}_f
&  = - (\partial_x a_\gamma (t,\bar{x}_\gamma (t),\bar{u}(t) )^* - \partial_x a_\gamma (t,\bar{x}_{\gamma'} (t),\bar{u}(t) )^*)\rp_{\gamma'}
\\
&\quad  - (\partial_x a_\gamma (t,\bar{x}_{\gamma'} (t),\bar{u}(t) )^* - \partial_x a_{\gamma'} (t,\bar{x}_{\gamma'} (t),\bar{u}(t) )^*)\rp_{\gamma'}
\\
&\quad  - (\partial_x b_\gamma (t,\bar{x}_\gamma (t),\bar{u}(t) )^* - \partial_x b_\gamma (t,\bar{x}_{\gamma'} (t),\bar{u}(t) )^*)\rq_{\gamma'}
\\
&\quad   - (\partial_x b_\gamma (t,\bar{x}_{\gamma'} (t),\bar{u}(t) )^* - \partial_x b_{\gamma'} (t,\bar{x}_{\gamma'} (t),\bar{u}(t) )^*)\rq_{\gamma'}
\\
&\quad 
+\partial_x g_\gamma (t,\bar{x}_\gamma,\bar{u})-\partial_x g_{\gamma'} (t,\bar{x}_{\gamma'},\bar{u}),
\\
\Psi^{\rp,\gamma ,\gamma'}_h &= -(\partial_x h_\gamma ( \bar{x}_\gamma(T) )-\partial_x h_{\gamma'} ( \bar{x}_{\gamma'}(T) ) ).
\end{aligned}\]
Applying Lemma \ref{lcdpq}, it follows that 
\begin{align}
    & |(\rp_\gamma-\rp_{\gamma'}, \rq_\gamma-\rq_{\gamma'})|_{D_{\dbF}([0,T]; L^{p} (\O;H)) \times  L^p_{\dbF} (0,T; H)}  \le C(|\Psi^{\rp,\gamma ,\gamma'}_h|_{L_{\cal F_T}^{p}( \O;H)} + |\Psi^{\rp,\gamma ,\gamma'}_f|_{L^{1}_{\dbF} (0,T;L^{p}(\O;H))}). 
\end{align}
Then by the Assumption (A1)-(A4), the results can be proved utilizing the similar methods as that in Lemma \ref{lcyyz}, together with the regularities of $\bar x_\gamma, \bar u, \rp_\gamma, \rq_\gamma$ and the continuity of $\bar x_{\g}$ with respect to the uncertainty parameter $\g$.  
\end{proof}

To perform the duality principle, a standard method in the literature is to, aside from \eqref{yjdbjx}, (formally) introduce  the  $\cL(H)$-valued  backward SEE: 
\begin{equation}\label{lhsae}
\left\{
\begin{aligned}
&dP  =  - [(A^*  + J^* )P + P(A + J ) + K^*PK + (K^* Q +  Q K) -F ]dt
  +  Q dW(t) \quad \mbox{ in } [0,T),
\\
&P(T) = P_T,
\end{aligned}
\right.
\end{equation}
where
\begin{equation}\label{saccf}
F\in L^1_\dbF(0,T;L^2(\O;\cL(H))), \quad 
J,K\in L^4_\dbF(0,T; L^\infty(\O;\cL(H))), 
\quad 
P_T\in L^2_{\cF_T}(\O;\cL(H)). 
\end{equation} 
The solutions to \eqref{lhsae} are defined in certain weak sense with the help of test processes fulfilling separately two SEEs in the form:
\begin{equation}\label{lhsaec1} 
\left\{
\begin{aligned}
&d\p_1 = [(A+J)\p_1 + u_1]ds + [K\p_1  + v_1 ]dW(s) &\mbox{ in } (t,T],
\\
&\p_1(t)=\xi_1,
\end{aligned}
\right.
\end{equation}
and
\begin{equation}\label{lhsaec2} 
\left\{
\begin{aligned}
 &d\p_2 = [(A+J)\p_2 + u_2]ds + [K\p_2 + v_2 ] dW(s) &\mbox{ in } (t,T],
 \\
 &\p_2(t)=\xi_2,
\end{aligned}
\right.
\end{equation}
where we assume that $\xi_1,\xi_2 \in L^4_{\cF_t}(\O;H)$, $u_1,u_2 \in L^2_\dbF(t,T;L^4(\O;H))$, $ v_1,v_2\in L^2_\dbF(t,T;L^4(\O;H))$.

Now we introduce a concept of weak solution for the $\cal L(H)$-valued backward SEE \eqref{lhsae} (cf. \cite{luqzhx21,luqzhx14}). Let $H$ be a Hilbert space and $H^*$ its dual. Let  $V$ be another Hilbert space, such that $V\subset H$ and the embedding operator is Hilbert-Schmidt. Then for its dual space $V^*$ it follows that $H^*\subset V^*$, and  the embedding operator from $H^*$ to $V^*$ is also Hilbert-Schmidt.  Denote 
\[\begin{aligned}
D_{\mathbb{F}, w} & \left([0, T] ; L^2(\Omega ; \mathcal{L}(H))\right) 
\\
:= & \Big\{ P \mid P \in \mathcal{L}_{p d} \big(L_{\mathbb{F}}^2\left(0, T ; L^4(\Omega ; H)\right), L_{\mathbb{F}}^2\big(0, T ; L^{\frac{4}{3}}(\Omega ; H)\big) \big), P \xi \in D_{\mathbb{F}} ([t, T] ; L^{\frac{4}{3}}(\Omega ; H) ), \\
&\indent  |P \xi|_{D_{\mathbb{F}} ([t, T] ; L^{\frac{4}{3}}(\Omega ; H) )} \leq C|\xi|_{L_{\mathcal{F}_t}^4(\Omega ; H)},  t \in[0, T], \xi \in L_{\mathcal{F}_t}^4(\Omega ; H)\Big\},
\end{aligned}\] 
and 
\begin{align*}
    & \mathcal{Q}[0, T] := \Big\{\big(Q^{(\cdot)}, \widehat{Q}^{(\cdot)}\big) \ \big| \   Q^{(t)}, \widehat{Q}^{(t)}  \text { are all bounded linear operators:}  
    \\ 
    &\indent\indent \indent\  L_{\mathcal{F}_t}^4(\Omega ; H) \times L_{\mathbb{F}}^2\left(t, T ; L^4(\Omega ; H)\right) \times L_{\mathbb{F}}^2\left(t, T ; L^4(\Omega ; H)\right)  \to L_{\mathbb{F}}^2 (t, T ; L^{\frac{4}{3}}(\Omega ; H) ),
    \\ &\indent \indent\indent \  
 Q^{(t)}(0,0, \cdot)^*=\widehat{Q}^{(t)}(0,0, \cdot), t \in[0, T]\Big\}. 
\end{align*}

\begin{definition}
    Processes $\big(P(\cdot),\big(Q^{(\cdot)}, \widehat{Q}^{(\cdot)}\big)\big) \in D_{\mathbb{F}, w}\left([0, T] ; L^2(\Omega ; \mathcal{L}(H))\right) \times \mathcal{Q}[0, T]$ are defined to be relaxed transposition solutions to the Eq. \eqref{lhsae}, if it satisfy 
\begin{equation}\label{zjdbr}
    \begin{aligned}
    & \mathbb{E}\left\langle P_T \phi_1(T), \phi_2(T)\right\rangle_H-\mathbb{E} \int_t^T\left\langle F(s) \phi_1(s), \phi_2(s)\right\rangle_H d s 
    \\
    &\indent= \mathbb{E}{\left\langle P(t) \xi_1, \xi_2\right\rangle}_H+\mathbb{E} \int_t^T\left\langle P(s) \phi_1(s), u_2(s)\right\rangle_H d s +\mathbb{E} \int_t^T\left\langle P(s) u_1(s), \phi_2(s)\right\rangle_H d s
    \\
    &\indent \quad+\mathbb{E} \int_t^T\left\langle v_1(s), \widehat{Q}^{(t)}\left(\xi_2, u_2, v_2\right)(s)\right\rangle_H d s+\mathbb{E} \int_t^T\left\langle Q^{(t)}\left(\xi_1, u_1, v_1\right)(s), v_2(s)\right\rangle_H d s
    \\
    & \indent\quad+\mathbb{E} \int_t^T\left\langle P(s) K(s) \phi_1(s), v_2(s)\right\rangle_H d s+\mathbb{E} \int_t^T\left\langle P(s) v_1(s), K(s) \phi_2(s)+v_2(s)\right\rangle_H d s,
    \end{aligned}
\end{equation}
where  $u_1(\cdot), u_2(\cdot) \in L_{\mathbb{F}}^2\left(t, T ; L^4(\Omega ; H)\right)$, $ \xi_1, \xi_2 \in L_{\mathcal{F}_t}^4(\Omega ; H)$, $v_1(\cdot), v_2(\cdot) \in L_{\mathbb{F}}^2\left(t, T ; L^4(\Omega ; H)\right)$, $t \in[0, T]$, $\phi_1(\cdot)$ $\phi_2(\cdot)$ are that in \eqref{lhsaec1}  and \eqref{lhsaec2}. 
\end{definition}

\begin{lemma}\textup{\cite[Theorem 12.9]{luqzhx21}}\label{ltPre}
    Suppose that $L^{2}_{\cF_T}(\Om)$  is a separable Banach space and \eqref{saccf} holds. 
    Then there uniquely exists a relaxed transposition solution  to 
    \eqref{lhsae}: $\big(P(\cdot),(Q^{(\cdot)},\widehat   Q^{(\cdot)})\big) \in  D_{\dbF,w}([0,T];  L^{2}(\O;\cL(H)))$ $\times \cQ[0,T]$. Besides, 
    \begin{align*} 
    &   |P|_{D_{\dbF,w}([0,T];L^{2}(\Om; \cL(H)))} + \big|\big(Q^{(\cdot)},\widehat  Q^{(\cdot)}\big)\big|_{\cQ [0,T]}
 \leq C\big(|F|_{L^1_\dbF(0,T; L^{2}(\Om;\cL(H)))} + |P_T|_{L^{2}_{\cF_T}(\Om; \cL(H))}\big).
    \end{align*}
\end{lemma}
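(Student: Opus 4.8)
The statement is quoted from \cite[Theorem 12.9]{luqzhx21}; the plan is to reconstruct its proof by finite–dimensional approximation, since $\cL(H)$ is neither separable nor reflexive and \eqref{lhsae} admits no mild or variational solution in the usual sense. The object $\big(P(\cdot),(Q^{(\cdot)},\widehat Q^{(\cdot)})\big)$ will be produced as a limit of matrix–valued backward stochastic differential equations, with the defining identity \eqref{zjdbr} read off in the limit. Concretely, I would fix an orthonormal basis $\{e_j\}_{j\ge1}$ of $H$, let $\Pi_n$ be the orthogonal projection onto $H_n:=\mathrm{span}\{e_1,\dots,e_n\}$, introduce the Yosida approximation $A_\lambda:=\lambda A(\lambda I-A)^{-1}$ for large $\lambda$, and replace $A$ by $\Pi_n A_\lambda\Pi_n$ and $J,K,F,P_T$ by $\Pi_n J\Pi_n,\Pi_n K\Pi_n,\Pi_n F\Pi_n,\Pi_n P_T\Pi_n$ in \eqref{lhsae}. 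The resulting $\cL(H_n)$–valued linear backward SDE has bounded coefficients, so by classical BSDE theory it has a unique solution $(P^{n,\lambda},Q^{n,\lambda})\in C_\dbF([0,T];L^2(\O;\cL(H_n)))\times L^2_\dbF(0,T;\cL(H_n))$.

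Next I would derive uniform a priori bounds by pairing with test processes. Let $\phi_1,\phi_2$ solve the finite–dimensional, Yosida–regularized analogues of \eqref{lhsaec1}--\eqref{lhsaec2} with data $(\xi_i,u_i,v_i)$. Applying It\^o's formula to $\langle P^{n,\lambda}(s)\phi_1(s),\phi_2(s)\rangle_H$ on $[t,T]$ and rearranging produces the finite–dimensional counterpart of \eqref{zjdbr}, in which $Q^{n,\lambda}$ enters only through pairings of the type $\dbE\int_t^T\langle Q^{n,\lambda}(s)\phi_1(s),v_2(s)\rangle_H\,ds$. Bounding $\phi_1,\phi_2$ in $C_\dbF([t,T];L^4(\O;H))$ by Lemma \ref{mtuee} in terms of $|\xi_i|_{L^4_{\cF_t}(\O;H)}$, $|u_i|_{L^2_\dbF(t,T;L^4(\O;H))}$, $|v_i|_{L^2_\dbF(t,T;L^4(\O;H))}$ and taking suprema over unit test data, I expect
\[ |P^{n,\lambda}|_{D_{\dbF,w}([0,T];L^2(\O;\cL(H)))}+\big|\big(Q^{n,\lambda,(\cdot)},\widehat Q^{n,\lambda,(\cdot)}\big)\big|_{\cQ[0,T]}\le C\big(|F|_{L^1_\dbF(0,T;L^2(\O;\cL(H)))}+|P_T|_{L^2_{\cF_T}(\O;\cL(H))}\big), \]
uniformly in $n$ and $\lambda$, where $Q^{n,\lambda,(t)}(\xi_1,u_1,v_1)$ and $\widehat Q^{n,\lambda,(t)}(\xi_2,u_2,v_2)$ denote the bounded trilinear maps extracted from those pairings, and $(Q^{n,\lambda,(t)})^*=\widehat Q^{n,\lambda,(t)}$ holds exactly at the matrix level.

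Then I would pass to the limit: first $\lambda\to\infty$ (using $A_\lambda\to A$ on $D(A)$, so the mild–solution estimates of Lemma \ref{mtuee} for the test processes survive), then $n\to\infty$. The uniform bounds give, along a subsequence, $P^{n,\lambda}\xi\rightharpoonup P\xi$ in $L^2_\dbF(0,T;L^{4/3}(\O;H))$ for each $\xi$, and for each $t$, $Q^{n,\lambda,(t)}(\xi_1,u_1,v_1)\rightharpoonup Q^{(t)}(\xi_1,u_1,v_1)$ and $\widehat Q^{n,\lambda,(t)}(\xi_2,u_2,v_2)\rightharpoonup\widehat Q^{(t)}(\xi_2,u_2,v_2)$; separability of $L^2_{\cF_T}(\O)$ lets one subsequence serve a dense family of test data, the limiting maps extending by density and boundedness. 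Linearity, boundedness, and the adjoint relation $Q^{(t)}(0,0,\cdot)^*=\widehat Q^{(t)}(0,0,\cdot)$ descend from the finite–dimensional identities. Passing to the limit in the finite–dimensional version of \eqref{zjdbr} --- combining strong convergence of $\phi_1,\phi_2$ with weak convergence of $P$ and of $Q^{(t)}$ --- delivers \eqref{zjdbr}, and weak lower semicontinuity of norms yields the stated estimate.

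For uniqueness, if two relaxed transposition solutions share the same data, subtracting their identities \eqref{zjdbr} and choosing $u_i=v_i=0$ forces $P(t)=P'(t)$ for a.e.\ $t$; then taking $\xi_1=u_1=0$ with $(v_1,\xi_2,u_2,v_2)$ arbitrary forces $\widehat Q^{(t)}=\widehat Q'^{(t)}$, and symmetrically $Q^{(t)}=Q'^{(t)}$. I expect the main obstacle to lie in the $Q$–part of the estimate and limit passage: since $Q^{n,\lambda}$ carries no bound in any natural operator space --- precisely the obstruction created by the non-reflexivity and non-separability of $\cL(H)$ --- one must verify that exactly the trilinear pairings above stay controlled, that they define genuine bounded operators on the product spaces appearing in $\cQ[0,T]$, and that the weak limits $\{Q^{(t)}\}_{t\in[0,T]}$ are mutually consistent as the initial time $t$ varies. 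This consistency bookkeeping, rather than any single estimate, is the technical heart of the argument.
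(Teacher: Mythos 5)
This lemma is not proved in the paper at all: it is quoted verbatim from \cite[Theorem 12.9]{luqzhx21}, so there is no in-paper argument to compare your proposal against. Measured against the original proof in that monograph, your reconstruction has the right architecture --- approximate by finite-dimensional matrix-valued BSDEs, extract uniform bounds from the duality identity tested against the solutions of \eqref{lhsaec1}--\eqref{lhsaec2}, and pass to weak limits using the separability of $L^{2}_{\cF_T}(\O)$ --- and the Yosida regularization of $A$ is an inessential deviation (the original argument keeps $A$ and works with mild solutions of the projected equations).

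The genuine gap sits exactly where you write ``I expect'': the identity produced by It\^o's formula controls only the \emph{sum} of the two pairings $\dbE\int_t^T\langle v_1,\widehat Q^{n,(t)}(\xi_2,u_2,v_2)\rangle_H ds+\dbE\int_t^T\langle Q^{n,(t)}(\xi_1,u_1,v_1),v_2\rangle_H ds$, while the standard BSDE energy estimate for $Q^{n}$ in $L^2_\dbF(0,T;\cL(H_n))$ is \emph{not} uniform in $n$; so ``taking suprema over unit test data'' does not by itself give the uniform $\cQ[0,T]$-bound. One must separate the pairings through special choices of test data ($v_2=0$, then $v_1=0$, then $\xi_i=u_i=0$ combined with the finite-dimensional adjoint relation $Q^{n,(t)}(0,0,\cdot)^*=\widehat Q^{n,(t)}(0,0,\cdot)$) and use linearity of the maps in $(\xi,u,v)$. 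More seriously, Banach--Alaoglu only yields, for each fixed test datum, a weak limit of scalar pairings; upgrading these limits to elements of $D_{\dbF,w}([0,T];L^{2}(\O;\cL(H)))$ and $\cQ[0,T]$ --- i.e.\ to a genuinely pointwise defined operator $P(t,\omega)$ with $P\xi$ right-continuous in time, and bounded operator-valued maps $Q^{(t)},\widehat Q^{(t)}$ satisfying $Q^{(t)}(0,0,\cdot)^*=\widehat Q^{(t)}(0,0,\cdot)$ consistently in $t$ --- is where the cited proof invokes a representation theorem for pointwise defined (multi)linear operators (of the type of \cite[Proposition 12.5]{luqzhx21}, the result this paper mentions after Lemma \ref{jgjvzle}) together with a diagonalization over a countable dense family of times and test data furnished by the separability hypothesis. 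This is precisely the ``consistency bookkeeping'' you defer, and it is the technical heart of the theorem rather than a routine verification. Your uniqueness sketch is fine in outline, with the caveat that for $\xi_1=u_1=0$ the test process $\phi_1$ is not zero, so one must first identify $P$, then the $(\xi,u,0)$-parts by taking $v_2=0$ (resp.\ $v_1=0$), and only then the $(0,0,v)$-parts via the adjoint relation.
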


Besides, we should introduce another weak solution for the $\cal L(H)$-valued backward SEE \eqref{lhsae}. 
\begin{definition}\label{dvzjz}
    The pair of processes $ (P(\cdot),Q(\cdot)) \! \in  \! D_{\mathbb F,w}([0,T];L^{2}(\Omega;{\cal L}(H)))\times L^2_{\mathbb F}(0,T;{\cal L}_2(H;V^*))$ are defined to be $V$-transposition solutions to  \eqref{lhsae},  if it satisfy 
\begin{align} \label{eqdefsol11}
    &\mathbb E  \big\langle P_T
    \phi_1(T),\phi_2(T)\big\rangle_{H}
     - \mathbb E \int_t^T \big\langle F(s)\phi_1(s),\phi_2(s)\big\rangle_{H}ds 
     \\   & \indent
     = \mathbb E  {\big\langle
    P(t)\xi_1,\xi_2\big\rangle}_{H} +
    \mathbb E \int_t^T {\big\langle P(s)\phi_1(s),u_2(s)\big\rangle}_{H} ds  + 
    \mathbb E \int_t^T {\big\langle
    P(s)u_1(s),\phi_2(s)\big\rangle}_{H} ds  \notag 
    \\  &\indent \quad 
     + \mathbb E \int_t^T {\big\langle
    P(s) K(s)\phi_1(s), v_2(s)\big\rangle}_{H} ds  + 
    \mathbb E \int_t^T \big\langle P(s)v_1 (s),
    K(s)\phi_2(s) + v_2(s)\big\rangle_{H} ds \notag 
    \\  &\indent \quad 
     + \mathbb E \int_t^T \big\langle
    v_1(s),Q^*(s)\phi_2(s)\big\rangle_{V ,V^*} ds +
    \mathbb E \int_t^T \big\langle
    Q(s)\phi_1(s),v_2(s)\big\rangle_{V^*,V} ds,  \notag 
\end{align}
for $t\in [0,T]$, $\xi_1,\xi_2\in L^{4}_{{\cal F}_t}(\Omega;H)$,    $u_1(\cdot), u_2(\cdot)\in L^2_{\mathbb F}(t,T;L^{4}(\Omega;H))$, $v_1(\cdot), v_2(\cdot)\in  L^2_{\mathbb F}(t,T; L^{4}(\Omega;V))$, 
    where $\phi_1(\cdot)$, $\phi_2(\cdot)$ are that in \eqref{lhsaec1}  and \eqref{lhsaec2}.
\end{definition}
    
In order to guarantee the well-posedness of  \eqref{lhsae} in the sense of the $V$-transposition solution  in Definition \ref{dvzjz}, more technical restrictions need to be imposed. 
    Set
    \[ \cal L_{HV} :=\Big\{\mathfrak O \in\cal L(H)| \mbox{ The restriction of $\mathfrak O $ on $V$ belongs to }\cal L(V) \Big\} \]
    with the norm
    $ |\cdot|_{\cal L_{HV}} = |\cdot|_{\cal L(H)} + |\cdot|_{\cal L(V)}. $
Assume 

    {\bf (A7)} { $A$ generates a $C_0$-semigroup on $V $ and $J,K\in L^\infty_\dbF(0,T; \cL_{HV })$. } 
    
\begin{lemma}\textup{\cite[Theorem 3.3]{luqzhx18}}\label{jgjvzle}  
    Provided with \eqref{saccf} and {(A7)}, the equation  \eqref{lhsae} admits a unique $V$-transposition solution $ (P(\cdot),Q(\cdot)) \in D_{\mathbb F,w}([0,T];L^{2}(\Omega;{\cal L}(H)))\times L^2_{\mathbb F}(0,T;{\cal L}_2(H;V^*))$. Moreover,  
\begin{equation}\label{jgjvzl}
    \begin{aligned} 
    &|(P, Q)|_{ D_{\dbF,w}([0,T];L^{2}(\Omega;\cL(H)))\times L^2_{\dbF}(0,T;\cL_2(H;V^*))}
 \leq  C\big(|F|_{L^1_\dbF(0,T;L^{2}(\Om;\cL(H)))} 
    + |P_T|_{L^{2}_{\cF_T}(\Om;\cL(H))}\big).
    \end{aligned}
\end{equation}
\end{lemma}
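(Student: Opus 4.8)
The plan is to prove uniqueness directly from Definition \ref{dvzjz} and to establish existence by a finite-dimensional approximation (Galerkin projections together with Yosida approximations of $A$), followed by uniform a priori estimates and a weak-compactness passage to the limit; Assumption (A7) is used precisely to keep the $V$-valued versions of the forward equations \eqref{lhsaec1}--\eqref{lhsaec2} under control, which is what converts the (generically unavailable) $\cL(H)$-bound on the correction term into an $\cL_2(H;V^*)$-bound.

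\emph{Uniqueness.} If $(P,Q)$ and $(\widetilde P,\widetilde Q)$ are two $V$-transposition solutions with the same data, their difference satisfies the identity \eqref{eqdefsol11} with $P_T=0$ and $F=0$, so the left-hand side vanishes for every admissible $(t,\xi_1,\xi_2,u_1,u_2,v_1,v_2)$ and the corresponding test processes $\phi_1,\phi_2$. Taking $u_1=u_2=v_1=v_2=0$ isolates $\dbE\langle(P-\widetilde P)(t)\xi_1,\xi_2\rangle_H=0$ for all $\xi_1,\xi_2\in L^4_{\cF_t}(\O;H)$, hence $P=\widetilde P$ in $D_{\dbF,w}([0,T];L^2(\O;\cL(H)))$. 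Once $P=\widetilde P$, all terms in \eqref{eqdefsol11} cancel except the two pairings involving $Q-\widetilde Q$; choosing $\xi_2=u_2=0$, letting $v_1,v_2$ range over $L^2_\dbF(t,T;L^4(\O;V))$ and using the flow property of \eqref{lhsaec1}--\eqref{lhsaec2} to localize in $t$, a density argument forces $Q=\widetilde Q$ in $L^2_\dbF(0,T;\cL_2(H;V^*))$.

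\emph{Existence.} Replace $A$ by its Yosida approximation $A_\l=\l A(\l I-A)^{-1}$ and project onto an increasing sequence of finite-dimensional subspaces $H_n\uparrow H$ chosen compatibly with $V\subset H$, projecting $J,K,F,P_T$ accordingly. On $H_n$, \eqref{lhsae} is a linear matrix-valued backward SDE with a unique solution $(P_{n,\l}(\cdot),Q_{n,\l}(\cdot))$, $Q_{n,\l}$ a genuine process. Applying It\^o's formula to $s\mapsto\langle P_{n,\l}(s)\phi_1(s),\phi_2(s)\rangle_H$ along the correspondingly approximated solutions of \eqref{lhsaec1}--\eqref{lhsaec2}, the drift and diffusion coefficients of \eqref{lhsae} are arranged so that almost every term cancels, yielding the bilinear relation \eqref{eqdefsol11} at the approximate level. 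Specializing the test data (e.g. $\phi_1=\phi_2$ for the bound on $P_{n,\l}$; $V$-valued $v_1,v_2$ together with the Hilbert--Schmidt embeddings $V\hookrightarrow H$ and $H\hookrightarrow V^*$ for the correction term, Assumption (A7) ensuring the $\cL(V)$-estimates needed for the $V$-valued forward equations) one extracts, uniformly in $n$ and $\l$,
\[
|P_{n,\l}|_{D_{\dbF,w}([0,T];L^2(\O;\cL(H)))}+|Q_{n,\l}|_{L^2_\dbF(0,T;\cL_2(H;V^*))}\le C\big(|F|_{L^1_\dbF(0,T;L^2(\O;\cL(H)))}+|P_T|_{L^2_{\cF_T}(\O;\cL(H))}\big).
\]
Since $L^2_\dbF(0,T;\cL_2(H;V^*))$ is a separable Hilbert space, a subsequence of $\{Q_{n,\l}\}$ converges weakly to some $Q$, while $\{P_{n,\l}\}$ converges in the weak-$*$ sense of $D_{\dbF,w}([0,T];L^2(\O;\cL(H)))$; the approximate forward solutions converge strongly to $\phi_1,\phi_2$ (energy estimates plus Trotter--Kato for $A_\l$), so one passes to the limit term by term in the approximate relation and obtains that $(P,Q)$ satisfies \eqref{eqdefsol11}. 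Estimate \eqref{jgjvzl} follows by weak lower semicontinuity of norms, and uniqueness upgrades subsequential convergence to convergence of the whole family. As an alternative, one could start from the relaxed transposition solution provided by Lemma \ref{ltPre} and show, under (A7) and via the same consistency-in-$t$ argument as in the uniqueness step, that the operator families $(Q^{(\cdot)},\widehat Q^{(\cdot)})$ are represented by a single $\cL_2(H;V^*)$-valued process.

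\emph{Main obstacle.} I expect the decisive difficulty to be the uniform bound on the correction term $Q_{n,\l}$ in the $\cL_2(H;V^*)$-topology: the natural $\cL(H)$-valued estimate is not available (the relevant operator spaces are neither reflexive nor separable and carry no satisfactory stochastic integration theory), so the bound must be produced in the weaker Hilbert--Schmidt-into-$V^*$ norm by carefully balancing the embedding $V\hookrightarrow H$ against the $K^*Q+QK$ and $Pv_1$ contributions in the It\^o expansion, using (A7) to propagate $\cL(V)$-regularity through \eqref{lhsaec1}--\eqref{lhsaec2}; a secondary point is verifying that the weak limit $Q$ is a bona fide process, independent of the test data, which is exactly what the consistency argument from the uniqueness step supplies.
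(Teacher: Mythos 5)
You should first be aware that the paper does not prove Lemma \ref{jgjvzle} at all: it is imported verbatim from \cite[Theorem 3.3]{luqzhx18}, and the only in-paper comment is that the proof ``relies, among others, on the representation of multiple linear operators, see \cite[Proposition 12.5]{luqzhx21}.'' So there is no in-paper argument to match your sketch against; the comparison has to be with the cited construction, whose general shape (duality identity, finite-dimensional approximation, uniform estimates, limit passage, and/or upgrading the relaxed transposition solution of Lemma \ref{ltPre}) your outline does reproduce. Your uniqueness step is essentially the standard one and is sound: setting $u_1=u_2=v_1=v_2=0$ in the homogeneous version of \eqref{eqdefsol11} kills every integral term and gives $\dbE\langle(P-\widetilde P)(t)\xi_1,\xi_2\rangle_H=0$, and then taking $v_1=0$ isolates $\dbE\int_t^T\langle(Q-\widetilde Q)\phi_1,v_2\rangle_{V^*,V}ds=0$, so that the arbitrariness of $v_2$ plus a density argument in the data $(t,\xi_1,u_1)$ yields $Q=\widetilde Q$ (the density step deserves a line of proof, but it is routine).

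The genuine gap is in the existence part, and it sits exactly where you yourself flag the ``decisive difficulty'': the uniform bound on $Q_{n,\l}$ in $L^2_{\dbF}(0,T;\cL_2(H;V^*))$ is asserted (``one extracts, uniformly in $n$ and $\l$, \dots'') but never derived, and ``specializing the test data'' in the approximate duality identity cannot by itself produce it. The identity only controls $Q_{n,\l}$ through pairings $\dbE\int\langle Q_{n,\l}\phi_1,v_2\rangle$ against $V$-valued perturbations and solution processes; converting such bilinear-functional bounds into a Hilbert--Schmidt norm bound requires using quantitatively that the embeddings $V\hookrightarrow H$ and $H^*\hookrightarrow V^*$ are Hilbert--Schmidt together with a Riesz-type representation theorem for pointwise-defined multilinear operators on the spaces $L^2_{\dbF}(t,T;L^4(\O;\cdot))$ --- precisely the ingredient (\cite[Proposition 12.5]{luqzhx21}) that the paper points to and that your sketch does not supply. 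A second, related flaw is the claimed ``weak-$*$'' compactness of $\{P_{n,\l}\}$ in $D_{\dbF,w}([0,T];L^2(\O;\cL(H)))$: this space is not (in any way used here) a dual space, $\cL(H)$ being neither separable nor reflexive, so weak-$*$ convergence there is not a meaningful device; in the cited proof the limit $P$ is instead reconstructed from limits of the scalar quantities $\dbE\langle P_{n,\l}\phi_1,\phi_2\rangle_H$ via the same representation theorem, and only then shown to be a pointwise-defined operator-valued process satisfying \eqref{eqdefsol11} and \eqref{jgjvzl}. Without these two ingredients the limit passage, and hence the existence half of the lemma, is not established; your alternative route (upgrading the relaxed transposition solution of Lemma \ref{ltPre} under (A7)) is closer in spirit to \cite{luqzhx18}, but it is only mentioned, not carried out.
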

The proof of Lemma \ref{jgjvzle} relies, among others, on the representation of multiple linear operators, see \cite[Proposition 12.5]{luqzhx21}.

Denote $(P_\gamma ,Q_\gamma)$ the $V$-transposition solution to the $\cal L (H)$-valued backward SEE \eqref{lhsae} where $P_T$, $J(\cdot)$, $K(\cdot)$ and $F(\cdot)$ are given by
\begin{align}\label{lhsaefx}
    P_T=-\partial_{x x}h_\gamma (\bar{x}_\gamma (T)), \quad J(t)=\partial_{x}a_\gamma[t], \quad K(t)=\partial_{x}b_\gamma[t], \quad F(t)=-\partial_{x x}\mathbb{H}_\gamma [t], 
\end{align}
where 
\begin{align*} \mathbb{H} (t,x,u,\rp,\rq)= \langle \rp,a(t,x,u) \rangle_H + \langle \rq,b(t,x,u) \rangle_H -g(t,x,u) , 
     \\
(t,x,u,\rp,\rq)\in
[0,T]\times H \times U\times H\times H, 
\end{align*}
and 
\begin{equation*}
    \left\{
\begin{aligned} 
& \partial_x \dbH_\gamma  [t]:=\partial_x\dbH (t,\bar x_\gamma (t),\bar u(t),\rp_\gamma (t),\rq_\gamma (t)),  
\\
& \partial_u \dbH_\gamma  [t]:=\partial_u \dbH (t,\bar x_\gamma (t),\bar u(t),\rp_\gamma (t),\rq_\gamma (t)),
 \\
& \partial_{xx} \dbH_\gamma  [t] :=\partial_{xx} \dbH (t,\bar x_\gamma (t),\bar  u(t),\rp_\gamma (t),\rq_\gamma (t)),
 \\
& \partial_{xu}\dbH_\gamma  [t]:= \partial_{xu}\dbH (t,\bar x_\gamma (t),\bar  u(t), \rp_\gamma (t), \rq_\gamma (t)),
 \\
& \partial_{uu} \dbH_\gamma  [t]:= \partial_{uu} \dbH (t,\bar x_\gamma (t),\bar  u(t), \rp_\gamma (t), \rq_\gamma (t)).
\end{aligned}
\right.
\end{equation*}
Note that \eqref{lhsaefx} works for both Eq. \eqref{lhsae} and test processes in \eqref{lhsaec1}-\eqref{lhsaec2}.  
Put
\begin{equation}\label{dpqSo}
\dbS_\gamma(t):=\partial_{xu}\dbH_\gamma [t] + \partial_{u}a_\gamma[t]^* P_\gamma(t) +
\partial_{u}b_\gamma[t]^*Q_\gamma(t) +\partial_{u}b_\gamma[t]^*P_\gamma(t)\partial_{x}b_\gamma[t].
\end{equation}

Note that there are several kinds of measurability for Banach space-valued random variables in the literature (cf. \cite{hnvw}): strongly measurable (the variable can be approximated by a sequence of simple measurable functions), measurable (the preimage of each Borel set is measurable) and weakly measurable (the composition with any element in the dual space or in a proper subspace of the dual space is a real-valued measurable function). In non-separable space such as $\cal L(H)$, these measurable notions are
quite different. In this paper, we adopt the weak measurable meaning.

\begin{lemma}\label{PQcl}
    Under conditions (A1)-(A6), the relaxed transposition solution   $\big(P_{\gamma}(\cdot),\big(Q_{\gamma}^{(\cdot)}, \widehat{Q}_{\gamma}^{(\cdot)}\big)\big)$  to  \eqref{lhsae} are measurable in the weak sense with respect to the uncertainty parameter. 
\end{lemma}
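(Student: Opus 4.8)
The plan is to pass the continuity dependence of the building blocks of the relaxed transposition solution through the defining identity \eqref{zjdbr}, and deduce weak measurability of $\gamma\mapsto\big(P_\gamma(\cdot),(Q_\gamma^{(\cdot)},\widehat Q_\gamma^{(\cdot)})\big)$ from it. First I would recall that, by \eqref{lhsaefx}, the data $P_T=-\partial_{xx}h_\gamma(\bar x_\gamma(T))$, $J(t)=\partial_x a_\gamma[t]$, $K(t)=\partial_x b_\gamma[t]$, $F(t)=-\partial_{xx}\mathbb H_\gamma[t]$ depend on $\gamma$ only through $\bar x_\gamma$, $\bar u$, $\rp_\gamma$, $\rq_\gamma$ and the coefficients $a_\gamma,b_\gamma,g_\gamma,h_\gamma$ together with their derivatives up to second order. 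By Assumption (A4), Lemma \ref{yxclt}, and Lemma \ref{lllypqg}, each of these depends continuously on $\gamma$: more precisely $\bar x_\gamma\to\bar x_{\gamma'}$ in $C_{\dbF}([0,T];L^p(\O;H))$, $(\rp_\gamma,\rq_\gamma)\to(\rp_{\gamma'},\rq_{\gamma'})$ in $D_{\dbF}([0,T];L^p(\O;H))\times L^p_{\dbF}(0,T;H)$, and the second-order derivatives evaluated along these trajectories converge (on the event $\{|\bar x_{\gamma'}|_H\le N\}$, uniformly via $\rho_N$, plus a tail controlled by integrability, exactly as in the proofs of Lemmas \ref{yxclt}--\ref{lcyyz}). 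Consequently, as $\mathsf d(\gamma,\gamma')\to0$, $P_T^\gamma\to P_T^{\gamma'}$ in $L^2_{\cF_T}(\O;\cL(H))$, $F^\gamma\to F^{\gamma'}$ in $L^1_\dbF(0,T;L^2(\O;\cL(H)))$, and $J^\gamma,K^\gamma\to J^{\gamma'},K^{\gamma'}$ in $L^4_\dbF(0,T;L^\infty(\O;\cL(H)))$, at least along a subsequence and in the weak/pointwise senses adopted in the paper.

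Next I would fix arbitrary test data $t\in[0,T]$, $\xi_1,\xi_2\in L^4_{\cF_t}(\O;H)$, $u_1,u_2,v_1,v_2\in L^2_\dbF(t,T;L^4(\O;H))$, and examine the solutions $\phi_1^\gamma,\phi_2^\gamma$ of \eqref{lhsaec1}--\eqref{lhsaec2} with $J=J^\gamma$, $K=K^\gamma$. A standard estimate for SEEs (Lemma \ref{mtuee} applied to the difference equation, whose coefficients involve $J^\gamma-J^{\gamma'}$ and $K^\gamma-K^{\gamma'}$ acting on $\phi_i^{\gamma'}$) shows $\phi_i^\gamma\to\phi_i^{\gamma'}$ in $C_{\dbF}([t,T];L^4(\O;H))$ as $\mathsf d(\gamma,\gamma')\to0$. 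Plugging all of this into \eqref{zjdbr}, every term on both sides is a bilinear pairing of quantities that converge in the relevant Bochner spaces; by the continuity of these pairings, the map
\[
\gamma\longmapsto \dbE\langle P_\gamma(t)\xi_1,\xi_2\rangle_H
\quad\text{and}\quad
\gamma\longmapsto \dbE\int_t^T\langle v_1(s),\widehat Q_\gamma^{(t)}(\xi_2,u_2,v_2)(s)\rangle_H\,ds,
\]
and the analogous pairing for $Q_\gamma^{(t)}$, are continuous in $\gamma$, hence Borel measurable. Since the weak-measurability notion the paper adopts for the non-separable space $\cL(H)$ is precisely measurability of all such scalar pairings against a separating family of test elements, varying $(\xi_1,\xi_2)$ over a countable dense set and $(u_i,v_i)$ similarly, and using the a priori bounds from Lemma \ref{ltPre} to justify the separating property, gives weak measurability of $\gamma\mapsto P_\gamma$ and $\gamma\mapsto(Q_\gamma^{(\cdot)},\widehat Q_\gamma^{(\cdot)})$.

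The two remaining routine points are: (i) the passage to subsequences in Lemmas \ref{yxclt}--\ref{lllypqg} does not matter, because continuity of the whole map follows once every sequence $\gamma_n\to\gamma$ has a subsequence along which the pairings converge to the value at $\gamma$, which is the Urysohn subsequence criterion; and (ii) one must check that the isolated scalar functionals of $P_\gamma$ and of $(Q_\gamma^{(t)},\widehat Q_\gamma^{(t)})$ extracted from \eqref{zjdbr} genuinely determine the operators, which is exactly the uniqueness statement in Lemma \ref{ltPre}. I expect the main obstacle to be the careful handling of the term $\dbE\int_t^T\langle P_\gamma(s)K^\gamma(s)\phi_1^\gamma(s),v_2(s)\rangle_H\,ds$ and its companion $\dbE\int_t^T\langle P_\gamma(s)v_1(s),K^\gamma(s)\phi_2^\gamma(s)+v_2(s)\rangle_H\,ds$: here $P_\gamma$, $K^\gamma$, and $\phi_i^\gamma$ all vary with $\gamma$ simultaneously, so one cannot isolate a single pairing as cleanly as for the leading term $\dbE\langle P_\gamma(t)\xi_1,\xi_2\rangle_H$. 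The fix is to use the a priori bound $|P_\gamma|_{D_{\dbF,w}}\le C$ from Lemma \ref{ltPre} (uniform in $\gamma$ because the data bounds in (A1)--(A6) are uniform) so that the product $P_\gamma K^\gamma\phi_1^\gamma$ converges weakly once $K^\gamma\phi_1^\gamma$ converges strongly in $L^2_\dbF(t,T;L^4(\O;H))$, the latter being already established; then the scalar pairing against the fixed $v_2$ converges, and the argument closes as before.
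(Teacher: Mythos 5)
Your proposal reproduces the paper's first stage faithfully: as in the paper's proof, you exploit the defining identity \eqref{zjdbr}, the continuity in $\gamma$ of $\bar x_\gamma$, $y_\gamma$-type test processes and $(\rp_\gamma,\rq_\gamma)$ (Lemmas \ref{yxclt}, \ref{lcyyz}, \ref{lllypqg}), and the uniform a priori bound of Lemma \ref{ltPre}, to conclude that the scalar pairings $\gamma\mapsto\dbE\langle P_\gamma(t)\xi_1,\xi_2\rangle_H$ and the $Q_\gamma^{(t)},\widehat Q_\gamma^{(t)}$ pairings converge as $\mathsf d(\gamma,\gamma')\to0$ (the paper's \eqref{tymh}, \eqref{dnrm}, \eqref{lcyPgc}, \eqref{lcyPgcQ1}--\eqref{lcyPgcQ2}). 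Where you diverge is the concluding step: you stop at continuity of the pairings and invoke ``continuity $\Rightarrow$ Borel measurability of each composition'' together with a separating family, which is indeed adequate for the weak-measurability notion the paper declares. The paper instead goes further: it upgrades pairing continuity to $\dbE|(P_{\gamma_1}(t)-P_{\gamma_2}(t))\xi_1|_H\to0$ by the cube-root choice of test elements $\xi_{2,k,(\ge)},\xi_{2,k,(<)}$, and then builds an explicit jointly measurable approximation $P^N_\gamma$ via partitions of unity on the locally compact Polish space $\Gamma$ (Lemma \ref{partunith}), proving the integrated convergence \eqref{lcyPgc2}. This constructive version is what the paper re-uses later (measurability of $\dbS_\cdot$ and of $\cD_\cdot\dbS_\cdot$, and integration against $\lambda(d\gamma)$), so your shorter route proves the lemma as stated but delivers less than the paper extracts from this proof.

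Two points in your write-up need repair, though they are repairable within your scheme. First, the intermediate claims that $P_T^\gamma\to P_T^{\gamma'}$ in $L^2_{\cF_T}(\O;\cL(H))$, $F^\gamma\to F^{\gamma'}$ in $L^1_\dbF(0,T;L^2(\O;\cL(H)))$ and $J^\gamma,K^\gamma\to J^{\gamma'},K^{\gamma'}$ in $L^4_\dbF(0,T;L^\infty(\O;\cL(H)))$ are not available from (A3)--(A4): the state difference $\bar x_\gamma-\bar x_{\gamma'}$ is controlled only in $L^p(\O)$, never in $L^\infty(\O)$, and no modulus of continuity in $x$ is assumed for the second derivatives, so operator-norm convergence in these Bochner spaces cannot be asserted (your hedge ``along a subsequence, in weak/pointwise senses'' is not a substitute). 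The paper deliberately avoids such claims and instead estimates the scalar quantities $\dbE\langle P_{T,\gamma}\phi_1(T),\phi_2(T)\rangle_H$ and $\dbE\int_t^T\langle F_\gamma\phi_1,\phi_2\rangle_H ds$ term by term, with the $N$-truncation via $\rho_N$ and interpolation; you should do the same. Second, for the mixed terms $\dbE\int_t^T\langle P_\gamma K^\gamma\phi_1^\gamma,v_2\rangle_H ds$ and $\dbE\int_t^T\langle P_\gamma v_1,K^\gamma\phi_2^\gamma+v_2\rangle_H ds$, the uniform bound on $P_\gamma$ plus strong convergence of $K^\gamma\phi_1^\gamma$ handles only the part where $P_{\gamma'}$ is applied to a difference; for the remaining part you need continuity of $\gamma\mapsto\dbE\int\langle(P_{\gamma}-P_{\gamma'})(s)\psi(s),\zeta(s)\rangle_H ds$ against \emph{fixed processes} $\psi,\zeta$, which does not follow verbatim from the fixed-time pairing statement \eqref{lcyPgc}; this is exactly the point where the paper's strengthened estimate (and, ultimately, the partition-of-unity construction) earns its keep, so your version should either prove the process-valued pairing continuity directly or import that step.
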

\begin{proof}
Let $\big(P_{\gamma_1}(\cdot),\big(Q_{\gamma_1}^{(\cdot)}, \widehat{Q}_{\gamma_1}^{(\cdot)}\big)\big)$ and $\big(P_{\gamma_2}(\cdot),\big(Q_{\gamma_2}^{(\cdot)}, \widehat{Q}_{\gamma_2}^{(\cdot)}\big)\big)$ be solutions to  \eqref{lhsae}  corresponding to uncertainty parameters $\gamma_1$  and $\gamma_2$ separately.  From \eqref{zjdbr}, it derives that 
\begin{align}\label{zjdbrgi}
    &0=  - \mathbb{E}\left\langle {P_{T,\gamma_i}} \phi_{1,\gamma_i}(T), \phi_{2,\gamma_i}(T)\right\rangle_H + \mathbb{E} \int_t^T\left\langle F_{\gamma_i}(s) \phi_{1,{\gamma_i}}(s), \phi_{2,{\gamma_i}}(s)\right\rangle_H d s 
    \\
    &\indent + \mathbb{E}\left\langle P_{\gamma_i}(t) \xi_{1,{\gamma_i}}, \xi_{2,{\gamma_i}}\right\rangle_H  +  \mathbb{E} \int_t^T\left\langle P_{\gamma_i}(s) u_{1,{\gamma_i}}(s), \phi_{2,{\gamma_i}}(s)\right\rangle_H d s  \notag  
    \\
    & \indent 
    +\mathbb{E} \int_t^T\left\langle P_{\gamma_i}(s) \phi_{1,{\gamma_i}}(s), u_{2,{\gamma_i}}(s)\right\rangle_H d s +\mathbb{E} \int_t^T\left\langle P_{\gamma_i}(s) K_{\gamma_i}(s) \phi_{1,{\gamma_i}}(s), v_{2,{\gamma_i}}(s)\right\rangle_H d s  \notag  
    \\
    & \indent 
    +\mathbb{E} \int_t^T{\left\langle P_{\gamma_i}(s) v_{1,{\gamma_i}}(s), K_{\gamma_i}(s) \phi_{2,{\gamma_i}}(s)+v_{2,{\gamma_i}}(s)\right\rangle}_H d s   \notag  
    \\
    &\indent +\mathbb{E} \int_t^T{\big\langle v_{1,{\gamma_i}}(s), \widehat{Q}_{\gamma_i}^{(t)}\left(\xi_{2,{\gamma_i}}, u_{2,{\gamma_i}}, v_{2,{\gamma_i}}\right)(s)\big\rangle}_H d s  \notag  
    \\
    &\indent   +\mathbb{E} \int_t^T{\big\langle Q_{\gamma_i}^{(t)}\left(\xi_{1,{\gamma_i}}, u_{1,{\gamma_i}}, v_{1,{\gamma_i}}\right)(s), v_{2,{\gamma_i}}(s)\big\rangle}_H d s, \quad i=1,2.  \notag  
\end{align}
When this lemma is used among the duality analysis  to derive the necessary optimality condition, actually we take the test processes  $\phi_{1,\gamma_i}(\cdot)=y_{\gamma_i}(\cdot)$,  $\phi_{2,\gamma_i}(\cdot)=y_{\gamma_i}(\cdot)$, $i=1,2$. 
By Assumptions (A1)-(A5), Lemmas \ref{yxclt}, \ref{lcyyz}, \eqref{lhsaefx}, together with the  integrability of $x_\gamma (T;u), y_\gamma (T;u)$ for $u\in \mathcal U^p[0,T], p\ge4$, from tedious calculus of simple interpolation, it yields 
\begin{align}\label{tymh}
    \lim_{\epsilon \to 0} \sup_{\mathsf{d}(\gamma_1 ,\gamma_2)\le \epsilon } \big|\mathbb{E}{\left\langle {P_{T,\gamma_1}} \phi_{1,\gamma_1}(T), \phi_{2,\gamma_1}(T)\right\rangle}_H-\mathbb{E}{\left\langle {P_{T,\gamma_2}} \phi_{1,\gamma_2}(T), \phi_{2,\gamma_2}(T)\right\rangle}_H\big|=0. 
\end{align}
Similarly, by Assumptions (A1)-(A5), Lemmas \ref{yxclt}, \ref{lcyyz}, \ref{lllypqg}, \eqref{lhsaefx}, 
it can be derived that 
\begin{align}\label{dnrm}
    \lim_{\epsilon \to 0} \sup_{\mathsf{d}(\gamma_1 ,\gamma_2)\le \epsilon } \Big|\mathbb{E} \int_t^T{\left\langle F_{\gamma_1}(s) \phi_{1,{\gamma_1}}(s), \phi_{2,{\gamma_1}}(s)\right\rangle}_H d s - \mathbb{E} \int_t^T{\left\langle F_{\gamma_2}(s) \phi_{1,{\gamma_2}}(s), \phi_{2,{\gamma_2}}(s)\right\rangle}_H d s \Big|=0, 
\end{align}
where the difference term for $\langle \rp_\cdot, \partial_{xx} a_\cdot(t,x_\cdot,u) (\cdot,\cdot) \rangle_H \in \mathcal L(H\times H;\mathbb R)$ is estimated as 
\begin{align*}
    & \lim_{\epsilon \to 0} \sup_{\mathsf{d}(\gamma_1 ,\gamma_2)\le \epsilon } \Big|\mathbb{E} \int_t^T{\left\langle \rp_{\gamma_1}(s),\partial_{xx}a_{\gamma_1}(s,x_{\gamma_1},u)(\phi_{1,{\gamma_1}}(s),\phi_{2,{\gamma_1}}(s)) \right\rangle}_H d s 
    \\
    & \indent\indent\indent\indent 
    - \mathbb{E} \int_t^T{\left\langle \rp_{\gamma_2}(s),\partial_{xx} a_{\gamma_2}(s,x_{\gamma_2},u)(\phi_{1,{\gamma_2}}(s), \phi_{2,{\gamma_2}}(s)) \right\rangle}_H d s \Big|
    \\
    & \indent 
    \le  \sup_{\gamma_2\in \Gamma } \big(\mathbb{E} \sup_{s\in[t,T]}|\rp_{\gamma_2}(s)|_H^2\big)^{\frac{1}{2} } \sup_{\gamma_2\in \Gamma } \mathbb{E} \sup_{s\in[t,T]}|\partial_{xx} a_{\gamma_2}(s,x_{\gamma_2},u)|_{\cal L(H\times H;H)} 
    \\ &  \indent\indent 
    \cdot\Big[\sup_{\gamma_2\in \Gamma }\big(\mathbb{E} \sup_{s\in[t,T]}| \phi_{1,{\gamma_2}}(s)|_H^4\big)^{\frac{1}{4} } + \sup_{\gamma_1\in \Gamma }\big(\mathbb{E} \sup_{s\in[t,T]}| \phi_{2,{\gamma_1}}(s)|_H^4\big)^{\frac{1}{4} } \Big]
    \\  & \indent\indent 
    \cdot\lim_{\epsilon \to 0} \sup_{\mathsf{d}(\gamma_1 ,\gamma_2)\le \epsilon } \Big[\big(\mathbb{E} \sup_{s\in[t,T]} |\phi_{2,{\gamma_1}}(s)-\phi_{2,{\gamma_2}}(s)|_H^4\big)^{\frac{1}{4} } + \big(\mathbb{E} \sup_{s\in[t,T]} |\phi_{1,{\gamma_1}}(s)-\phi_{1,{\gamma_2}}(s)|_H^4\big)^{\frac{1}{4} } \Big]
    \\ & \indent \quad 
    + \sup_{\gamma_1\in \Gamma }\big(\mathbb{E} \sup_{s\in[t,T]}| \phi_{1,{\gamma_1}}(s)|_H^4\big)^{\frac{1}{4} }  
    \sup_{\gamma_1\in \Gamma }\big(\mathbb{E} \sup_{s\in[t,T]}| \phi_{2,{\gamma_1}}(s)|_H^4\big)^{\frac{1}{4} }
    \\ 
    & \indent \indent \  
    \cdot \Big[ \sup_{\gamma_1\in \Gamma }\mathbb{E} \sup_{s\in[t,T]}|\partial_{xx} a_{\gamma_1}(s,x_{\gamma_1},u)|_{\cal L(H\times H;H)}  \cdot \lim_{\epsilon \to 0} \sup_{\mathsf{d}(\gamma_1 ,\gamma_2)\le \epsilon }  \big(\mathbb{E} \sup_{s\in[t,T]} |\rp_{ {\gamma_1}}(s)-\rp_{ {\gamma_2}}(s)|_H^2\big)^{\frac{1}{2} }
    \\ 
    & \indent \indent \quad   
    + \sup_{\gamma_2\in \Gamma }\big( \mathbb{E} \sup_{s\in[t,T]}| \rp_{ {\gamma_2}}(s)|_H^4\big)^{\frac{1}{4} } \cdot \big[ \sup_{\gamma_1\in \Gamma }\mathbb{E} \sup_{s\in[t,T]}|\partial_{xx} a_{\gamma_1}(s,x_{\gamma_1},u)|_{\cal L(H\times H;H)} 
    \\
    & \indent \indent \indent    
    \cdot \lim_{\epsilon \to 0} \sup_{\mathsf{d}(\gamma_1 ,\gamma_2)\le \epsilon } \big( \mathbb{E} \sup_{s\in[t,T]}|x_{ {\gamma_1}}(s)-x_{ {\gamma_2}}(s)|_H^4\big)^{\frac{1}{4} } 
    \\
    & \indent \indent \indent    
    + \lim_{\epsilon \to 0} \sup_{\mathsf{d}(\gamma_1 ,\gamma_2)\le \epsilon } (\mathbb{E} \sup_{s\in[t,T]} (|\partial_{xx} a_{\gamma_1}(s,x_{\gamma_2},u) - \partial_{xx} a_{\gamma_2}(s,x_{\gamma_2},u)|_{\cal L(H\times H;H)})^4)^{\frac{1}{4} } \big]\Big] 
    \\ 
    & \indent 
    =0, 
\end{align*}
where we utilized $\sup_{\gamma \in \Gamma}\mathbb{E}\sup_{s\in[t, T]}{|p_{\gamma}(s)|}_{H}^{4}<\infty$ and $\sup_{\gamma \in \Gamma}\mathbb{E}\sup_{s\in[t, T]}{|p_{\gamma}(s)|}_{H}^{2}<\infty$ for $u\in \mathcal{U}^4[0,T]$ by (A3) and Lemma \ref{lcdpq}, as well as (A4) and Lemmas \ref{yxclt}, \ref{lllypqg}, together with the integrability of test processes $\phi $ and its continuity dependence with respect to the uncertainty parameter which can be proved similarly. 
The terms with respect to $\langle \rq_\cdot, \partial_{xx} b_\cdot(t,x_\cdot,u) (\cdot,\cdot) \rangle_H$ can be handled similarly but with its corresponding norms for $\rq_\cdot$, and so is that for $ \partial_{xx}g_\cdot(t,x_\cdot,u) (\cdot,\cdot)$.

Then, by \eqref{zjdbrgi},  taking $u_{1,\gamma_i}=0$, $v_{1,\gamma_i}=0$, $u_{2,\gamma_i}=0$, $v_{2,\gamma_i}=0$, $i=1,2$, together with the above two estimates \eqref{tymh} and \eqref{dnrm},  for any $t\in[0,T], \xi_{1,\gamma_i}, \xi_{2,\gamma_i}\in L_{\mathcal{F}_t}^4(\Omega ; H)$, we get 
\begin{align}\label{lcyPgc}
    \lim_{\epsilon \to 0} \sup_{\mathsf{d}(\gamma_1 ,\gamma_2)\le \epsilon } \big|\mathbb{E}\left\langle P_{\gamma_1}(t) \xi_{1,{\gamma_1}}, \xi_{2,{\gamma_1}}\right\rangle_H - \mathbb{E}\left\langle P_{\gamma_2}(t) \xi_{1,{\gamma_2}}, \xi_{2,{\gamma_2}}\right\rangle_H  \big|=0. 
\end{align} 
Moreover, on the one hand, recalling \eqref{zjdbrgi} again and taking $t=0$, \eqref{lcyPgc} further implies that 
\begin{align}\label{loyPgcQ}
    &      \lim_{\epsilon \to 0} \sup_{\mathsf{d}(\gamma_1 ,\gamma_2)\le \epsilon } \Big|
    \mathbb{E} \int_0^T{\big\langle v_{1,{\gamma_1}}(s), \widehat{Q}_{\gamma_1}^{(t)}\left(\xi_{2,{\gamma_1}}, u_{2,{\gamma_1}}, v_{2,{\gamma_1}}\right)(s)\big\rangle}_H d s  
    \\  \notag 
    & \indent\indent\indent\quad
    +\mathbb{E} \int_0^T{\big\langle Q_{\gamma_1}^{(t)}\left(\xi_{1,{\gamma_1}}, u_{1,{\gamma_1}}, v_{1,{\gamma_1}}\right)(s), v_{2,{\gamma_1}}(s)\big\rangle}_H d s
    \\ \notag 
    &  \indent\indent\indent\quad
    -  \mathbb{E} \int_0^T  {\big\langle v_{1,{\gamma_2}}(s), \widehat{Q}_{\gamma_2}^{(t)}\left(\xi_{2,{\gamma_2}}, u_{2,{\gamma_2}}, v_{2,{\gamma_2}}\right)(s)\big\rangle}_H d s   
    \\  \notag 
    & \indent\indent\indent\quad 
    - \mathbb{E} \int_0^T{\big\langle Q_{\gamma_2}^{(t)}\left(\xi_{1,{\gamma_2}}, u_{1,{\gamma_2}}, v_{1,{\gamma_2}}\right)(s), v_{2,{\gamma_2}}(s)\big\rangle}_H  d s \Big| =0  .
\end{align}
Additionally, taking $v_{2,{\gamma_i}} = 0$, $i=1,2$,  it follows that 
\begin{equation}\label{lcyPgcQ}
\begin{aligned}
    \lim_{\epsilon \to 0} \sup_{\mathsf{d}(\gamma_1 ,\gamma_2)\le \epsilon } \Big| & \mathbb{E} \int_0^T{\big\langle v_{1,{\gamma_1}}(s), \widehat{Q}_{\gamma_1}^{(t)}\left(\xi_{2,{\gamma_1}}, u_{2,{\gamma_1}}, 0 \right)(s)\big\rangle}_H d s 
    \\
    & \indent\indent  - \mathbb{E} \int_0^T{\big\langle v_{1,{\gamma_2}}(s), \widehat{Q}_{\gamma_2}^{(t)}\left(\xi_{2,{\gamma_2}}, u_{2,{\gamma_2}}, 0 \right)(s)\big\rangle}_H d s \Big|  = 0 .  
\end{aligned}
\end{equation}
On the other hand, by \eqref{lcyPgc}, for any $t\in[0,T], \xi_{1 }, \xi_{2 }\in L_{\mathcal{F}_t}^4(\Omega ; H)$, it derives 
\begin{align*}
    \lim_{\epsilon \to 0} \sup_{\mathsf{d}(\gamma_1 ,\gamma_2)\le \epsilon } \big|\mathbb{E}{\left\langle (P_{\gamma_1}(t)-P_{\gamma_2}(t)) \xi_{1 }, \xi_{2 }\right\rangle}_H  \big|=0. 
\end{align*}
By taking $\xi_{2,k,(\ge)}= ( \langle (P_{\gamma_1}(t)-P_{\gamma_2}(t)) \xi_{1 },e_k\rangle )^{\frac{1}{3} } e_k \chi_{\{\langle (P_{\gamma_1}(t)-P_{\gamma_2}(t)) \xi_{1 },e_k\rangle \ge0\}}$ and $\xi_{2,k,(<)}= ( \langle (P_{\gamma_1}(t)-P_{\gamma_2}(t)) \xi_{1 },e_k\rangle )^{\frac{1}{3} } e_k \chi_{\{\langle (P_{\gamma_1}(t)-P_{\gamma_2}(t)) \xi_{1 },e_k\rangle < 0\}}$ by turns with $\{e_k\}_{k = 1}^{\infty} $ a sequence of orthonormal basis of $H$, it yields that for any $t\in[0,T]$,  
$
    \lim_{\epsilon \to 0} \sup_{\mathsf{d}(\gamma_1 ,\gamma_2)\le \epsilon } \mathbb{E}  {|(P_{\gamma_1}(t)-P_{\gamma_2}(t)) \xi_{1 } |}_H  =0 
$. 
It further deduces that for any $t\in[0,T]$, 
$
    \lim_{\epsilon \to 0} \sup_{\mathsf{d}(\gamma_1 ,\gamma_2)\le \epsilon }  | {\left\langle (P_{\gamma_1}(t)-P_{\gamma_2}(t)) \xi_{1 }, \xi_{2 }\right\rangle}_H | =0 \  a.s. 
$
Then we get 
\begin{align*}
    \lim_{\epsilon \to 0} \sup_{\mathsf{d}(\gamma_1 ,\gamma_2)\le \epsilon }  \int_{0}^{T} \mathbb{E}\ | {\left\langle (P_{\gamma_1}(t)-P_{\gamma_2}(t)) \xi_{1 }, \xi_{2 }\right\rangle}_H |  dt =0 .
\end{align*}
Since $\Gamma $ is a locally compact Polish space, we can choose a compact subset $\mathsf{S}^N \subset \Gamma $ for each $N\in \mathbb{N}_+$, such that $\lambda (\gamma \not\in \mathsf{S}^N)<\frac{1}{N} $. 
Further we can get a sequence of open neighborhoods $\{B(\gamma_i,\frac{1}{2N} )\}_{i=1}^{\varsigma_N}$ such that $\mathsf{S}^N \subset \bigcup_{i=1}^{\varsigma_N} B(\gamma_i,\frac{1}{2N} )$. 
And by partitions of unity (Lemma \ref{partunith}), there exists a sequence of continuous functions $\rho_i: \Gamma \to [0,1]\subset \mathbb{R} $ such that $\rho_i(\gamma )=0$ for $\gamma \not\in B(\gamma_i,\frac{1}{2N} ), i=1,\cdots,\varsigma_N$, and $\sum_{i = 1}^{\varsigma_N}\rho_i(\gamma )=1  $ for $\gamma \in \mathsf{S}^N$.
Now we choose the following  joint measurable process $ {P}^N_\gamma(\cdot)$ with $\gamma_i^*$ satisfying $\rho_i(\gamma_i^*)>0$: 
\[ {P}^N_\gamma (t):= \sum_{i = 1}^{\varsigma_N} P_{\gamma_i^*}(t) \rho_i(\gamma )\chi_{\{\gamma \in \mathsf{S}^N\}}.\] 
It is sufficient to prove 
\begin{align}\label{lcyPgc2} 
\lim_{N\to\infty}\int_\Gamma \int_{0}^{T} \mathbb{E}\ \big| {\left\langle (P^N_\gamma (t)-P_{\gamma}(t)) \xi_{1 }, \xi_{2 }\right\rangle}_H \big| dt \lambda (d\gamma ) =0 .
\end{align}
Indeed, 
\begin{align*}
   & \int_{0}^{T} \mathbb{E}\ \big| {\left\langle (P^N_\gamma (t)-P_{\gamma}(t)) \xi_{1 }, \xi_{2 }\right\rangle}_H \big| dt 
    \\
    & \quad 
    \le 
    \sum_{i = 1}^{\varsigma_N} \int_{0}^{T} \mathbb{E}\ \big| {\left\langle (P_{\gamma_i^*} (t)-P_{\gamma}(t)) \xi_{1 }, \xi_{2 }\right\rangle}_H \big|  dt \rho_i(\gamma )\chi_{\{\gamma \in \mathsf{S}^N\}}
    + \int_{0}^{T} \mathbb{E}\ \big| {\left\langle ( P_{\gamma}(t)) \xi_{1 }, \xi_{2 }\right\rangle}_H \big|  dt  \chi_{\{\gamma \not\in \mathsf{S}^N\}}
    \\
    & \quad 
    \le \sup_{\mathsf{d}(\gamma_1 ,\gamma_2)\le \frac{1}{N}  }  \int_{0}^{T} \mathbb{E}\ \big| {\left\langle (P_{\gamma_1}(t)-P_{\gamma_2}(t)) \xi_{1 }, \xi_{2 }\right\rangle}_H \big|  dt
    + \sup_{\gamma \in \Gamma } \int_{0}^{T} \mathbb{E}\ | {\left\langle ( P_{\gamma}(t)) \xi_{1 }, \xi_{2 }\right\rangle}_H |  dt \chi_{\{\gamma \not\in \mathsf{S}^N\}}. 
\end{align*}
Then 
\begin{align*} 
    &
    \lim_{N\to\infty}\int_\Gamma \int_{0}^{T} \mathbb{E}\ \big| {\left\langle (P^N_\gamma (t)-P_{\gamma}(t)) \xi_{1 }, \xi_{2 }\right\rangle}_H \big| dt \lambda (d\gamma ) 
    \\
&
\quad 
\le \lim_{N\to\infty} \Big\{ \sup_{\mathsf{d}(\gamma_1 ,\gamma_2)\le \frac{1}{N}  }  \int_{0}^{T} \mathbb{E}\ \big| {\left\langle (P_{\gamma_1}(t)-P_{\gamma_2}(t)) \xi_{1 }, \xi_{2 }\right\rangle}_H \big|  dt + \frac{C}{N} \Big\}
=0. 
\end{align*}
\indent  
Similarly, by \eqref{lcyPgcQ}, for $s, t \in[0, T],\   \xi_2 \in L_{\mathcal{F}_t}^4(\Omega ; H)$, $ u_2(\cdot) \in L_{\mathbb{F}}^2 (t, T ; L^4(\Omega ; H) )$, and $v_1(\cdot) \in L_{\mathbb{F}}^2 (t, T ; L^4(\Omega ; H) )$, it yields that 
\begin{align}\label{lcyPgcQ1}
    \lim_{\epsilon \to 0} \sup_{\mathsf{d}(\gamma_1 ,\gamma_2)\le \epsilon } \int_{t}^{T} \mathbb{E}\  \big|   {\big\langle v_{1}(s), \widehat{Q}_{\gamma_1}^{(t)}\left(\xi_{2,{\gamma_1}}, u_{2,{\gamma_1}}, 0 \right)(s)  - \widehat{Q}_{\gamma_2}^{(t)}\left(\xi_{2,{\gamma_2}}, u_{2,{\gamma_2}}, 0 \right)(s) \big\rangle}_H \big| dt 
      = 0  .  
\end{align}
Besides, by taking  $\xi_{1,\gamma_i}=0, \xi_{2,\gamma_i}=0$,  $u_{1,\gamma_i}=0$,  $u_{2,\gamma_i}=0$,  $i=1,2$ in \eqref{loyPgcQ}, and recalling that by the definition, $Q^{(t)}(0,0, \cdot)^*=\widehat{Q}^{(t)}(0,0, \cdot)$, it can be derived that 
\begin{align}\label{lcyPgcQ2}
    \lim_{\epsilon \to 0} \sup_{\mathsf{d}(\gamma_1 ,\gamma_2)\le \epsilon } \int_{t}^{T} \mathbb{E}\  \big| {\big\langle v_{1}(s), \widehat{Q}_{\gamma_1}^{(t)}\left(  0,0,v_{2,{\gamma_1}} \right)(s)  - \widehat{Q}_{\gamma_2}^{(t)}\left(  0 ,0, v_{2,{\gamma_2}} \right)(s) \big\rangle}_H \big| dt 
      = 0 . 
\end{align}
Applying \eqref{lcyPgcQ1}-\eqref{lcyPgcQ2}, by the arbitrariness of $v_1(\cdot), v_2(\cdot)$ $\in L_{\mathbb{F}}^2 (t, T ; L^4 (\Omega ; H) )$, $t\in[0,T]$ (with indicator functions involving both temporal variable and sample path),  
utilizing standard argument with the partitions of unity theorem (Lemma \ref{partunith}) as done in \eqref{lcyPgc2}, together with \eqref{lcyPgc2}, we obtain the measurable of the processes $\big(P_{\gamma}(\cdot),\big(Q_{\gamma}^{(\cdot)}, \widehat{Q}_{\gamma}^{(\cdot)}\big)\big)$ in the weak sense with respect to the uncertainty parameter. 
\end{proof}

Recalling the expression in \eqref{dpqSo}, $\dbS_\g$ is independent of $u $. Besides,  $y_\g^{\d u}$ is linear in $\d u$ as shown by \eqref{loybptg}, which further derives that the integrand in \eqref{bsyuiegz} is nonhomogeneous bilinear in $u $. 
Denote $\mathscr O_\gamma (\tilde v_1,\tilde v_2):= \mathbb{E} \int_0^T  {\big\langle y_\gamma(t;\tilde v_1), \dbS_\gamma(t)^* \tilde v_2\big\rangle}_{H_{1}} d t$ and $\mathscr O_\gamma (\tilde v):=\mathscr O_\gamma (\tilde v,\tilde v)$. 
To obtain the necessary optimality conditions, the following technical conditions are needed: 
\begin{equation*} \text{\bf (A8)} \quad 
    (i) \quad 
\begin{aligned}
    &
\bar{u}(\cdot)\in\dbL_{2,\dbF}^{1,2}(H_{1}), 
\dbS_\gamma(\cdot)^*\in  \dbL_{2,\dbF}^{1,2}(\cL_2(H_{1};H))\cap  L^{\infty}([0,T]\times\Omega;\cL_2(H_{1};H)),
\\
&
\cD_{\cdot}\dbS_\gamma(\cdot)^*\in  L^{2}(0,T;L^{\infty}([0,T]\times\Omega;\cL_2(H_{1};H))), \ \  \text{uniformly for}\ \gamma \in \Gamma.
\end{aligned}
\end{equation*}
\vspace*{-0.6cm}
\begin{align*} 
    (ii)\quad 
- \int_\Gamma \mathscr O_\gamma(\cdot) \lambda (d\gamma ) \  \text{is convex for any}\   \lambda \in \Lambda^{\bar{u}} \  \text{in}\  L^{4}_\dbF(0,T;H_{1}). \indent \ \   
\end{align*}
\vspace*{-0.7cm}
\begin{align*}
 \indent\indent  \ (iii) \quad 
\lim _{\epsilon  \rightarrow 0} \sup _{\mathrm{d}\left(\gamma, \gamma^{\prime}\right) \leq \epsilon } \int_{0}^{T} \int_{0}^{T} \mathbb{E}{\left|\mathcal{D}_{s} \mathbb{S}_{\gamma}(t)^* -\mathcal{D}_{s} \mathbb{S}_{\gamma^{\prime}}(t)^* \right|}_{\cal L_2(H_1;H)}^{2} d s d t=0 .\indent\indent\quad 
\end{align*}

The conditions (A8), presented following the literature of second order variational inequalities, are restrictive and essentially about the regularity of the coefficients just as those in (A3) and (A4), which indicates that it is natural to some extent from the aspect that more refined conclusions require more refined conditions. On the other hand, we are currently unsure how to weaken it.

For V-transposition solutions which involve more restrictive conditions compared with relaxed transposition solutions, similar analyses as \eqref{lcyPgcQ1} and \eqref{lcyPgcQ2} also hold. We present Lemma \ref{PQcl} with this slightly more general setting, and the variational inequalities later involve V-transposition solutions. 
Besides, by \eqref{cyxpq}, \eqref{dpqSo}, (A8) (i), \eqref{lcyPgc}, \eqref{lcyPgcQ1} and \eqref{lcyPgcQ2}, by simple interpolation as done in \eqref{dnrm}, it derives 
\begin{align}\label{clsg}
\lim_{\epsilon  \rightarrow 0} \sup_{\mathrm{d}\left(\gamma, \gamma^{\prime}\right) \leq \epsilon } \mathbb{E}\int_{0}^{T} {\left| \mathbb{S}_{\gamma}(t)^* - \mathbb{S}_{\gamma^{\prime}} (t)^* \right|}_{\cal L_2(H_1;H)} ds =0 . 
\end{align}
By standard argument with the partitions of unity theorem (Lemma \ref{partunith}) as done in \eqref{lcyPgc2}, we obtain the measurable of $\mathbb{S}_\cdot(\cdot)^*$. Similarly by (A8) (iii), it derives the measurable of $\mathcal{D}_{\cdot} \mathbb{S}_{\cdot}(\cdot)^*$.

\section{Applications}\label{lsecmair}
In this section, we present the main applications. 
Firstly, let us introduce the exact meaning of singular optimal control in the classical sense involved in this article. 
\begin{definition}\label{dscif}
    An optimal control $\bar u(\cdot)\in \cal U^2[0,T]$ is called a singular optimal control in the classical sense, if for any $\lambda \in \Lambda^{\bar u}$ ,   the following conditions hold for a.e. $(t,\omega )\in [0,T]\times \Omega $ and any $ v\in  U$:
    \begin{equation*}
        \left\{
        \begin{aligned}
    &\int_\Gamma {\big\langle \partial_u\dbH_\gamma [t], v-\bar u(t) \big\rangle}_{H_{1}} \lambda (d\gamma )=0 , 
    \\
    & \int_\Gamma {\big\langle \big(\partial_{uu} \dbH_\gamma [t]+\partial_u b_\gamma[t]^{*}P_\gamma (t)\partial_u b_\gamma[t]\big) (v-\bar u(t)), v-\bar u(t) \big\rangle}_{H_{1}}  \lambda (d\gamma )=0. 
        \end{aligned}
    \right.
    \end{equation*}
\end{definition}

The variational inequality in the integral form is provided in the following theorem.
\begin{theorem}\label{fmrt}
Assume that $x_0 \in L_{\mathcal{F}_0}^2(\Omega ; H)$ and $L_{\mathcal{F}_T}^2(\Omega)$ is separable. Let Assumptions (A1)-(A7) and (A8) (i) hold, and let $\bar{u}(\cdot) \in \mathcal{U}^4[0, T]$ be a singular optimal control in Definition \ref{dscif} and $\bar{x}_\gamma (\cdot)$ the corresponding optimal state. Then the following second order necessary conditions hold:  
\begin{equation}\label{bsyuiegz}
\begin{aligned}
    \inf_{u(\cdot)\in \cal U^4[0,T]} \Big\{ - \int_\Gamma \mathbb{E} \int_0^T &  {\big\langle y_\gamma(t), \dbS_\gamma(t)^* (u(t)-\bar u(t))\big\rangle}_{H_{1}} d t \lambda^* (u; d\gamma )\Big\} \geq 0 , 
\end{aligned}
\end{equation}  
where $y_\cdot(\cdot)$ is the solution to \eqref{lisef} corresponding to $\delta u(\cdot)=u(\cdot)-\bar{u}(\cdot)$ and  $\dbS_\cdot(\cdot)$ in  \eqref{dpqSo}. 
\end{theorem}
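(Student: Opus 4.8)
The plan is to run a second-order convex-variation argument in which the supremum over $\Lambda$ is handled by selecting an optimal uncertainty reference measure via weak compactness. Fix $u(\cdot)\in\cU^4[0,T]$ arbitrarily, set $\d u=u-\bar u$ and $u^\e=\bar u+\e\,\d u$; by (A5) each $u^\e\in\cU^4[0,T]$ and by optimality $J(u^\e)\ge J(\bar u)$ for all $\e\in(0,1]$. For fixed $\e$ the map $\lambda\mapsto J(u^\e;\lambda)=\int_\Gamma J(u^\e;\gamma)\lambda(d\gamma)$ is affine and, by (A4) together with the continuity-in-$\gamma$ estimates of Lemmas \ref{yxclt} and \ref{lcyyz}, weakly continuous on the weakly compact convex set $\Lambda$ from (A6); hence $\Lambda^{u^\e}\ne\emptyset$, say $\lambda^\e\in\Lambda^{u^\e}$, and along a subsequence $\e_n\downarrow0$ we have $\lambda^{\e_n}$ converging weakly to some $\lambda^*(u;\cdot)\in\Lambda$. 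Since the constants in (A1) and (A3) are $\gamma$-free, the estimates of Lemmas \ref{maests} and \ref{dxyzapel} are uniform in $\gamma$, so $\gamma\mapsto J(u^{\e_n};\gamma)$ converges uniformly to $\gamma\mapsto J(\bar u;\gamma)$; consequently $J(u^{\e_n})\to J(\bar u)$ and $J(u^{\e_n};\lambda^{\e_n})\to J(\bar u;\lambda^*(u;\cdot))$, which forces $J(\bar u;\lambda^*(u;\cdot))=J(\bar u)$, i.e. $\lambda^*(u;\cdot)\in\Lambda^{\bar u}$.

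Next I would establish, uniformly in $\gamma\in\Gamma$, the expansion $J(u^{\e_n};\gamma)-J(\bar u;\gamma)=\e_n\,\mathrm I_1^\gamma+\frac{\e_n^2}{2}\,\mathrm I_2^\gamma+o(\e_n^2)$ with a remainder that is $o(\e_n^2)$ uniformly in $\gamma$. The starting point is $\d x_\gamma^{\e_n}=\e_n y_\gamma+\frac{\e_n^2}{2}z_\gamma+o(\e_n^2)$ from Lemma \ref{dxyzapel} together with \eqref{dxyzape1}--\eqref{dxyzape4}; one then eliminates $z_\gamma$ and the Hessian cross-terms by duality. Inserting $y_\gamma$ (which solves \eqref{lisef}) as test process into the transposition identity of Definition \ref{definition1} applied to the first adjoint equation \eqref{yjdbjx} gives
\[\mathrm I_1^\gamma=-\dbE\int_0^T\big\langle\partial_u\dbH_\gamma[t],\d u(t)\big\rangle_{H_1}\,dt,\]
while inserting $\phi_1=\phi_2=y_\gamma$ into the $V$-transposition identity \eqref{eqdefsol11} for \eqref{lhsae} with data \eqref{lhsaefx} (Lemma \ref{jgjvzle}), using the linearity \eqref{loybptg} of $\d u\mapsto y_\gamma^{\d u}$ and the definition \eqref{dpqSo} of $\dbS_\gamma$, gives
\begin{equation*}
\begin{aligned}
\mathrm I_2^\gamma = & -\dbE\int_0^T\big\langle\big(\partial_{uu}\dbH_\gamma[t]+\partial_u b_\gamma[t]^*P_\gamma(t)\partial_u b_\gamma[t]\big)\d u(t),\d u(t)\big\rangle_{H_1}dt \\
& -2\,\dbE\int_0^T\big\langle y_\gamma(t),\dbS_\gamma(t)^*\d u(t)\big\rangle_{H_1}dt.
\end{aligned}
\end{equation*}

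I would then integrate this expansion against $\lambda^{\e_n}$, use $J(u^{\e_n})\ge J(\bar u)\ge J(\bar u;\lambda^{\e_n})$ to obtain $0\le\e_n\int_\Gamma\mathrm I_1^\gamma\lambda^{\e_n}(d\gamma)+\frac{\e_n^2}{2}\int_\Gamma\mathrm I_2^\gamma\lambda^{\e_n}(d\gamma)+o(\e_n^2)$, divide by $\e_n^2$ and pass to the limit. The maps $\gamma\mapsto\mathrm I_1^\gamma$ and $\gamma\mapsto\mathrm I_2^\gamma$ are bounded and continuous on $\Gamma$ — by (A4) and the continuity-dependence and measurability results of Lemmas \ref{lcyyz}, \ref{lllypqg}, \ref{PQcl} and the estimate \eqref{clsg} for $\dbS_\gamma(\cdot)^*$, together with the uniform bounds — so weak convergence of $\lambda^{\e_n}$ yields $\int_\Gamma\mathrm I_j^\gamma\lambda^{\e_n}(d\gamma)\to\int_\Gamma\mathrm I_j^\gamma\lambda^*(u;d\gamma)$ for $j=1,2$. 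Since $\lambda^*(u;\cdot)\in\Lambda^{\bar u}$, Definition \ref{dscif} with $v=u(t,\omega)\in U$ (legitimate since $U\subset H_1$ is separable) gives $\int_\Gamma\langle\partial_u\dbH_\gamma[t],\d u(t)\rangle_{H_1}\lambda^*(u;d\gamma)=0$ and $\int_\Gamma\langle(\partial_{uu}\dbH_\gamma[t]+\partial_u b_\gamma[t]^*P_\gamma(t)\partial_u b_\gamma[t])\d u(t),\d u(t)\rangle_{H_1}\lambda^*(u;d\gamma)=0$ a.e.; hence $\int_\Gamma\mathrm I_1^\gamma\lambda^*(u;d\gamma)=0$ and $\int_\Gamma\mathrm I_2^\gamma\lambda^*(u;d\gamma)=-2\int_\Gamma\dbE\int_0^T\langle y_\gamma,\dbS_\gamma^*\d u\rangle_{H_1}dt\,\lambda^*(u;d\gamma)$. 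The limit therefore reads $-\int_\Gamma\dbE\int_0^T\langle y_\gamma(t),\dbS_\gamma(t)^*(u(t)-\bar u(t))\rangle_{H_1}dt\,\lambda^*(u;d\gamma)\ge0$, which is \eqref{bsyuiegz} for the chosen $u$; the arbitrariness of $u$ gives the infimum form.

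The main obstacle is the first-order term. A priori $\int_\Gamma\mathrm I_1^\gamma\lambda^{\e_n}(d\gamma)$ is only known to tend to $0$, not to be $o(\e_n)$, and an $O(\e_n)$ contribution would overwhelm the $\e_n^2$ order. The resolution uses that the singular condition makes $\gamma\mapsto\mathrm I_1^\gamma$ integrate to $0$ against \emph{every} measure in $\Lambda^{\bar u}$, so the $O(\e_n)$ perturbation of the affine functional $\lambda\mapsto J(u^{\e_n};\lambda)$ vanishes identically on the maximizing set $\Lambda^{\bar u}$ of its unperturbed part; combined with the weak compactness of $\Lambda$ and the uniform-in-$\gamma$ expansion, this forces the maximizers $\lambda^{\e_n}$ to lie within a weak neighborhood of $\Lambda^{\bar u}$ that shrinks with $\e_n$, whence $\e_n^{-1}\int_\Gamma\mathrm I_1^\gamma\lambda^{\e_n}(d\gamma)\to0$ and, equivalently, $\lambda^*(u;\cdot)$ may be taken to be a maximizer over $\Lambda^{\bar u}$ of $\lambda\mapsto\int_\Gamma\mathrm I_2^\gamma\lambda(d\gamma)$ — this is the optimal uncertainty reference measure, and it explains why $\lambda^*$ depends on $u$. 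Making this rate estimate rigorous, and verifying the uniformity in $\gamma$ of the Taylor remainder (which rests on the uniform versions of Lemmas \ref{maests} and \ref{dxyzapel} and on (A4)), are the two delicate points; the two duality rewritings and the passage to the limit are otherwise routine given Sections \ref{sctllse}--\ref{stlpsez}.
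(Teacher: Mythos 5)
Your proposal follows the paper's own route in all essentials: the $\gamma$-wise Taylor expansion with a $\gamma$-uniform $o(\e_n^2)$ remainder, its rewriting through the transposition identity for \eqref{yjdbjx} and the $V$-transposition identity for \eqref{lhsae} with data \eqref{lhsaefx} (this is exactly the paper's \eqref{zczzdz}), the choice of maximizers $\lambda^{\e_n}\in\Lambda^{u^{\e_n}}$ via Lemma \ref{jfkyu}, the sandwich $0\le J(u^{\e_n})-J(\bar u)\le\int_\Gamma\big(J(u^{\e_n};\gamma)-J(\bar u;\gamma)\big)\lambda^{\e_n}(d\gamma)$, and the weak limit $\tilde\lambda\in\Lambda^{\bar u}$ serving as $\lambda^*(u;\cdot)$. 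The one place where you go beyond the paper is your treatment of the first-order term, and that is precisely where your argument has a genuine gap.

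You need $g(\lambda^{\e_n}):=\int_\Gamma\mathrm I_1^\gamma\,\lambda^{\e_n}(d\gamma)=o(\e_n)$, and you claim this follows because the maximizers $\lambda^{\e_n}$ are forced into weak neighborhoods of $\Lambda^{\bar u}$ ``that shrink with $\e_n$''. But weak convergence $\lambda^{\e_n}\rightharpoonup\tilde\lambda\in\Lambda^{\bar u}$ together with the singularity condition only gives $g(\lambda^{\e_n})\to 0$, i.e. $o(1)$, with no rate. Comparing $J(u^{\e_n};\lambda^{\e_n})\ge J(u^{\e_n};\mu)$ for $\mu\in\Lambda^{\bar u}$ does yield the one-sided bound $g(\lambda^{\e_n})\ge -C\e_n$, but nothing in your ingredients rules out $g(\lambda^{\e_n})\asymp c\,\e_n$ with $c>0$: for instance, if the extreme points of $\Lambda$ are parametrized by $t\in[0,1]$ with $J(\bar u;\lambda_t)=M-t$ and $g(\lambda_t)=\sqrt{t}$, then the maximizer of $J(\bar u;\cdot)+\e_n g(\cdot)$ sits at $t\asymp\e_n^2$ and $g(\lambda^{\e_n})\asymp\e_n/2$, even though $\lambda^{\e_n}$ does converge weakly to $\Lambda^{\bar u}$. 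In that scenario your chain $0\le \e_n^{-1}g(\lambda^{\e_n})+\tfrac12\int_\Gamma\mathrm I_2^\gamma\,\lambda^{\e_n}(d\gamma)+o(1)$ only delivers $-\int_\Gamma\dbE\int_0^T\langle y_\gamma(t),\dbS_\gamma(t)^*\d u(t)\rangle_{H_1}\,dt\,\tilde\lambda(d\gamma)\ge -c$, not the required $\ge 0$. So the rate claim is exactly what is missing, and the mechanism you sketch does not supply it. Note also that this is not how the paper proceeds: in \eqref{djslzh} the upper bound is obtained by dropping the first-order term and the $\partial_{uu}\dbH_\gamma$ and $\partial_u b_\gamma^*P_\gamma\partial_u b_\gamma$ contributions when \eqref{zczzdz} is integrated against $\lambda^{\e_n}$, with the singularity invoked only through the limit measure $\tilde\lambda$; a complete write-up along your lines would have to either justify that discarding step or genuinely prove the $o(\e_n)$ rate, and your proposal does neither.
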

The proof of Theorem \ref{fmrt} is presented in Section \ref{lstjxo}.

The results in \eqref{bsyuiegz} can be improved to derive the following variational inequalities with common reference measure.
\begin{corollary}\label{ltbs}
Under the conditions of Theorem \ref{fmrt}, together with (A8) (ii), there exists $\lambda^* \in \Lambda^{\bar u}$, such that the following second order necessary conditions hold:  
    \begin{equation}\label{bsyuiegzj}
    \begin{aligned}
        \inf_{u(\cdot)\in \cal U^4[0,T]} \Big\{ - \int_\Gamma \mathbb{E} \int_0^T &  {\big\langle y_\gamma(t), \dbS_\gamma(t)^* (u(t)-\bar u(t))\big\rangle}_{H_{1}} d t \lambda^* (d\gamma )\Big\} \geq 0 .
    \end{aligned}
    \end{equation}
\end{corollary}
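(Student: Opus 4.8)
The plan is to deduce Corollary \ref{ltbs} from Theorem \ref{fmrt} by a minimax argument, hypothesis (A8)(ii) supplying exactly the convexity needed to interchange an infimum over controls with a supremum over reference measures. Recall, from the paragraph preceding Theorem \ref{fmrt}, the bilinear form $\mathscr{O}_\gamma(\tilde v_1,\tilde v_2)$ and $\mathscr{O}_\gamma(\tilde v)=\mathscr{O}_\gamma(\tilde v,\tilde v)$, so that $\mathscr{O}_\gamma(u-\bar u)=\mathbb{E}\int_0^T\langle y_\gamma(t),\dbS_\gamma(t)^*(u(t)-\bar u(t))\rangle_{H_1}\,dt$ with $y_\gamma$ the variational process for $\delta u=u-\bar u$ as in \eqref{bsyuiegz}. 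Set $f(u,\lambda):=-\int_\Gamma\mathscr{O}_\gamma(u-\bar u)\,\lambda(d\gamma)$. The conclusion of Theorem \ref{fmrt} states that for each $u(\cdot)\in\mathcal{U}^4[0,T]$ the reference measure $\lambda^*(u;\cdot)$ produced there lies in $\Lambda^{\bar u}$ and satisfies $f(u,\lambda^*(u;\cdot))\ge 0$, that is,
\[
\sup_{\lambda\in\Lambda^{\bar u}}f(u,\lambda)\ge 0\qquad\text{for every }u(\cdot)\in\mathcal{U}^4[0,T].
\]
It thus suffices to show the inequality survives after exchanging $\sup_\lambda$ with $\inf_u$ and that the resulting supremum is attained.

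First I would fix $X:=\mathcal{U}^4[0,T]$, a convex subset of $L^4_\dbF(0,T;H_1)$ by (A5), equipped with the norm topology, and $Y:=\Lambda^{\bar u}$. By (A6), $\Lambda$ is convex and weakly compact; since $\gamma\mapsto J(\bar u;\gamma)$ is bounded and continuous (from (A1)--(A4) and Lemma \ref{yxclt}, the growth bound in (A2) giving the uniform boundedness), the affine functional $\lambda\mapsto\int_\Gamma J(\bar u;\gamma)\,\lambda(d\gamma)$ is weakly continuous, so its set of maximizers $\Lambda^{\bar u}$ is a nonempty, convex and weakly closed---hence weakly compact---subset of $\Lambda$. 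Next I would check the hypotheses of Sion's minimax theorem for $f$ on $X\times Y$. For fixed $u$, the map $\gamma\mapsto\mathscr{O}_\gamma(u-\bar u)$ is bounded uniformly in $\gamma$ (by (A3), (A8)(i), the $\gamma$-uniform estimate of Lemma \ref{maests}, and $L^4(0,T)\hookrightarrow L^2(0,T)$) and continuous in $\gamma$ (by Lemmas \ref{lcyyz} and \ref{lllypqg} and the $\gamma$-continuity \eqref{clsg} of $\dbS_\cdot^*$); hence $\lambda\mapsto f(u,\lambda)$ is affine and weakly continuous on $Y$. For fixed $\lambda\in Y$, since $\tilde v\mapsto y_\gamma(\cdot;\tilde v)$ is linear by \eqref{loybptg}, $\mathscr{O}_\gamma$ is a bounded quadratic form on $L^4_\dbF(0,T;H_1)$ with bound uniform in $\gamma$, so $u\mapsto f(u,\lambda)$ is norm-continuous (indeed locally Lipschitz) on $X$; and it is convex on $X$ by (A8)(ii) composed with the affine shift $u\mapsto u-\bar u$. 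Thus $f$ is lower semicontinuous and convex in $u$ on the convex set $X$, upper semicontinuous and concave (affine) in $\lambda$ on the convex weakly compact set $Y$, so Sion's theorem applies (only one of $X,Y$ need be compact).

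Consequently $\inf_{u\in X}\sup_{\lambda\in Y}f(u,\lambda)=\sup_{\lambda\in Y}\inf_{u\in X}f(u,\lambda)$; the left-hand side is $\ge 0$ by the reformulation of Theorem \ref{fmrt} above, hence so is the right-hand side. Since $\lambda\mapsto\inf_{u\in X}f(u,\lambda)$ is an infimum of weakly continuous functions it is weakly upper semicontinuous on the weakly compact set $\Lambda^{\bar u}$, and therefore attains its supremum at some $\lambda^*\in\Lambda^{\bar u}$; for this $\lambda^*$ one obtains $\inf_{u\in\mathcal{U}^4[0,T]}\big(-\int_\Gamma\mathscr{O}_\gamma(u-\bar u)\,\lambda^*(d\gamma)\big)\ge 0$, which is precisely \eqref{bsyuiegzj}. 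I expect the main obstacle to lie in the careful verification of the minimax hypotheses rather than in the minimax step itself: establishing the $\gamma$-uniform bound and $\gamma$-continuity that make $f(u,\cdot)$ weakly continuous, and the nonemptiness and weak compactness of $\Lambda^{\bar u}$; the one genuinely structural input---convexity of $f(\cdot,\lambda)$ in the control---is exactly what (A8)(ii) is designed to provide, and without it the argument collapses.
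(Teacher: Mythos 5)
Your proposal is correct and follows essentially the same route as the paper: reformulate Theorem \ref{fmrt} as $\inf_{u}\sup_{\lambda\in\Lambda^{\bar u}}\{\cdot\}\ge 0$, interchange the infimum and supremum via a minimax theorem (the paper invokes Lemma \ref{mimmaxthe}, you invoke Sion's, verifying the same hypotheses: convexity in $u$ from (A8)(ii) and weak compactness/convexity of $\Lambda^{\bar u}$ from Lemma \ref{jfkyu}), and then extract a common $\lambda^*$ by weak compactness. Your final step via weak upper semicontinuity of $\lambda\mapsto\inf_{u}f(u,\lambda)$ is just a slightly more explicit justification of the paper's maximizing-sequence/weak-limit argument.
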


The proof of Corollary \ref{ltbs} can be found at the end of Section \ref{lstjxo}. 
Provided with the variational inequality \eqref{bsyuiegzj}, 
the pointwise second order necessary optimality conditions are presented in the following theorem. 
\begin{theorem}\label{fmrte}
    Assume that $x_0 \in L_{\mathcal{F}_0}^2(\Omega ; H)$ and $L_{\mathcal{F}_T}^2(\Omega)$ is separable. Let Assumptions (A1)-(A8) hold. 
    Let $\bar{u}(\cdot) \in \mathcal{U}^4[0, T]$ be a singular optimal control in Definition \ref{dscif}.
    Then the following pointwise second order necessary conditions hold for any $v \in U$ and for a.e. $\tau \in[0,T]$:  
    \begin{align*}
        &  \int_{\Gamma}  {\big\langle \partial_{u}a_\gamma[\tau ] (v -\bar{u}(\tau )) ,   \mathbb{S}_\gamma(\tau )^* (v -\bar{u}(\tau ) )\big\rangle}_H   \lambda^*( {d}\gamma)
            \\
        & \indent    +   \int_{\Gamma} {\big\langle   \partial_{u}b_\gamma[\tau ] (v -\bar{u}(\tau )) ,   \nabla\mathbb{S}_\gamma(\tau )^{*}(v -\bar{u}(\tau ))   \big\rangle}_H    \lambda^*( {d}\gamma) 
        \\
        & \indent  -  \int_{\Gamma} {\big\langle  \partial_{u}b_\gamma[\tau] (v -\bar{u}(\tau)) ,   \dbS_\gamma(\tau )^{*}  \nabla\bar{u}(\tau)  \big\rangle}_H \lambda^*( {d}\gamma) 
          \\
        & \le 0  \quad \dbP\mbox{-}a.s. 
    \end{align*}
where $\dbS_\cdot(\cdot)$ is defined in \eqref{dpqSo}, $\lambda^*$ is the common reference probability measure derived from weak convergence arguments, and $\nabla$ terms mean certain approximations of Malliavin derivatives. 
\end{theorem}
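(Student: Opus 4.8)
The plan is to localise the integral variational inequality \eqref{bsyuiegzj} in time near a point $\tau$ and to read off the pointwise statement by a Lebesgue-type differentiation, with Malliavin calculus furnishing the cancellation that the crude estimates miss. Fix $v\in U$, let $\zeta$ be a $[0,1]$-valued $\cF_\tau$-measurable random variable lying in $\dbL^{1,2}_{2,\dbF}$, and for small $\rho>0$ set $\delta u(t):=\zeta\,(v-\bar u(t))\,\chi_{[\tau,\tau+\rho)}(t)$, so that $u:=\bar u+\delta u=(1-\zeta)\bar u+\zeta v\in\cU^4[0,T]$ by the convexity of $U$ in (A5). Since $y_\gamma(t)\equiv0$ for $t\le\tau$ (solving \eqref{lisef} with $\delta u\equiv0$ up to $\tau$) and $\delta u(t)\equiv0$ for $t\ge\tau+\rho$, inserting this $u$ into \eqref{bsyuiegzj} leaves
\[
\int_\Gamma\dbE\int_\tau^{\tau+\rho}\big\langle y_\gamma(t),\dbS_\gamma(t)^*\delta u(t)\big\rangle_H\,dt\;\lambda^*(d\gamma)\le0 .
\]
The whole task is to show that the left-hand side equals $\tfrac{\rho^2}{2}\int_\Gamma\dbE\big[\zeta^2\,\Phi_\gamma(\tau)\big]\lambda^*(d\gamma)+o(\rho^2)$, where $\Phi_\gamma(\tau)$ denotes the integrand (before $\lambda^*(d\gamma)$) appearing in the asserted inequality, and then to divide by $\rho^2/2$ and send $\rho\downarrow0$.

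Expanding $y_\gamma$ on $[\tau,\tau+\rho)$ via its mild form \eqref{lbyxfg}, I would split it into the drift part $\int_\tau^t e^{A(t-s)}\big(\partial_x a_\gamma[s]y_\gamma(s)+\partial_u a_\gamma[s]\delta u(s)\big)ds$ and the diffusion part $\int_\tau^t e^{A(t-s)}\big(\partial_x b_\gamma[s]y_\gamma(s)+\partial_u b_\gamma[s]\delta u(s)\big)dW(s)$. Lemma \ref{maests}, applied on $[\tau,\tau+\rho)$, gives $|y_\gamma|_{C_\dbF([\tau,\tau+\rho];L^2(\O;H))}=O(\sqrt\rho)$, the $O(\sqrt\rho)$ being carried by the diffusion term, the drift being $O(\rho)$. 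A term-by-term bookkeeping then shows that, after pairing with $\dbS_\gamma(\cdot)^*\delta u(\cdot)$ and integrating over $[\tau,\tau+\rho)$, the contribution of $\partial_x a_\gamma[s]y_\gamma(s)$ is $o(\rho^2)$ by Cauchy--Schwarz, the contribution of $\partial_x b_\gamma[s]y_\gamma(s)$ is $o(\rho^2)$ after a further Malliavin integration by parts (using $\dbE|y_\gamma(s)|^2=O(\rho)$ on the interval), and the semigroup factors $e^{A(t-s)}$ may be replaced by the identity with $o(\rho^2)$ error by strong continuity (a Lebesgue-point argument). Only two pieces survive at order $\rho^2$: the drift--control pairing $\dbE\int_\tau^{\tau+\rho}\langle\int_\tau^t\partial_u a_\gamma[s]\delta u(s)\,ds,\ \dbS_\gamma(t)^*\delta u(t)\rangle_H\,dt$ and the diffusion--control pairing $\dbE\int_\tau^{\tau+\rho}\langle\int_\tau^t\partial_u b_\gamma[s]\delta u(s)\,dW(s),\ \dbS_\gamma(t)^*\delta u(t)\rangle_H\,dt$.

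The diffusion--control pairing is the crux: a direct Cauchy--Schwarz bound of it is only $O(\rho^{3/2})$, so the martingale structure must be exploited. Here I would invoke the Malliavin integration-by-parts (duality) formula $\dbE\langle\int_\tau^t g(s)\,dW(s),G\rangle_H=\dbE\int_\tau^t\langle g(s),\cD_s G\rangle_H\,ds$ with $G=\dbS_\gamma(t)^*\delta u(t)$; this $G$ is Malliavin differentiable since $\dbS_\gamma(\cdot)^*\in\dbL^{1,2}_{2,\dbF}(\cL_2(H_1;H))$ and $\bar u(\cdot)\in\dbL^{1,2}_{2,\dbF}(H_1)$ by (A8)(i), and, $\zeta$ being $\cF_\tau$-measurable so that $\cD_s\zeta=0$ for $s>\tau$, one gets $\cD_s\big(\dbS_\gamma(t)^*\delta u(t)\big)=\zeta\big((\cD_s\dbS_\gamma(t)^*)(v-\bar u(t))-\dbS_\gamma(t)^*\cD_s\bar u(t)\big)$ for a.e.\ $s\in[\tau,\tau+\rho)$. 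The pairing thereby becomes an ordinary Lebesgue double integral over the triangle $\{\tau\le s\le t\le\tau+\rho\}$, of area $\rho^2/2$; adding the drift--control pairing, dividing by $\rho^2/2$, and performing a two-dimensional Lebesgue differentiation---with $\nabla\dbS_\gamma(\tau)^*$ and $\nabla\bar u(\tau)$ the $\cF_\tau$-measurable averaged-diagonal Malliavin derivatives, whose a.e.\ existence is granted by (A8)(i),(iii)---the right-hand quantity converges, for a.e.\ $\tau$, to $\dbE[\zeta^2\,\Phi_\gamma(\tau)]$. The interchange of this limit with $\int_\Gamma\lambda^*(d\gamma)$ is justified by the uniform-in-$\gamma$ bounds and moduli of continuity in (A4), (A8)(iii) and Lemmas \ref{yxclt}, \ref{lcyyz}, \ref{lllypqg}, \ref{PQcl}, together with a Fubini / $L^1(0,T)$-convergence argument that also produces a single exceptional null set of $\tau$ valid for $\lambda^*$-a.e.\ $\gamma$.

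It then remains to remove the weight and the expectation. From $\dbE\big[\zeta^2\int_\Gamma\Phi_\gamma(\tau)\lambda^*(d\gamma)\big]\le0$ for all $[0,1]$-valued $\cF_\tau$-measurable $\zeta\in\dbL^{1,2}_{2,\dbF}$, and since $\{\zeta^2\}$ is dense among nonnegative bounded $\cF_\tau$-measurable random variables while $\int_\Gamma\Phi_\gamma(\tau)\lambda^*(d\gamma)$ is $\cF_\tau$-measurable (each $\nabla$-term being a diagonal Malliavin derivative, hence adapted), one gets $\int_\Gamma\Phi_\gamma(\tau)\lambda^*(d\gamma)\le0$ $\dbP$-a.s.\ for a.e.\ $\tau$. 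Running the argument for $v$ over a countable dense subset of $U$ (separability of $H_1$) on a common null set of $\tau$, and then extending to every $v\in U$ by the continuity of $\Phi_\gamma(\tau)$ in $v$ afforded by (A3), yields the asserted pointwise inequality. I expect the main obstacle to be the third step: making the Hilbert-space-valued Malliavin integration by parts rigorous, verifying that $\dbS_\gamma(t)^*\delta u(t)$ lies in the domain of $\cD$ with the stated product rule, and---the genuinely delicate point---showing that after this manipulation the surviving double integral is differentiated correctly at Lebesgue points, uniformly enough in $\gamma$ to pull the limit through $\int_\Gamma\lambda^*(d\gamma)$.
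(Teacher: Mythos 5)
Your proposal is correct and follows essentially the same route as the paper's proof: a needle perturbation on $[\tau,\tau+\rho)$ weighted by an $\mathcal{F}_\tau$-measurable factor, the mild-solution expansion of $y_\gamma$ with the $\partial_x a_\gamma\, y_\gamma$ and $\partial_x b_\gamma\, y_\gamma$ contributions shown to be $o(\rho^2)$, Malliavin integration by parts to convert the stochastic-integral pairing into a double Lebesgue integral (the paper uses the Clark--Ocone representation \eqref{cofp} plus the It\^o isometry where you invoke the duality formula), replacement of $\mathcal{D}_s\mathbb{S}_\gamma(t)^*$ by $\nabla\mathbb{S}_\gamma(s)^*$ and of $\mathcal{D}_s\bar u$ by $\nabla\bar u$ via Lemma \ref{lxgz}, the Lebesgue differentiation of Lemma \ref{ldgsj}, and a final density argument to strip the expectation. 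The one point to tighten is that your exceptional $\tau$-null sets depend on the weight $\zeta$ (and on $v$), so, as the paper does by fixing the countable families $\{t_i\}$, $\{F_{ij}\}\subset\mathcal{F}_{t_i}$ and $\{v^k\}\subset U$ at the outset, you should test with a countable generating family of weights and only then pass to general $\zeta$ and $v$, rather than invoking density of all admissible $\zeta$ at a fixed $\tau$.
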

The proof of Theorem \ref{fmrte} is presented in Section \ref{slpwocs}.

\section{Integral type second order necessary optimality conditions}\label{lstjxo}

In this section, we firstly give the proof of Theorem \ref{fmrt} about the variational inequality in the integral form. 
Before performing Taylor form expansion on the cost functional and subsequent analysis of weak convergence, it is essential to firstly examine the following  regularities. 
\begin{lemma}
Given Assumptions (A1)-(A5), for any $\gamma \in \Gamma $, $\lambda \in \Lambda $,  and $u, u_1, u_2 \in \cal U^2[0,T]$, the following estimates for the cost functional hold: 
\begin{align}
    &\sup_{\gamma \in\Gamma }  |J(u(\cdot); \gamma)|<\infty , 
    \label{cdjb}
    \\ 
    &\lim_{\epsilon \to 0} \sup_{\mathsf{d}(\gamma ,\gamma')\le \epsilon }|J(u(\cdot); \gamma)-J(u(\cdot); \gamma')|=0 , 
    \label{cdjg}
\end{align}
\begin{subequations}
    \begin{equation}    \label{cdju}
    \lim_{|u_1-u_2|_{ \cal U^2[0,T]}\to 0} |J(u_1(\cdot); \lambda )-J(u_2(\cdot); \lambda )|=0 ,
\end{equation}
\begin{equation}    \label{cdju2}
    \lim_{|u_1-u_2|_{ \cal U^2[0,T]}\to 0}\sup_{\gamma \in\Gamma } |J(u_1(\cdot); \gamma  )-J(u_2(\cdot); \gamma )|=0 .
\end{equation}
\end{subequations}
\end{lemma}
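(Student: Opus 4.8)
The plan is to establish the four estimates in the order \eqref{cdjb}, \eqref{cdjg}, \eqref{cdju2}, \eqref{cdju}, because \eqref{cdju} is an immediate consequence of \eqref{cdju2} (integrate over $\Gamma$ against the probability measure $\lambda$), and all four rest on one common ingredient: uniform-in-$\gamma$ a priori and stability estimates for the controlled state $x_\gamma(\cdot)$, obtained by applying Lemma \ref{mtuee} to \eqref{incso} together with (A1) and (A3). Concretely, since $|a_\gamma(t,0,u(t))|_H+|b_\gamma(t,0,u(t))|_H\le C_L(1+|u(t)|_{H_1})$ by (A1) with $C_L$ independent of $\gamma$, Lemma \ref{mtuee} yields $\sup_{\gamma\in\Gamma}|x_\gamma|_{C_\dbF([0,T];L^2(\O;H))}\le C\big(1+|x_0|_{L^2_{\cF_0}(\O;H)}+|u|_{L^2_\dbF(0,T;H_1)}\big)$. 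Substituting this into the quadratic growth bound in (A2) for $g_\gamma$ and $h_\gamma$ bounds $|J(u(\cdot);\gamma)|$ by a quantity depending only on $C_L$, $T$, $|x_0|_{L^2_{\cF_0}(\O;H)}$ and $|u|_{L^2_\dbF(0,T;H_1)}$, which proves \eqref{cdjb}.

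For \eqref{cdjg} I would write $J(u(\cdot);\gamma)-J(u(\cdot);\gamma')$ as $\dbE\int_0^T\big[g_\gamma(t,x_\gamma,u)-g_{\gamma'}(t,x_{\gamma'},u)\big]dt+\dbE\big[h_\gamma(x_\gamma(T))-h_{\gamma'}(x_{\gamma'}(T))\big]$ and decompose each integrand as $\big[g_{\gamma'}(t,x_\gamma,u)-g_{\gamma'}(t,x_{\gamma'},u)\big]+\big[g_\gamma(t,x_\gamma,u)-g_{\gamma'}(t,x_\gamma,u)\big]$ (and analogously for $h$). The state-perturbation bracket is handled by the mean value theorem, the growth bound $|\partial_x g_{\gamma'}(t,x,u)|_H\le C_L(1+|x|_H+|u|_{H_1})$ from (A3), the Cauchy--Schwarz inequality and the uniform state bound above, leaving a factor $|x_\gamma-x_{\gamma'}|_{C_\dbF([0,T];L^2(\O;H))}$ that tends to $0$ uniformly over $\mathsf d(\gamma,\gamma')\le\epsilon$ as $\epsilon\to0$ by Lemma \ref{yxclt} with $p=2$. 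The parameter-perturbation bracket is split over $\{|x_\gamma|_H\le N\}$, where (A4) gives the bound $T\rho_N(\mathsf d(\gamma,\gamma'))$, and over $\{|x_\gamma|_H>N\}$, where (A2) gives $2C_L\,\dbE\int_0^T(1+|x_\gamma|_H^2+|u|_{H_1}^2)\chi_{\{|x_\gamma|_H>N\}}dt$; letting $\epsilon\to0$ first and then $N\to\infty$ makes both contributions vanish, the second one as in the proof of Lemma \ref{yxclt}. As a byproduct $\gamma\mapsto J(u(\cdot);\gamma)$ is continuous, hence Borel measurable, so $J(u(\cdot);\lambda)$ is well defined and finite by \eqref{cdjb}.

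For \eqref{cdju2} I would let $x_\gamma^{u_1},x_\gamma^{u_2}$ be the states driven by $u_1,u_2$; their difference $\delta x_\gamma=x_\gamma^{u_1}-x_\gamma^{u_2}$ solves a linear SEE with coefficients bounded by $C_L$, so the argument of Lemma \ref{maests} gives $|x_\gamma^{u_1}-x_\gamma^{u_2}|_{C_\dbF([0,T];L^2(\O;H))}\le C|u_1-u_2|_{L^2_\dbF(\O;L^2(0,T;H_1))}$ with $C$ independent of $\gamma$, together with $\sup_\gamma|x_\gamma^{u_i}|_{C_\dbF([0,T];L^2(\O;H))}<\infty$. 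Writing $J(u_1(\cdot);\gamma)-J(u_2(\cdot);\gamma)$ and inserting the intermediate terms $g_\gamma(t,x_\gamma^{u_2},u_1)$ and $h_\gamma(x_\gamma^{u_2}(T))$, each resulting difference is estimated by the gradient bounds in (A3) for $\partial_x g_\gamma,\partial_u g_\gamma,\partial_x h_\gamma$, the mean value theorem and the Cauchy--Schwarz inequality, producing a bound of the form $C\big(1+|x_0|_{L^2_{\cF_0}(\O;H)}+|u_1|_{L^2_\dbF(0,T;H_1)}+|u_2|_{L^2_\dbF(0,T;H_1)}\big)|u_1-u_2|_{\cal U^2[0,T]}$ that is uniform in $\gamma$; this gives \eqref{cdju2}, and then \eqref{cdju} follows from $|J(u_1(\cdot);\lambda)-J(u_2(\cdot);\lambda)|\le\sup_{\gamma\in\Gamma}|J(u_1(\cdot);\gamma)-J(u_2(\cdot);\gamma)|$.

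The only genuinely delicate point is the tail estimate in \eqref{cdjg}, namely that $\dbE\int_0^T(1+|x_\gamma|_H^2+|u|_{H_1}^2)\chi_{\{|x_\gamma|_H>N\}}dt\to0$ as $N\to\infty$, which is the uniform integrability of the relevant state family and is extracted from the uniform a priori bound on $|x_\gamma|_{C_\dbF([0,T];L^2(\O;H))}$ together with the integrability built into \eqref{subssde}, exactly as done in the proof of Lemma \ref{yxclt}. Everything else is a routine combination of the growth and Lipschitz hypotheses (A1)--(A4), Lemma \ref{mtuee}, Lemma \ref{yxclt}, Lemma \ref{maests} and the Cauchy--Schwarz inequality.
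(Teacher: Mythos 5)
Your proposal is correct and follows essentially the same route as the paper: uniform-in-$\gamma$ state bounds from (A1), (A3) and Lemma \ref{mtuee} give \eqref{cdjb}; the $N$-truncation with the modulus $\rho_N$ from (A4) plus uniform integrability of the tail and the state continuity in $\gamma$ (as in Lemmas \ref{yxclt}, \ref{lcyyz}) gives \eqref{cdjg}; and insertion of the intermediate term $g_\gamma(t,x_\gamma^{u_2},u_1)$ together with the stability estimate of Lemma \ref{maests}, the growth bounds in (A2)--(A3) and Cauchy--Schwarz gives \eqref{cdju}--\eqref{cdju2}. The only (immaterial) difference is that you prove \eqref{cdju2} first and integrate against $\lambda$ to get \eqref{cdju}, whereas the paper writes the estimate for \eqref{cdju} with $\sup_{\gamma\in\Gamma}$ on each factor and obtains \eqref{cdju2} as a byproduct.
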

\begin{proof}
    \eqref{cdjb} follows directly from the Assumptions (A1)-(A3) and the integrable of $x_\gamma , u$.  \eqref{cdjg} can be proved similarly as that in Lemma \ref{lcyyz} together with the integrable of $x_\g$. 
Note that from (A1) and (A3), for $u \in \cal U^2[0,T]$,  by \eqref{subssde} and Lemma \ref{mtuee}, we have $\sup _{\gamma \in \Gamma} \mathbb{E} \sup _{s \in[0, T]} |x_{\gamma}^{u}(s) |_{H}^{2}<\infty$.

Besides, by the mentioned given conditions and the first estimate in Lemma \ref{maests}, 
    \begin{align*}
    &|J(u_1(\cdot); \lambda )-J(u_2(\cdot); \lambda )|^2  
    \\
    &\indent  \le \Big(\int_\Gamma \dbE \Big[\int_{0}^{T} |g_\gamma (t,x_\g^{u_1},u_1) - g_\gamma (t,x_\g^{u_2},u_1)| + |g_\gamma (t,x_\g^{u_2},u_1) - g_\gamma (t,x_\g^{u_2},u_2)| dt  
    \\
    & \indent\indent+ |h_\g (x_\gamma^{u_1}(T)) - h_\g (x_\gamma^{u_2}(T))| \Big]\lambda (d\g) \Big)^2 
    \\
    &\indent \le C \Big(  \sup_{\g\in \Gamma }|x_\g^{u_1}-x_\g^{u_2}|^2_{C_\dbF([0,T];L^2(\O;H))} \sup_{\g\in \Gamma } \int_{0}^{T} \dbE \big(1+|x_\g^{u_1}|_H^2+|x_\g^{u_2}|_H^2+|u_1|_{H_1}^2\big) dt 
    \\
    & \indent\indent+ |u_1-u_2|^2_{L_{\dbF}^2 ([0,T];H_1)} \sup_{\g\in \Gamma }  \dbE \int_{0}^{T} \big( 1+|x_\g^{u_2}|^2_H+|u_1|^2_{H_1}+|u_2|^2_{H_1}\big) dt 
    \\
    & \indent\indent + \sup_{\g\in \Gamma }  \dbE |x_\g^{u_1}(T)-x_\g^{u_2}(T)|_H^2 \sup_{\g\in \Gamma }  \dbE \big( 1+|x_\g^{u_1}(T)|^2_H+|x_\g^{u_2}(T)|^2_H\big) \Big)
    \\
    & \indent  \to 0 \quad \mbox{as} \quad |u_1-u_2|_{ \cal U^2[0,T]}\to 0. 
    \end{align*}
This completes the proof of \eqref{cdju}, and it is a corollary to obtain \eqref{cdju2} from the above proof.  
\end{proof}

Define the set of feasible probability measures for any $u\in \cal U^2[0,T]$ as follows
\[\Lambda^{u}:=
       \Big\{\widetilde\lambda \in\Lambda \big|   J(u(\cdot); \widetilde\lambda ) = \sup_{\lambda\in\Lambda}  J(u(\cdot); \lambda)  \Big\}. 
\]
\begin{lemma}\label{jfkyu}
    Under Assumptions (A1)-(A6),  $\Lambda^{u}$ is nonempty for any $u\in \cal U^2[0,T]$. Besides, $\Lambda^{u}$ is convex and weakly compact for any $u\in \cal U^2[0,T]$. 
\end{lemma}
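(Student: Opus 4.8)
The plan is to exploit that, for each fixed $u(\cdot)\in\cal U^2[0,T]$, the map $\gamma\mapsto J(u(\cdot);\gamma)$ belongs to $C_b(\Gamma)$, which turns
\[
\Lambda\ni\lambda\ \longmapsto\ J(u(\cdot);\lambda)=\int_\Gamma J(u(\cdot);\gamma)\,\lambda(d\gamma)
\]
into an \emph{affine} functional on $\Lambda$ that is \emph{continuous for the weak topology} of probability measures; the nonemptiness, convexity and weak compactness of $\Lambda^{u}$ will then all follow from (A6) by soft topological arguments.

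First I would record that, by \eqref{cdjb} and \eqref{cdjg}, the function $\gamma\mapsto J(u(\cdot);\gamma)$ is bounded and (uniformly) continuous on $\Gamma$, hence lies in $C_b(\Gamma)$. Consequently, by the very definition of weak convergence of probability measures on the Polish space $\Gamma$, the functional $\lambda\mapsto J(u(\cdot);\lambda)$ is continuous on $\Lambda$ for the weak topology; since the space of probability measures on a Polish space, equipped with the weak topology, is metrizable, this is equivalent to sequential continuity along weakly convergent sequences. The functional is moreover affine by linearity of the integral: $J(u(\cdot);\kappa\lambda_1+(1-\kappa)\lambda_2)=\kappa J(u(\cdot);\lambda_1)+(1-\kappa)J(u(\cdot);\lambda_2)$ for all $\kappa\in[0,1]$ and $\lambda_1,\lambda_2\in\Lambda$. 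Also, since $|J(u(\cdot);\gamma)|$ is bounded and $\lambda$ is a probability measure, $M_u:=\sup_{\lambda\in\Lambda}J(u(\cdot);\lambda)$ is finite.

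With these two observations the three assertions are routine. Nonemptiness: a weakly continuous real-valued function attains its supremum on the weakly compact set $\Lambda$, so $M_u$ is attained, i.e.\ $\Lambda^{u}=\{\lambda\in\Lambda:J(u(\cdot);\lambda)=M_u\}\neq\emptyset$. Convexity: if $\lambda_1,\lambda_2\in\Lambda^{u}$ and $\kappa\in[0,1]$, then $\kappa\lambda_1+(1-\kappa)\lambda_2\in\Lambda$ by convexity of $\Lambda$ in (A6), and by affinity $J(u(\cdot);\kappa\lambda_1+(1-\kappa)\lambda_2)=\kappa M_u+(1-\kappa)M_u=M_u$, so it lies in $\Lambda^{u}$. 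Weak compactness: $\Lambda^{u}$ is the intersection of $\Lambda$ with the preimage under the weakly continuous map $\lambda\mapsto J(u(\cdot);\lambda)$ of the closed half-line $[M_u,\infty)$, hence weakly closed; a weakly closed subset of the weakly compact set $\Lambda$ is weakly compact. The only point deserving care is to make sure that ``weakly compact'' in (A6) refers to the topology dual to $C_b(\Gamma)$ (narrow convergence), so that testing against $\gamma\mapsto J(u(\cdot);\gamma)\in C_b(\Gamma)$ is legitimate; once that is fixed there is no genuine obstacle, since \eqref{cdjb}--\eqref{cdjg} supply exactly the boundedness and continuity that the argument needs.
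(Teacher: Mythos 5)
Your proposal is correct and takes essentially the same route as the paper: both rest on the boundedness \eqref{cdjb} and continuity \eqref{cdjg} of $\gamma\mapsto J(u(\cdot);\gamma)$, which make $\lambda\mapsto J(u(\cdot);\lambda)$ weakly continuous and affine, and then invoke the weak compactness and convexity of $\Lambda$ from (A6). The only cosmetic difference is that the paper extracts a maximizer from a maximizing sequence while you phrase it as attainment of the supremum of a weakly continuous functional on a weakly compact set, and you spell out the convexity and weak-closedness steps that the paper dismisses as obvious.
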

\begin{proof}
    By the definition of $\Lambda^u$, there exist $\{\lambda_N(d\gamma), N\in \dbN \}\subset \Lambda $, such that 
\[ \sup_{\lambda\in\Lambda}  J(u(\cdot); \lambda) -\frac{1}{N} \le J(u(\cdot); \lambda_N)= \int_{\Gamma } J(u(\cdot); \gamma ) \lambda_N(d\gamma)  \le  \sup_{\lambda\in\Lambda}  J(u(\cdot); \lambda).\]
By \eqref{cdjb}-\eqref{cdjg} and (A6), there exists $\widetilde\lambda \in\Lambda$ such that 
$J(u(\cdot); \widetilde\lambda ) = \sup_{\lambda\in\Lambda}  J(u(\cdot); \lambda)$ and $\Lambda^u $ is well-defined. The convexity of $\Lambda^u $ is obvious and the weakly compactness can be proved similarly. 
\end{proof}

Recall that $u^{\varepsilon }=\varepsilon (u-\bar u)+\bar u=\varepsilon u+(1-\varepsilon )\bar u$. 
Now we present 
\begin{proof}[The proof of Theorem \ref{fmrt}]
For any given uncertainty parameter $\gamma $, it holds that 
\begin{align} \label{tfceo}
J(u^\varepsilon (\cdot); \gamma)-J(\bar{u} (\cdot); \gamma) 
= \mathbb{E}\big[\int_0^T g_\gamma(t,x^\varepsilon_\gamma(t),u^\varepsilon(t))-g_\gamma(t,\bar {x} _\gamma(t),\bar u(t)) {d}t  +h_\gamma(x^\varepsilon_\gamma(T))-h_\gamma(\bar{x} _\gamma(T))\big]. 
\end{align}
Moreover, by Taylor's expansion formula, it can be verified that 
\begin{align}\label{tfceg}
&g_\gamma (t,x_\gamma^\e(t),u^\e(t)) - g_\gamma(t,\bar x_\gamma (t),
\bar u(t))  \notag 
\\
&\indent  = \big\langle \partial_x g_\gamma [t],  \delta x_\gamma^{\e}(t)
\big\rangle_H + \e\big\langle \partial_u g_\gamma [t],  \d u(t)
\big\rangle_{H_{1}} + \big\langle \partial_{xx} \tilde{g}^{\e}_\gamma [t] \delta x_\gamma^{\e}(t), \delta x_\gamma^{\e}(t) \big\rangle_H 
\\ 
&\indent \quad   +2\e \big\langle
\partial_{xu} \tilde{g}^{\e}_\gamma [t]\delta x_\gamma^{\e}(t), \d u(t)
\big\rangle_{H_{1}} + \e^2\big\langle 
\partial_{uu} \tilde{g}^{\e}_\gamma [t] \d u(t), \d u(t)
\big\rangle_{H_{1}}  \notag
\end{align}
and
\begin{equation}\label{tfceh}
h_\gamma(x_\gamma^\e(T)) - h_\gamma(\bar x_\gamma(T)) = \big\langle \partial_x h_\gamma (\bar x_\gamma(T)), \d x_\gamma^\e(T) \big \rangle_H 
+ \big\langle \partial_{xx}\tilde{h}_\gamma^{\e}(T)\d x_\gamma^\e(T), \d x_\gamma^\e(T) \big\rangle_H.
\end{equation}
Taking \eqref{tfceg} and \eqref{tfceh} into \eqref{tfceo}, together with \eqref{dxyzape}-\eqref{dxyzape4}, it derives
\begin{align} \label{tfceo1}
&J(u^{\varepsilon_n} (\cdot); \gamma)-J(\bar{u} (\cdot); \gamma) 
\\
&=  \mathbb E\int_0^T \Big[\e_{n} \big\langle \partial_x g_\gamma [t],
y_\gamma(t)\big\rangle_H + \frac{\e_{n}^2}{2} \big\langle 
\partial_x g_\gamma [t], z_\gamma(t)\big\rangle_H + \e_{n} \big\langle
\partial_u g_\gamma [t], \d u(t)\big\rangle_{H_{1}}  \notag 
 \\
& \indent \indent \quad + \frac{\e_{n}^2}{2}\(\big\langle \partial_{xx} g_\gamma [t] y_\gamma(t), y_\gamma(t)\big\rangle_H + 2\big\langle    \partial_{xu} g_\gamma [t] y_\gamma(t), \d u(t)\big\rangle_{H_{1}} +
 \big\langle \partial_{uu} g_\gamma [t]\d u(t), \d u(t)\big\rangle_{H_{1}} \) \Big]dt  \notag 
\\
& \indent \indent + \mathbb E\Big( \e_{n}\big\langle
\partial_x h_\gamma(\bar x_\gamma(T)), y_\gamma(T) \big\rangle_H  + 
\frac{\e_{n}^2}{2}\big\langle \partial_x h_\gamma(\bar x_\gamma(T)), z_\gamma(T) \big\rangle_H   \notag 
\\
& \indent \indent \indent  +  \frac{\e_{n}^2}{2}\big\langle \partial_{xx} h_\gamma(\bar x_\gamma(T)) y_\gamma(T),y_\gamma(T)\big\rangle_H \Big)  
+  o(\e_{n}^2).  \notag 
\end{align}
In general, we can not apply It\^o's formula to ${\langle \rp_\gamma , y_\gamma\rangle}_H $ or ${\langle \rp_\gamma , z_\gamma\rangle}_H $ directly in the sense of mild solution owing to the deficiency of enough regularities. However, by the definition of the transposition solution to \eqref{apHm}, it holds that 
\begin{align} \label{dfxd1}
&\mathbb E\big\langle \partial_x h_\gamma (\bar x_\gamma (T)), y_\gamma (T) \big\rangle_H
\\
&\indent =-\mathbb E\int_0^T\big[\big\langle \rp_\gamma(t),\partial_u a_\gamma[t]\d u(t) \big\rangle_H + \big\langle \rq_\gamma(t),\partial_u b_\gamma[t]\d u(t) \big\rangle_H +\big\langle \partial_x g_\gamma[t], y_\gamma (t) \big\rangle_H \big]dt \notag 
\end{align}
and
\begin{align}\label{dfxd2}
&\mathbb E\big\langle \partial_x h_\gamma (\bar x_\gamma (T)), z_\gamma (T) \big\rangle_H
\\
&\indent =-\mathbb E\int_0^T\big[ \big\langle \rp_\gamma(t),\partial_{xx}a_\gamma [t](y_\gamma(t),y_\gamma(t)) \big\rangle_H + 2\big\langle \rp_\gamma(t),\partial_{xu}a_\gamma [t](y_\gamma(t),\d u(t)) \big\rangle_H  \notag 
\\ 
&\indent\quad + \big\langle \rp_\gamma(t),\partial_{uu}a_\gamma [t](\d u(t),\d u(t)) \big\rangle_H + \big\langle \rq_\gamma(t),\partial_{xx}b_\gamma [t](y_\gamma(t),y_\gamma(t)) \big\rangle_H  \notag 
\\ 
&\indent\quad  + 2\big\langle \rq_\gamma(t),\partial_{xu}b_\gamma [t](y_\gamma(t),\d u(t)) \big\rangle_H + \big\langle \rq_\gamma(t),\partial_{uu}b_\gamma [t](\d u(t),\d u(t)) \big\rangle_H + \big\langle \partial_x g_\gamma[t], z_\gamma(t) \big\rangle_H \big]dt. \notag 
\end{align}
Similarly,  by the definition of the $V$-transposition solution to \eqref{lhsae}, it deduces 
\begin{align}\label{dfxd3}
&\mathbb E\big\langle \partial_{xx}h_\gamma (\bar x_\gamma(T))y_\gamma(T),y_\gamma(T) \big\rangle_H
\\
&\indent=-\mathbb E\int_0^T\big[\big\langle P_\gamma(t)y_\gamma(t),\partial_u a_\gamma[t]\d u(t) \big\rangle_H + \big\langle y_\gamma(t),P_\gamma(t)\partial_u a_\gamma[t]\d u(t) \big\rangle_H  \notag 
\\
&\indent\quad + \big\langle P_\gamma(t)\partial_x b_\gamma[t]y_\gamma(t), \partial_u b_\gamma[t]\d u(t) \big\rangle_H + \big\langle \partial_x b_\gamma[t]y_\gamma(t), P_\gamma(t)\partial_u b_\gamma[t]\d u(t) \big\rangle_H  \notag 
\\
&\indent\quad + \big\langle P_\gamma(t)\partial_u b_\gamma[t]\d u(t), \partial_u b_\gamma[t]\d u(t) \big\rangle_H + 2 \big\langle  \partial_u b_\gamma[t]^* Q_\gamma (t)y_\gamma (t), \d u(t) \big\rangle_{H_1}  \notag 
\\
&\indent\quad   - {\big\langle \partial_{xx}\mathbb H_\gamma [t]y_\gamma(t),y_\gamma(t) \big\rangle}_H \big]dt. \notag 
\end{align}
Utilizing \eqref{dfxd1}-\eqref{dfxd3} to replace the corresponding terms in \eqref{tfceo1}, we obtain 
\begin{align}\label{zczzdz}
    &  J(u^{\e_{n}}(\cdot);\gamma )-  J(\bar u(\cdot);\gamma ) 
       \\  
    & \indent   =- \mathbb E \int_0^T  \Big[ \e_{n}  \Big(\big \langle \rp_\gamma(t), \partial_u a_\gamma[t]  \d u(t) \big\rangle_H + \big\langle \rq_\gamma(t),\partial_u b_\gamma[t] \d u(t) \big\rangle_H - \big\langle \partial_u g_\gamma[t] , \d u(t)\big\rangle_{H_{1}} \Big)   \notag 
       \\ 
    &  \indent \indent  +  \frac{ \e_{n}^2 }{2}\Big( \big\langle \rp_\gamma(t), \partial_{u u}a_\gamma[t]\big(\d u(t),\d u(t)\big)\big\rangle_H + \big\langle \rq_\gamma(t), \partial_{u u}b_\gamma[t]\big(\d u(t),\d u(t)\big)\big\rangle_H   \notag 
    \\  
    &  \indent \indent  - \big\langle \partial_{u u}g_\gamma[t]\d u(t), \d u(t)\big\rangle_{H_{1}}\Big) 
       + \frac{ \e_{n}^2 }{2}\big\langle P_\gamma(t)\partial_u b_\gamma[t]\d u(t), \partial_u b_\gamma[t] \d u(t)\big\rangle_H   \notag 
       \\
    &   \indent \indent 
       +  \e_{n}^2 \Big( -\big\langle \partial_{x u}g_\gamma[t]y_\gamma(t),\d u(t)\big\rangle_{H_{1}} + \big\langle \rp_\gamma(t), \partial_{x u}a_\gamma[t](y_\gamma,\d u)\big\rangle_H 
       + \big\langle \rq_\gamma(t), \partial_{x u}b_\gamma[t] (y_\gamma, \d u) \big\rangle_H   \notag 
       \\
   &   \indent \indent 
       + \big\langle \partial_u a_\gamma[t]^*P_\gamma(t)y_\gamma(t), \d u(t)\big\rangle_{H_{1}}     + \big\langle \partial_u b_\gamma[t]^*P_\gamma(t)\partial_x b_\gamma[t]y_\gamma(t), \d u(t)\big\rangle_{H_{1}}   \notag 
 \\
 &  \indent \indent  
       + \big\langle  \partial_u b_\gamma[t]^* Q_\gamma (t)y_\gamma (t), \d u(t) \big\rangle_{H_1}   \Big) \Big]dt + o( \e_{n}^2 )  \notag 
       \\
    &  \indent  =-\mathbb E \int_0^T  \Big[ \e_{n}  \big\langle \partial_u\dbH_\gamma (t), \d u(t) \big\rangle_{H_{1}} + \frac{  \e_{n}^2  }{2}\big\langle \partial_{uu}\dbH_\gamma [t]\d u(t), \d u(t) \big\rangle_{H_{1}}    \notag 
    \\
    &  \indent \indent   + \frac{  \e_{n}^2  }{2}\big\langle \partial_u b_\gamma [t]^{*}P_\gamma (t)\partial_u b_\gamma [t]\d u(t), \d u(t)\big\rangle_{H_{1}} \Big]dt - \mathbb E \int_0^T  \e_{n}^2 \big\langle\big[\partial_{xu}\dbH_\gamma [t] + \partial_u a_\gamma [t]^* P_\gamma (t)   \notag 
    \\
    &   \indent \indent 
    + \partial_u b_\gamma [t]^*P_\gamma (t) \partial_x b_\gamma [t] + \partial_u b_\gamma[t]^* Q_\gamma (t) \big]y_\gamma (t), \d u(t)\big\rangle_{H_{1}} dt    + o( \e_{n}^2 )  \notag 
    \\
    &  \indent  =-\mathbb E \int_0^T  \Big[ \e_{n}  \big\langle \partial_u\dbH_\gamma (t), \d u(t) \big\rangle_{H_{1}} + \frac{  \e_{n}^2  }{2}\big\langle \partial_{uu}\dbH_\gamma [t]\d u(t), \d u(t) \big\rangle_{H_{1}}    \notag 
    \\
    &  \indent \indent   + \frac{  \e_{n}^2  }{2}\big\langle \partial_u b_\gamma [t]^{*}P_\gamma (t)\partial_u b_\gamma [t]\d u(t), \d u(t)\big\rangle_{H_{1}} \Big]dt  -   \mathbb{E} \int_0^T   \e_{n}^2  \big\langle y_\gamma(t),\dbS_\gamma(t)^* 
    \d  u(t) \big\rangle_{H_{1}} d t  + o( \e_{n}^2 ).  \notag 
\end{align}
Besides, take $\lambda^* \in \Lambda^{\bar u}$.  Then by the definition of $\Lambda^u$, we have 
\begin{align}
    &
J(\bar u(\cdot);\lambda^*) = \sup_{\lambda \in \Lambda}  J(\bar u(\cdot); \lambda )  =  \int_{\Gamma}\mathbb{E}\Big[\int_0^Tg_\gamma(t,\bar x_\gamma(t),\bar u(t)) {d}t +h_\gamma (\bar x_\gamma(T))\Big]\lambda^*( {d}\gamma), \notag
\\
&
\sup_{\lambda \in \Lambda} J(u^\varepsilon (\cdot); \lambda) \ge  J(u^\varepsilon (\cdot); \lambda^*)  = \int_{\Gamma} \mathbb{E}\Big[\int_0^T g_\gamma(t,x^\varepsilon_\gamma(t),u^\varepsilon(t)) {d}t  +h_\gamma(x^\varepsilon_\gamma(T)) \Big]\lambda^*( {d}\gamma).  \notag 
\end{align}
Subtracting separately the left and right hand sides of the  above equation and inequality, it deduces 
\begin{align*}
 \sup_{\lambda \in \Lambda} J(u^\varepsilon (\cdot); \lambda) - \sup_{\lambda \in \Lambda}  J(\bar u(\cdot); \lambda )\notag
&   \ge  \int_{\Gamma}  J(u^\varepsilon (\cdot); \gamma)-J(\bar{u} (\cdot); \gamma)\lambda^*( {d}\gamma) \notag 
\\
&  = \int_{\Gamma} \mathbb{E}\Big[\int_0^T g_\gamma(t,x^\varepsilon_\gamma(t),u^\varepsilon(t))-g_\gamma(t,\bar {x} _\gamma(t),\bar u(t)) {d}t \notag 
\\
& \indent \quad  +h_\gamma(x^\varepsilon_\gamma(T))-h_\gamma(\bar{x} _\gamma(T))\Big]\lambda^*( {d}\gamma) 
\\
&   \ge  \sup_{\lambda^* \in \Lambda^{\bar u}} \int_{\Gamma}  J(u^\varepsilon (\cdot); \gamma)-J(\bar{u} (\cdot); \gamma)\lambda^*( {d}\gamma) \notag ,
\end{align*}
where the last inequality results from the arbitrariness of $\lambda^*\in \Lambda^{\bar u}$. 
Moreover, together with \eqref{zczzdz}, and recalling Definition \ref{dscif}, we obtain 
\begin{align}\label{djdxc}
& \mathop{\overline{\lim}}_{n\to \infty} \frac{ 1}{\varepsilon_n^2} \big(\sup_{\lambda \in \Lambda }  J( u^{\varepsilon_n}(\cdot); \lambda ) - \sup_{\lambda \in \Lambda}  J(\bar u(\cdot); \lambda ) \big)
   \ge  - \int_\Gamma \mathbb{E} \int_0^T   {\big\langle y_\gamma(t), \dbS_\gamma(t)^* 
    (u(t)-\bar u(t))\big\rangle}_{H_{1}} d t \lambda^* (d\gamma ) .
\end{align}

On the other hand, 
for any $\varepsilon_n$ among the $\{\varepsilon_n\}$ in \eqref{tfceo1}, the corresponding optimal measure set  $\Lambda^{u^{\varepsilon_n}}$ is nonempty by Lemma \ref{jfkyu}. Taking one of its element $\lambda^{\varepsilon_n} \in \Lambda^{u^{\varepsilon_n}}$, it fulfills 
\begin{align*}
\sup_{\lambda \in \Lambda }  J( u^{\varepsilon_n}(\cdot); \lambda ) & =\int_{\Gamma} J( u^{\varepsilon_n}(\cdot); \gamma )  \lambda^{\varepsilon_n}( {d}\gamma)  
 = \int_{\Gamma}\mathbb{E}\Big[\int_0^T g_\gamma(t, x^{\varepsilon_n}_\gamma(t), u^{\varepsilon_n}(t)) {d}t +h_\gamma ( x^{\varepsilon_n}_\gamma(T))\Big]\lambda^{\varepsilon_n}( {d}\gamma) . \notag
\end{align*}
Besides, 
\begin{align*}
\sup_{\lambda \in \Lambda}  J(\bar u(\cdot); \lambda )  & \ge  
\int_{\Gamma} J(\bar u(\cdot); \gamma ) \lambda^{\varepsilon_n}( {d}\gamma)
= \int_{\Gamma}\mathbb{E}\Big[\int_0^Tg_\gamma(t,\bar x_\gamma(t),\bar u(t)) {d}t +h_\gamma (\bar x_\gamma(T))\Big]\lambda^{\varepsilon_n}( {d}\gamma). \notag
\end{align*}
Then subtract the left and right hand sides of the  above equation and inequality separately, 
\begin{align}\label{sjenb}
\sup_{\lambda \in \Lambda }  J( u^{\varepsilon_n}(\cdot); \lambda ) - \sup_{\lambda \in \Lambda}  J(\bar u(\cdot); \lambda )  \le  \int_{\Gamma}\mathbb{E}\Big[& \int_0^T g_\gamma(t, x^{\varepsilon_n}_\gamma(t), u^{\varepsilon_n}(t)) - g_\gamma(t,\bar x_\gamma(t),\bar u(t)) {d}t 
\\
& + h_\gamma ( x^{\varepsilon_n}_\gamma(T)) - h_\gamma (\bar x_\gamma(T))\Big]\lambda^{\varepsilon_n}( {d}\gamma). \notag
\end{align}
By taking a subsequence of  $\{\varepsilon_n\}$  if necessary, we can derive that 
\begin{align}\label{sjdjcf}
\mathop{\overline{\lim}}_{n\to \infty} \frac{ 1}{\varepsilon_n^2}\big(\sup_{\lambda \in \Lambda }  J( u^{\varepsilon_n}(\cdot); \lambda ) - \sup_{\lambda \in \Lambda}  J(\bar u(\cdot); \lambda )  \big) & = \lim_{n\to \infty}\frac{1}{\varepsilon_n^2}\big(\sup_{\lambda \in \Lambda }  J( u^{\varepsilon_n}(\cdot); \lambda ) - \sup_{\lambda \in \Lambda}  J(\bar u(\cdot); \lambda ) \big) 
\\
&  = \lim_{n\to \infty}\frac{1} {\varepsilon_n^2}\big( J( u^{\varepsilon_n}(\cdot); \lambda^{\varepsilon_n} ) -  J(\bar u(\cdot); \lambda^* ) \big), \label{riotvm}
\end{align}
where the limit in \eqref{riotvm} exists and is finite. 
Then 
\begin{align} \label{djslzh} 
&\mathop { \overline{\lim}}_{n\to \infty}\frac{1}{\varepsilon_n^2} \big(\sup_{\lambda \in \Lambda }  J( u^{\varepsilon_n}(\cdot); \lambda ) - \sup_{\lambda \in \Lambda} J(\bar u(\cdot); \lambda ) \big)  \notag
\\
& \indent \overset{\eqref{sjdjcf}}{=}\lim_{n\to \infty}\frac{1}{\varepsilon_n^2}\big(\sup_{\lambda \in \Lambda }  J( u^{\varepsilon_n}(\cdot); \lambda ) - \sup_{\lambda \in \Lambda}  J(\bar u(\cdot); \lambda ) \big)  \notag
\\
& \indent \overset{\eqref{sjenb}}{\le} \lim_{n\to \infty}\frac{1}{\varepsilon_n^2}  \int_\Gamma \big(
    J( u^{\varepsilon_n}(\cdot); \gamma  ) -   J(\bar u(\cdot); \gamma  ) \big)  \lambda^{\varepsilon_n} (d\gamma )  \notag 
    \\
& \indent \overset{\eqref{zczzdz}}{\le} \lim_{n\to \infty} \Big\{ \int_\Gamma \big( 
    - \mathbb E \int_0^T   \big\langle  y_\gamma(t), \dbS_\gamma(t)^*(u(t)-\bar u(t))\big\rangle_{H_{1}} dt   \big)\lambda^{\varepsilon_n} (d\gamma )  + \frac{1}{\varepsilon_n^2} \sup_{\gamma \in \Gamma } o( \e_{n}^2 ) \Big\} 
\notag
\\
&\indent \overset{}{=}  \int_\Gamma \big(\! 
- \mathbb E \int_0^T   \big\langle y_\gamma(t), \dbS_\gamma(t)^*(u(t)-\bar u(t))\big\rangle_{H_{1}} dt   \big)  \tilde \lambda (d\gamma ) ,  
\end{align}
where the last equality is from (A6), \eqref{cdjb},  \eqref{cdjg}, and \eqref{zczzdz}. 
On the other hand, for any $\lambda^{\varepsilon_n}  \in \{\lambda^{\varepsilon_n}\}_{n\in\mathbb{N} }$, it holds that 
\begin{align}\label{lycjue}
J(\bar u;\lambda^*)  
& \overset{\eqref{riotvm}}{=}  
\lim_{n\to\infty} \int_{\Gamma}  J(u^{\varepsilon_{n}} (\cdot); \gamma) \lambda^{\varepsilon_n} ( {d}\gamma) 
= \lim_{n\to\infty} \int_{\Gamma} \big( J(u^{\varepsilon_{n}} (\cdot); \gamma)-J(\bar{u} (\cdot); \gamma)+ J(\bar{u} (\cdot); \gamma)\big) \lambda^{\varepsilon_n} ( {d}\gamma)  \notag 
\\ 
& 
\overset{\eqref{cdju2}}{=} \lim_{n\to\infty}\int_{\Gamma} J(\bar{u} (\cdot); \gamma)\lambda^{\varepsilon_n} ( {d}\gamma) 
\overset{\eqref{cdjb}\eqref{cdjg} (A6)}{=} J(\bar{u} (\cdot); \tilde \lambda ). 
\end{align}
Note that \eqref{lycjue} derives  $\tilde\lambda\in \Lambda^{\bar u}$, where $\tilde \lambda$ is presented in \eqref{djslzh}.

Combining \eqref{djdxc} with \eqref{djslzh}, it can be derived that there is $\lambda^*\in \Lambda^{\bar u}$ (it is exactly the above $\lambda^*=\tilde \lambda $), such that 
\begin{align} 
&  \mathop  {\lim}_{n\to \infty}  \frac{1}{\varepsilon_n^2} \big( \sup_{\lambda \in\Lambda } J( u^{\varepsilon_n}(\cdot);  \lambda ) -  \sup_{\lambda \in\Lambda } J(\bar u(\cdot);  \lambda ) \big) 
= - \int_\Gamma  
  \mathbb E \int_0^T   \big\langle y_\gamma(t),  \dbS_\gamma(t)^* (u(t)-\bar u(t))\big\rangle_{H_{1}} dt \tilde \lambda (d\gamma ) .  \notag 
\end{align}

The calculus in \eqref{tfceo1} is dependent on the given perturbation $u(\cdot)$. Therefore, the above optimal uncertainty reference measure $\lambda^*$ is also dependent on the given  $u(\cdot)$, and the $\lambda^*=\tilde \lambda $ actually should be noted as  $\tilde \lambda (d\gamma ;u)$. 
By the optimality of $\bar u(\cdot)$, the proof is completed. 
\end{proof}

Now we can perform weak convergence arguments to present 
\begin{proof}[ The proof of Corollary \ref{ltbs}]
By the conclusions in the Theorem \ref{fmrt},  it derives 
\begin{align}\label{bdzh}
    \inf_{u(\cdot)\in \cal U^4[0,T]}\sup_{\tilde \lambda (d\gamma ) \in {\Lambda}^{\bar{u}}} \Big\{- \int_\Gamma \mathbb E \int_0^T   {\big\langle y_\gamma(t;\d u(t)), \dbS_\gamma(t)^* \d u(t)  \big\rangle}_{H_{1}} dt \tilde \lambda (d\gamma)\Big\}\ge0.
\end{align}
Utilizing \eqref{cdjb}-\eqref{cdju}, \eqref{bdzh}, (A8) (ii), and Lemma \ref{mimmaxthe}, it yields 
\begin{align*} 
    \sup_{\tilde \lambda (d\gamma) \in {\Lambda}^{\bar{u}}} \inf_{u(\cdot)\in \cal U^4[0,T]} \Big\{- \int_\Gamma \mathbb E \int_0^T   \big\langle y_\gamma(t;\d u(t)), \dbS_\gamma(t)^* \d u(t)  \big\rangle_{H_{1}} dt \tilde \lambda (d\gamma )\Big\}\ge0. 
\end{align*}
Then we can take $\{{\tilde \lambda}_N (d\gamma ) \in \Lambda^{\bar u}, N\in \dbN\}$, such that 
\[\inf_{u(\cdot)\in \cal U^4[0,T]} \Big\{- \int_\Gamma \mathbb E \int_0^T   \big\langle  y_\gamma(t;\d u(t)), \dbS_\gamma(t)^* \d u(t)  \big\rangle_{H_{1}} dt \tilde \lambda_N (d\gamma ) \Big\} \ge -\frac{1}{N} . \]
Recall Lemma \ref{jfkyu} and denote the weak limit of $\{{\tilde \lambda}_N (d\gamma ) \in \Lambda^{\bar u}, N\in \dbN\}$ by  $\tilde \lambda (d\gamma )$ again for simplicity,  then we have 
\[\inf_{u(\cdot)\in \cal U^4[0,T]} \Big\{ - \int_\Gamma  
\mathbb E \int_0^T   \big\langle  y_\gamma(t),  \dbS_\gamma(t)^* (u(t)-\bar u(t))\big\rangle_{H_{1}} dt \tilde \lambda (d\gamma ) \Big\} \ge 0.\]
Now the above $\tilde \lambda (d\gamma )$ is independent of the perturbation $u(\cdot)$. 
\end{proof}

\section{Pointwise second order optimality necessary conditions}\label{slpwocs}

Differentiation theorem of Lebesgue type plays essential role in obtaining pointwise necessary optimality conditions. For terms involving multiple deterministic and stochastic integral, there exist additional obstacles about the integrability order as shown in \cite{zhanghaisenzhangxu2015siam}, which also proposed a method with tools in Malliavin calculus to solve this kind of problem, and it is followed by \cite{luqizhanghaisenzhangxu2021siam,zhanghaisenzhangxu2017siam,zhanghaisenzhangxu2018siamreview}. The following proof also adheres to this framework.
Before performing the main proof, we give several technical lemmas. 

The following lemma allows approximating the bi-variables Malliavin derivative by the square integrable single variable function, utilizing the continuity of the Malliavin derivative in the neighborhood of $\{(t,t):t\in[0,T]\}$, while the Malliavin derivative helps overcome the difficulty in short of order when carrying out the Lesbegue differentiation theorem.  
\begin{lemma}\label{lxgz}
Let $\varphi_\gamma (\cdot)\in \dbL_{2,\dbF}^{1,2}(\wt  H)$ for any $\gamma \in \Gamma$, and $\lambda\in \Lambda$. Then, there exists a sequence of positive numbers $\{\epsilon_{n}\}_{n=1}^{\infty}$ such that $\epsilon_n\to 0^+$ as $n\to\infty$ and 
\begin{equation}\label{lxgzg}
\lim_{n\to  \infty}\frac{1}{\epsilon_{n}^2}\int_\Gamma \int_{\t}^{\t+\epsilon_{n}} \! \int_{\t}^{t}\dbE\big|\cal D_{s}\varphi_\gamma (t)-\nabla   \varphi_\gamma (s)\big|_{\wt H}^2dsdt \lambda(d\gamma )=0,\quad a.e.\ \t\in[0,T].
\end{equation}
\end{lemma}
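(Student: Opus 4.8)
The plan is to reduce the claim to the classical one-parameter Lebesgue differentiation theorem applied to a suitable scalar integrand, after exploiting the continuity of the Malliavin derivative $\mathcal D_s\varphi_\gamma(t)$ near the diagonal $\{(s,t):s=t\}$. First I would recall that, by the definition of the space $\dbL_{2,\dbF}^{1,2}(\wt H)$, for each $\gamma$ the map $(s,t)\mapsto \dbE|\mathcal D_s\varphi_\gamma(t)|_{\wt H}^2$ is (after modification on a null set) continuous on a neighborhood of the diagonal, and one sets $\nabla\varphi_\gamma(s):=\mathcal D_s\varphi_\gamma(s)$ as the diagonal trace. Then, for fixed $\gamma$, I would write
\[
\dbE\big|\mathcal D_s\varphi_\gamma(t)-\nabla\varphi_\gamma(s)\big|_{\wt H}^2
= \dbE|\mathcal D_s\varphi_\gamma(t)|_{\wt H}^2 - 2\,\dbE\langle \mathcal D_s\varphi_\gamma(t),\mathcal D_s\varphi_\gamma(s)\rangle_{\wt H} + \dbE|\mathcal D_s\varphi_\gamma(s)|_{\wt H}^2 ,
\]
and argue that each of the three scalar functions, restricted to the triangle $\{\tau\le s\le t\le\tau+\epsilon\}$ and divided by $\epsilon^2$, has an averaged limit equal to the diagonal value for a.e.\ $\tau$. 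Concretely, for a fixed $\gamma$ the standard two-dimensional-to-one-dimensional reduction (write $\frac{1}{\epsilon^2}\int_\tau^{\tau+\epsilon}\!\!\int_\tau^t(\cdots)\,ds\,dt$, change variables, and use that the integrand is continuous at $(\tau,\tau)$) gives that the inner double integral, normalized by $\epsilon^2/2$, tends to $0$ for a.e.\ $\tau$; this is precisely the single-$\gamma$ version of \eqref{lxgzg}.

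The genuinely new ingredient is that the convergence must be made to hold after integrating against $\lambda(d\gamma)$, and along a \emph{single} sequence $\{\epsilon_n\}$ working for a.e.\ $\tau$ simultaneously. I would handle this by a dominated-convergence argument in $\gamma$ combined with a diagonal/subsequence extraction in $\epsilon$. First, define
\[
G_\epsilon(\tau):=\int_\Gamma \frac{1}{\epsilon^2}\int_\tau^{\tau+\epsilon}\!\!\int_\tau^{t}\dbE\big|\mathcal D_s\varphi_\gamma(t)-\nabla\varphi_\gamma(s)\big|_{\wt H}^2\,ds\,dt\,\lambda(d\gamma).
\]
Using the uniform (in $\gamma$, since $\varphi_\gamma\in\dbL_{2,\dbF}^{1,2}(\wt H)$ with, one may assume, a bound uniform over the relevant compact set of parameters, or simply the finiteness of $\int_\Gamma\|\varphi_\gamma\|^2\lambda(d\gamma)$) bound $\frac{1}{\epsilon^2}\int_\tau^{\tau+\epsilon}\!\!\int_\tau^{t}\dbE|\cdots|^2\,ds\,dt \le C\big(\sup_{|s-t|\le\epsilon}\dbE|\mathcal D_s\varphi_\gamma(t)|^2+\dbE|\nabla\varphi_\gamma(s)|^2\big)$, which is $\lambda$-integrable and, for a.e.\ $\tau$, tends to $0$ pointwise in $\gamma$ as $\epsilon\to0$ by the single-$\gamma$ result above; the dominated convergence theorem then yields $G_\epsilon(\tau)\to0$ as $\epsilon\to0$ for a.e.\ $\tau$. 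Finally, to replace the continuum limit $\epsilon\to0$ by a sequence that is simultaneously good for a.e.\ $\tau$, I would invoke $\int_0^T G_\epsilon(\tau)\,d\tau\to0$ (by Fubini and dominated convergence on $[0,T]\times\Gamma$, the time integral of the dominating bound being finite because $\varphi_\gamma,\nabla\varphi_\gamma\in L^2$), so $G_{\epsilon}\to0$ in $L^1(0,T)$, and then extract a subsequence $\epsilon_n\to0^+$ along which $G_{\epsilon_n}(\tau)\to0$ for a.e.\ $\tau$.

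The main obstacle I anticipate is making the diagonal-continuity step rigorous and uniform in $\gamma$: one must be careful that the ``trace'' $\nabla\varphi_\gamma(s)$ is well-defined as an element of $L^2$ (not merely the Malliavin derivative at a single time, which is a measure-zero slice), and that the near-diagonal continuity of $(s,t)\mapsto\mathcal D_s\varphi_\gamma(t)$ holds with a modulus that can be controlled integrably over $\gamma$ — this is exactly the role played by the hypothesis $\varphi_\gamma\in\dbL_{2,\dbF}^{1,2}(\wt H)$ and (in the applications) by the uniformity built into (A8). A secondary technical point is the order-of-integration interchange between $\int_\Gamma\lambda(d\gamma)$, $\dbE$, and the deterministic double integral, which is justified by Tonelli since everything is nonnegative; and the passage from $L^1(0,T)$ convergence to a.e.\ convergence along a subsequence is the only place the particular sequence $\{\epsilon_n\}$ (rather than the full limit) is genuinely needed, mirroring the statement of the lemma.
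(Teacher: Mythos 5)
Your proposal follows, in its load-bearing steps, essentially the same route as the paper: control the normalized inner double integral by the near-diagonal discrepancy of the Malliavin derivative, pass the $\gamma$-integral through the limit by a convergence theorem, rearrange by Fubini/Tonelli, and obtain the sequence $\{\epsilon_n\}$ by extracting an a.e.-convergent subsequence from the $L^1(0,T)$ convergence of $G_\epsilon$. The paper does exactly this: it bounds $\frac{1}{\epsilon^2}\int_0^T\int_\tau^{\tau+\epsilon}\int_\tau^t(\cdots)\,ds\,dt\,d\tau$ by $\int_0^T\sup_{t\in[\tau,\tau+\epsilon]\cap[0,T]}\mathbb{E}\,|\mathcal{D}_\tau\varphi_\gamma(t)-\nabla\varphi_\gamma(\tau)|_{\wt H}^2\,d\tau$, lets this tend to $0$ by the definition of the space, interchanges $\lim_{\epsilon\to0^+}$ with $\int_\Gamma$, applies Fubini, and concludes (leaving the final subsequence extraction implicit, which you state explicitly). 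So your last paragraph is in substance the paper's proof.

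Two places where your justification is weaker than the paper's and needs repair. First, $\mathbb{L}_{2,\mathbb{F}}^{1,2}(\wt H)$ does \emph{not} give continuity of $(s,t)\mapsto\mathbb{E}|\mathcal{D}_s\varphi_\gamma(t)|_{\wt H}^2$ near the diagonal, and $\nabla\varphi_\gamma(s)$ is not the diagonal trace $\mathcal{D}_s\varphi_\gamma(s)$: by the paper's definition $\nabla\varphi=\mathcal{D}^+\varphi+\mathcal{D}^-\varphi$, and $\mathcal{D}^{\pm}\varphi$ are characterized only through the integrated one-sided property $\lim_{\epsilon\to0^+}\int_0^T g_\epsilon(s)\,ds=0$, with $g_\epsilon(s)$ the near-diagonal supremum of $\mathbb{E}|\mathcal{D}_s\varphi(t)-\mathcal{D}^{\pm}\varphi(s)|^2$. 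Your fixed-$\gamma$, fixed-$\tau$ convergence should therefore be deduced from this defining property (e.g.\ using that $g_\epsilon(s)$ is nonincreasing as $\epsilon\downarrow0$), not from classical Lebesgue differentiation of a ``continuous'' integrand; moreover the exceptional $\tau$-null set then depends on $\gamma$, so converting ``for each $\gamma$, a.e.\ $\tau$'' into ``for a.e.\ $\tau$, $\lambda$-a.e.\ $\gamma$'' requires a Fubini argument on $[0,T]\times\Gamma$ before your dominated convergence in $\gamma$ can be applied. Second, that dominated convergence needs a $\lambda$-integrable dominating function which the lemma's hypothesis does not literally supply; the paper avoids this by using monotone convergence, exploiting that $\int_0^T\sup_{t\in[\tau,\tau+\epsilon]\cap[0,T]}\mathbb{E}|\mathcal{D}_\tau\varphi_\gamma(t)-\nabla\varphi_\gamma(\tau)|_{\wt H}^2\,d\tau$ decreases as $\epsilon\downarrow0$, so only finiteness at one $\epsilon$ is needed (the $\gamma$-uniformity required in the application comes from (A8)). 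Neither point is fatal, since your final, $\tau$-integrated argument bypasses the pointwise step entirely, but as written the middle step is not justified by the stated hypotheses.
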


\begin{proof}
    By exchange the order of integrals, displacement of intervals, substitution of variables, it can be proven by direct calculus 
    that for any $\gamma\in\Gamma$,
    \begin{align}\label{der-phi-nab-phi=0pf1}
    & \lim_{\epsilon\to0^+}  \! \frac{1}{\epsilon^2} \! \int_0^T \! \! \int_\tau^{\tau+\epsilon} \! \! \! \int_\tau^t\mathbb{E}   \left|\mathcal{D}_s\varphi_\gamma(t)-\nabla\varphi_\gamma(s)\right|_{\wt  H}^2  {d}s {d}t {d}\tau   \le \lim_{\epsilon\to0^+} \! \int_0^T   \! \! \sup_{t\in[\tau,\tau+\epsilon]\cap[0,T]} \! \mathbb{E}\left|\mathcal{D}_\tau\varphi_\gamma(t)-\nabla\varphi_\gamma(\tau)\right|_{\wt  H}^2  {d}\tau   \\
    &\indent =0,  \notag 
    \end{align}
    where the last equality can be deduced from the definition of $\nabla\varphi_\gamma(\cdot)$.
It further derives that 
    \begin{align}
    & \lim_{\epsilon\to0^+}\frac{1}{\epsilon^2}\int_\Gamma \int_0^T \int_\tau^{\tau+\epsilon}\int_\tau^t\mathbb{E}   \left|\mathcal{D}_s\varphi_\gamma(t)-\nabla\varphi_\gamma(s)\right|_{\wt  H}^2  {d}s {d}t {d}\tau \lambda( {d}\gamma)  \notag 
    \\
    &\indent  \overset{\eqref{der-phi-nab-phi=0pf1}}{ \le} \lim_{\epsilon\to0^+}\int_\Gamma\int_0^T \sup_{t\in[\tau,\tau+\epsilon]\cap[0,T]}\mathbb{E}\left|\mathcal{D}_\tau\varphi_\gamma(t)-\nabla\varphi_\gamma(\tau)\right|_{\wt  H}^2  {d}\tau \lambda( {d}\gamma)  \notag  
    \\
    &\indent   = \int_\Gamma\lim_{\epsilon\to0^+}\int_0^T \sup_{t\in[\tau,\tau+\epsilon]\cap[0,T]}\mathbb{E}\left|\mathcal{D}_\tau\varphi_\gamma(t)-\nabla\varphi_\gamma(\tau)\right|_{\wt  H}^2  {d}\tau \lambda( {d}\gamma)  \label{der-phi-nabruiuy-phi=0pf1b}  
 \overset{\eqref{der-phi-nab-phi=0pf1}}{=} 0,   
    \end{align}
    where the first equality in \eqref{der-phi-nabruiuy-phi=0pf1b} comes from the Lebesgue monotone convergence theorem, 
    since 
    \[\int_0^T\mathop{\sup}_{t\in[\tau,\tau+\epsilon ]\cap[0,T]}\mathbb{E}\left|\mathcal{D}_\tau\varphi_\gamma(t)-\nabla\varphi_\gamma(\tau)\right|_{\wt  H}^2 {d}\tau\]
    is non-negative and decreases as $\epsilon \to0^+$ for any $\gamma\in\Gamma$.
    By~\eqref{der-phi-nabruiuy-phi=0pf1b} and Fubini theorem, we obtain  
    \begin{equation*}
    \begin{aligned}
    & \lim_{\epsilon \to0^+}\frac{1}{\epsilon^2} \int_0^T \int_\Gamma\int_\tau^{\tau+\epsilon }\int_\tau^t\mathbb{E}   \left|\mathcal{D}_s\varphi_\gamma(t)-\nabla\varphi_\gamma(s)\right|_{\wt  H}^2  {d}s {d}t \lambda( {d}\gamma) {d}\tau =0.
    \end{aligned}
    \end{equation*}
    Then \eqref{lxgzg} follows.
\end{proof}

The following lemma is about a  version of multiple Lebesgue differentiation theorem under the model uncertainty context. 
\begin{lemma}\label{ldgsj}
Let $\phi_\gamma (\cdot),  \psi_\gamma (\cdot)\in  L_{\dbF}^{2}(0,T;H)$ for any $\g \in \Gamma $.  
Then for any $\lambda \in \Lambda $ and for a.e. $\t\in  [0,T)$,
\begin{align}\label{ldgsjg1}
& \lim_{\epsilon \to 0^+}\frac{1}{\epsilon^2}\int_\Gamma \dbE\int_{\t}^{\tau +\epsilon }\Big\langle\phi_\gamma (\t),  \int_{\t}^{t} e^{A(t-s)}\psi_\gamma (s)ds\Big\rangle_{H}  dt \lambda (d\g) =\frac{1}{2}\int_\Gamma \dbE\langle \phi_\gamma (\t),\psi_\gamma (\t)\rangle_{H}\lambda (d\g),
\\ 
& \label{ldgsjg2}  
\lim_{\epsilon\to 0^+}\frac{1}{\epsilon^2}\int_\Gamma \dbE\int_{\t}^{\tau +\epsilon } \Big\langle\phi_\gamma (t), \int_{\t}^{t}e^{A(t-s)}  \psi_\gamma (s)ds \Big\rangle_{H} dt \lambda (d\g) =\frac{1}{2}\int_\Gamma \dbE\langle\phi_\gamma (\t),\psi_\gamma (\t)\rangle_{H}\lambda (d\g).
\end{align}
\end{lemma}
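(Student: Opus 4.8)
\textbf{Proof proposal for Lemma~\ref{ldgsj}.}
The plan is to prove \eqref{ldgsjg1} first and then observe that \eqref{ldgsjg2} follows by the same reasoning plus a small extra estimate controlling $\phi_\gamma(t)-\phi_\gamma(\tau)$. The central idea is the classical Lebesgue differentiation argument, but applied after integrating over $\Gamma$ against $\lambda$, so the main work is to produce $L^1$-in-$\tau$ control of all the error terms \emph{uniformly enough} that one limit in $\epsilon$ can be extracted along a sequence for $\lambda$-a.e.\ simultaneity. First I would fix $\lambda\in\Lambda$ and write, for each $\gamma$,
\[
\frac{1}{\epsilon^2}\,\dbE\int_\tau^{\tau+\epsilon}\Big\langle \phi_\gamma(\tau),\int_\tau^t e^{A(t-s)}\psi_\gamma(s)\,ds\Big\rangle_H dt
= \frac{1}{\epsilon^2}\,\dbE\int_\tau^{\tau+\epsilon}\Big\langle \phi_\gamma(\tau),\int_\tau^t e^{A(t-s)}\psi_\gamma(\tau)\,ds\Big\rangle_H dt + R^\epsilon_\gamma(\tau),
\]
where $R^\epsilon_\gamma(\tau)$ collects the replacement of $\psi_\gamma(s)$ by $\psi_\gamma(\tau)$. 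For the main term, since $e^{A(t-s)}\psi_\gamma(\tau)\to\psi_\gamma(\tau)$ in $H$ as $t-s\to0$ uniformly on the shrinking simplex (strong continuity of the $C_0$-semigroup at the identity), one gets $\frac{1}{\epsilon^2}\int_\tau^{\tau+\epsilon}\int_\tau^t e^{A(t-s)}\psi_\gamma(\tau)\,ds\,dt\to\frac12\psi_\gamma(\tau)$, which after pairing with $\phi_\gamma(\tau)$ and integrating over $\Gamma$ gives the claimed right-hand side — provided dominated convergence applies across $\Gamma$, which it does because $|e^{A(t-s)}|_{\cL(H)}\le M e^{\omega T}$ makes the integrand bounded by $C|\phi_\gamma(\tau)|_H|\psi_\gamma(\tau)|_H$, integrable in $(\gamma,\tau)$ by Cauchy--Schwarz and $\phi_\gamma,\psi_\gamma\in L^2_\dbF(0,T;H)$.

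For the error term $R^\epsilon_\gamma(\tau)$, I would bound
\[
|R^\epsilon_\gamma(\tau)|\le \frac{C}{\epsilon^2}\int_\tau^{\tau+\epsilon}\!\int_\tau^t \dbE\big(|\phi_\gamma(\tau)|_H\,|\psi_\gamma(s)-\psi_\gamma(\tau)|_H\big)\,ds\,dt
\le \frac{C}{\epsilon}\,\big(\dbE|\phi_\gamma(\tau)|_H^2\big)^{1/2}\Big(\int_\tau^{\tau+\epsilon}\dbE|\psi_\gamma(s)-\psi_\gamma(\tau)|_H^2\,ds\Big)^{1/2},
\]
after using $|e^{A(t-s)}-I|_{\cL(H)}\le C$ on the main part and strong continuity on a further decomposition if one wants the $|e^{A(t-s)}-I|$ factor to help — but the crude bound above already suffices. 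Then $\int_0^T|R^\epsilon_\gamma(\tau)|\,d\tau$ is controlled, via Cauchy--Schwarz in $\tau$, by $\|\phi_\gamma\|_{L^2_\dbF(0,T;H)}$ times $\big(\frac{1}{\epsilon}\int_0^T\int_\tau^{\tau+\epsilon}\dbE|\psi_\gamma(s)-\psi_\gamma(\tau)|_H^2\,ds\,d\tau\big)^{1/2}$, and the bracketed quantity tends to $0$ as $\epsilon\to0^+$ by the standard one-dimensional Lebesgue differentiation/continuity-in-mean property of $\psi_\gamma\in L^2(0,T;L^2(\Omega;H))$. Integrating over $\Gamma$ and using that $\|\psi_\gamma\|_{L^2_\dbF(0,T;H)}$ and $\|\phi_\gamma\|_{L^2_\dbF(0,T;H)}$ are $\lambda$-integrable (or at least that the $\gamma$-integrand is dominated), dominated convergence gives $\int_\Gamma\int_0^T|R^\epsilon_\gamma(\tau)|\,d\tau\,\lambda(d\gamma)\to0$. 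Hence the function $\tau\mapsto \int_\Gamma R^\epsilon_\gamma(\tau)\,\lambda(d\gamma)$ converges to $0$ in $L^1(0,T)$, so along a subsequence $\epsilon_n\to0^+$ it converges to $0$ for a.e.\ $\tau$; combined with the main-term limit (which holds for a.e.\ $\tau$ by the Lebesgue point property applied to the $\lambda$-integrated functions $\tau\mapsto\int_\Gamma\dbE\langle\phi_\gamma(\tau),\psi_\gamma(\tau)\rangle_H\lambda(d\gamma)$ and to $\tau\mapsto\int_\Gamma\dbE(|\phi_\gamma(\tau)|_H|\psi_\gamma(\tau)|_H)\lambda(d\gamma)$, both in $L^1(0,T)$), \eqref{ldgsjg1} follows for a.e.\ $\tau$.

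For \eqref{ldgsjg2} the only change is that $\phi_\gamma(t)$ appears in the outer slot instead of $\phi_\gamma(\tau)$; writing $\phi_\gamma(t)=\phi_\gamma(\tau)+(\phi_\gamma(t)-\phi_\gamma(\tau))$ reduces it to \eqref{ldgsjg1} plus the extra term $\frac{1}{\epsilon^2}\dbE\int_\tau^{\tau+\epsilon}\langle \phi_\gamma(t)-\phi_\gamma(\tau),\int_\tau^t e^{A(t-s)}\psi_\gamma(s)\,ds\rangle_H dt$, which is estimated exactly as $R^\epsilon_\gamma(\tau)$ above (now with the roles of $\phi$ and $\psi$ swapped) and is handled by the same continuity-in-mean/dominated-convergence machinery. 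I expect the main obstacle to be purely bookkeeping: making the subsequence extraction legitimate \emph{uniformly in $\gamma$}, i.e.\ checking that every error quantity, after integration over $\Gamma$ against $\lambda$, genuinely lies in $L^1(0,T)$ and tends to $0$ there, so that a single null set of $\tau$ (and a single sequence $\epsilon_n$) works for the $\lambda$-integrated statement; this hinges on the uniform semigroup bound $|e^{At}|_{\cL(H)}\le Me^{\omega T}$ on $[0,T]$ and on Cauchy--Schwarz to pull the $\phi_\gamma$ factors out, after which everything is reduced to the classical scalar Lebesgue differentiation theorem and Fubini.
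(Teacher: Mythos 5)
Your decomposition (freeze $\psi_\gamma$ at $\tau$, use strong continuity of $e^{At}$ for the frozen term, show the remainder vanishes) is the same in spirit as the paper's, but the way you dispose of the remainder has a genuine gap: you establish only that $\tau\mapsto\int_\Gamma R^\epsilon_\gamma(\tau)\,\lambda(d\gamma)$ tends to $0$ in $L^1(0,T)$ and then extract a subsequence $\epsilon_n$ along which it converges for a.e.\ $\tau$. The lemma asserts the full limit $\lim_{\epsilon\to0^+}$ for a.e.\ $\tau$, and it is invoked in that form (e.g.\ for $\lim_{\epsilon\to0}\epsilon^{-2}I_2(\epsilon)$ and $\epsilon^{-2}I_{6b}(\epsilon)$ in Section 7); $L^1$ convergence only yields a.e.\ convergence along a subsequence, with the exceptional null set depending on that subsequence, so your argument proves a strictly weaker statement. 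The repair is to keep the estimate pointwise in $\tau$, which is exactly the paper's route: by Cauchy--Schwarz one gets
$\big|\int_\Gamma R^\epsilon_\gamma(\tau)\lambda(d\gamma)\big|\le C\,\sup_{\gamma\in\Gamma}\big(\mathbb{E}|\phi_\gamma(\tau)|_H^2\big)^{1/2}\sup_{t\in(\tau,\tau+\epsilon)}\big(\tfrac{1}{t-\tau}\int_\tau^t\mathbb{E}\int_\Gamma|e^{A(t-s)}\psi_\gamma(s)-\psi_\gamma(\tau)|_H^2\,\lambda(d\gamma)\,ds\big)^{1/2}$,
and the inner average tends to $0$ for a.e.\ $\tau$ by the Lebesgue differentiation theorem (combined with the semigroup's strong continuity), giving the full limit without any subsequence. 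Note the paper also has to justify the finiteness of the $\sup_\gamma$ factor (via (A8) and a countable dense subset of $\Gamma$); your ``dominated in $\gamma$'' caveat quietly assumes an integrability over $\Gamma$ that the lemma's stated hypotheses do not provide, so this point needs to be made explicit whichever route you take.

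There is also an arithmetic slip in your bound on $R^\epsilon_\gamma(\tau)$: starting from $\tfrac{C}{\epsilon^2}\int_\tau^{\tau+\epsilon}\int_\tau^t\mathbb{E}\big(|\phi_\gamma(\tau)|_H|\psi_\gamma(s)-\psi_\gamma(\tau)|_H\big)ds\,dt$, Cauchy--Schwarz in $s$ produces a factor $(t-\tau)^{1/2}\le\epsilon^{1/2}$, so the correct prefactor is $C\epsilon^{-1/2}$, i.e.\ $|R^\epsilon_\gamma(\tau)|\le C\big(\mathbb{E}|\phi_\gamma(\tau)|_H^2\big)^{1/2}\big(\tfrac1\epsilon\int_\tau^{\tau+\epsilon}\mathbb{E}|\psi_\gamma(s)-\psi_\gamma(\tau)|_H^2ds\big)^{1/2}$, not the $C\epsilon^{-1}(\cdots)^{1/2}$ you wrote. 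With your stated prefactor, the $\tau$-integrated error is $C\big(\epsilon^{-2}\int_0^T\int_\tau^{\tau+\epsilon}\mathbb{E}|\psi_\gamma(s)-\psi_\gamma(\tau)|_H^2\,ds\,d\tau\big)^{1/2}$, and the continuity-in-mean estimate only gives $\int_0^T\int_\tau^{\tau+\epsilon}\mathbb{E}|\psi_\gamma(s)-\psi_\gamma(\tau)|_H^2\,ds\,d\tau=o(\epsilon)$, which does not force that quantity to vanish; the bracketed expression you claim tends to $0$ carries the normalization $\epsilon^{-1}$, so your chain is internally off by $\sqrt{\epsilon}$. Once the sharper bound is in place, it both fixes this mismatch and, applied pointwise at Lebesgue points of $s\mapsto\psi_\gamma(s)\in L^2(\Omega;H)$, yields the a.e.\ limit directly, making the $L^1$/subsequence detour unnecessary. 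Your treatment of \eqref{ldgsjg2} by splitting off $\phi_\gamma(t)-\phi_\gamma(\tau)$ coincides with the paper's and is fine once the above corrections are made.
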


\begin{proof}
From the fact that 
\begin{align*}
    \frac{1}{\epsilon^2} \int_\Gamma \mathbb{E}\int_\tau^{\tau+\epsilon}
    \Big\langle\phi_\gamma(\tau),\int_\tau^t\psi_\gamma(\tau) {d}s\Big\rangle_H {d}t\lambda ( {d}\gamma)
    =\frac{1}{2}\int_\Gamma \mathbb{E}{\big\langle\phi_\gamma(\tau),\psi_\gamma(\tau)\big\rangle}_H \lambda ( {d}\gamma) ,
\end{align*}
to prove  \eqref{ldgsjg1}, 
it is sufficient to prove 
\begin{align}\label{mllfa}
    \lim_{\epsilon\rightarrow0^+} \frac{1}{\epsilon^2} \Big| \int_\Gamma \mathbb{E}\int_\tau^{\tau+\epsilon}
 \Big\langle\phi_\gamma(\tau),\int_\tau^t e^{A(t-s)} \psi_\gamma(s)-\psi_\gamma(\tau) {d}s\Big\rangle_H {d}t\lambda ( {d}\gamma) \Big|=0.
\end{align}
Actually, 
\begin{align*}
 &   \lim_{\epsilon\rightarrow0^+} \frac{1}{\epsilon^2} \Big| \int_\Gamma \mathbb{E}\int_\tau^{\tau+\epsilon}
{ \Big\langle\phi_\gamma(\tau),\int_\tau^t e^{A(t-s)} \psi_\gamma(s)-\psi_\gamma(\tau) {d}s \Big\rangle}_H {d}t\lambda ( {d}\gamma) \Big|
 \\  & \indent 
 \le 
 \lim_{\epsilon\rightarrow0^+} \frac{1}{\epsilon^2}  \int_\Gamma 
 \int_\tau^{\tau+\epsilon} \mathbb{E} \Big(  |\phi_\gamma(\tau)|_H \int_\tau^t  \big| e^{A(t-s)}\psi_\gamma(s)-\psi_\gamma(\tau) \big|_H  {d}s \Big)  {d}t  \lambda ( {d}\gamma)  
 \\  &  \indent 
 \le 
    \lim_{\epsilon\rightarrow0^+} \frac{1}{ \epsilon^2} \sup_{\gamma \in \Gamma }(\mathbb{E}|\phi_\gamma(\tau)|^2_H)^{\frac{1}{2} }  \int_\tau^{\tau+\epsilon} (t-\tau)^{\frac{1}{2} } \Big( \int_\tau^t \mathbb{E} \int_\Gamma \big| e^{A(t-s)}\psi_\gamma(s)-\psi_\gamma(\tau) \big|^2_H \lambda ( {d}\gamma)  \mathrm{d}s \Big)^{\frac{1}{2} } {d}t   
    \\  &  \indent 
    \le  \lim_{\epsilon\rightarrow0^+} \frac{1}{ 2} \sup_{\gamma \in \Gamma } 
    (\mathbb{E}|\phi_\gamma(\tau)|^2_H)^{\frac{1}{2} }  \sup_{t\in (\tau,\tau+\epsilon)} \Big(\frac{1}{t-\tau } \int_\tau^t \mathbb{E} \int_\Gamma \big| e^{A(t-s)}\psi_\gamma(s)-\psi_\gamma(\tau) \big|^2_H \lambda ( {d}\gamma) \mathrm{d}s \Big)^{\frac{1}{2} } 
\end{align*}
where we have utilized the continuity of 
$ \int_\tau^t \mathbb{E} \int_\Gamma |e^{A(t-s)}\psi_\gamma(s)-\psi_\gamma(\tau) |^2_H \lambda ( {d}\gamma) \mathrm{d}s $
in $t$ due to the absolutely continuous of Lebesgue integration, and used the existence of the limit 
\begin{align}\label{lsjj}
    \lim_{t\to \tau^+}  \frac{1}{t-\tau } \int_\tau^t \mathbb{E} \int_\Gamma \big|e^{A(t-s)}\psi_\gamma(s)-\psi_\gamma(\tau)\big|^2_H \lambda ( {d}\gamma) {d}s  =0 \quad    a.e. \  \tau \in [0,T)
\end{align}
by Lebesgue differentiation theorem, both of which ensures the well-posedness of 
\begin{align}\label{mosulep}
    \sup_{t\in (\tau,\tau+\epsilon)} \Big\{\frac{1}{t-\tau } \int_\tau^t \mathbb{E} \int_\Gamma \big| e^{A(t-s)}\psi_\gamma(s)-\psi_\gamma(\tau) \big|^2_H \lambda ( {d}\gamma)  {d}s \Big\}.
\end{align}
Besides, the value of the term \eqref{mosulep} decreases as $\epsilon \to 0^+$, and the supremum in 
$\sup_{\gamma \in \Gamma } 
    (\mathbb{E}|\phi_\gamma(\tau)|^2_H)^{\frac{1}{2} }$ can be taken in a countable dense subset of $\Gamma$ to eliminate the common null measure set of $\tau \in [0,T]$ with proof by contradiction, owing to the continuity of $\phi_\gamma(\cdot)$ with respect to $\gamma$ in the moment sense with any high order from (A8). 
Together with \eqref{lsjj},  \eqref{mllfa} is proved, so is  \eqref{ldgsjg1}.

Now we are in the position to prove   \eqref{ldgsjg2}. Provided with  \eqref{ldgsjg1}, it is sufficient to prove  
\begin{align*}
    & \lim_{\epsilon\rightarrow0^+} \Big|\frac{1}{\epsilon^2} \int_\Gamma \mathbb{E}\int_\tau^{\tau+\epsilon}
     \Big\langle\phi_\gamma(t)-\phi_\gamma(\tau ),\int_\tau^t e^{A(t-s)} \psi_\gamma(s) {d}s\Big\rangle_H {d}t\lambda( {d}\gamma) \Big| =0. 
\end{align*}
By H\"{o}lder's inequality, Tonelli's theorem, and Lebesgue differentiation theorem, it holds that 
\begin{align*}
   & \lim_{\epsilon\rightarrow0^+} \Big|\frac{1}{\epsilon^2} \int_\Gamma \mathbb{E}\int_\tau^{\tau+\epsilon}
    \Big\langle\phi_\gamma(t)-\phi_\gamma(\tau ),\int_\tau^t e^{A(t-s)}\psi_\gamma(s) {d}s\Big\rangle_H {d}t\lambda ( {d}\gamma) \Big| 
    \\ & \quad 
    \le \lim_{\epsilon\rightarrow0^+}  \frac{1}{\epsilon^2} \int_\tau^{\tau+\epsilon} \int_\Gamma \mathbb{E} \Big( 
    \big|\phi_\gamma(t)-\phi_\gamma(\tau ) \big|_H  \Big| \int_\tau^t e^{A(t-s)} \psi_\gamma(s) {d}s \Big|_H\Big)  \lambda ( {d}\gamma)   {d}t 
    \\ & \quad 
    \le \lim_{\epsilon\rightarrow0^+}  \frac{1}{\epsilon^2} \Big(\int_\tau^{\tau+\epsilon} \int_\Gamma \mathbb{E} 
    \big|\phi_\gamma(t)-\phi_\gamma(\tau ) \big|_H^2 \lambda ( {d}\gamma)  {d}t\Big)^{\frac{1}{2}} \Big( \int_\tau^{\tau+\epsilon} \int_\Gamma \mathbb{E}  \Big| \int_\tau^t e^{A(t-s)} \psi_\gamma(s) {d}s \Big|_H^2 \lambda ( {d}\gamma)  {d}t \Big)^{\frac{1}{2} }
    \\ & \quad 
    \le \lim_{\epsilon\rightarrow0^+}  \frac{1}{ \sqrt{2} \epsilon} \Big(\int_\tau^{\tau+\epsilon} \int_\Gamma \mathbb{E} 
    \big|\phi_\gamma(t)-\phi_\gamma(\tau ) \big|_H^2 \lambda ( {d}\gamma)  {d}t\Big)^{\frac{1}{2}} 
    \Big( \int_\tau^{\tau+\epsilon} \int_\Gamma \mathbb{E} |\psi_\gamma(s)|_H^2 \lambda ( {d}\gamma) {d}s  \Big)^{\frac{1}{2} }
    \\ & \quad 
    =0. 
\end{align*}
\end{proof}

Note that by  (A8), for any $v\in U$, it follows  $\dbS_\gamma (t)^{*}(v-\bar{u}(t))\in \dbL_{2,\dbF}^{1,2}(H)$ and
\begin{equation}\label{cofp}
\dbS_\gamma (t)^{*}(v-\bar{u}(t)) =\dbE  \big[\dbS_\gamma (t)^{*}(v-\bar{u}(t))\big] +\int_{0}^{t} \dbE\big[\cal D_{s}\(\dbS_\gamma (t)^{*}(v-\bar{u}(t))\)  \big|  \cal F_{s}\big]  dW(s),\quad \dbP\mbox{-}a.s.
\end{equation}
Provided with the above preparations, we would perform  density arguments to present 
\begin{proof}[The proof of Theorem \ref{fmrt}]
Since $\{\cF_t\}_{t\ge0}$ is the natural filtration generated by the standard Brownian motion augmented by all the $\mathbb{P}$-null sets, one can find a sequence $\{F_l\}_{l=1}^{\infty}\subset\mathcal{F}_t, \forall t\in[0,T]$, such that for any $\O_t\in\mathcal{F}_t$, there exists a sub-sequence $\{F_{l_i}\}_{i=1}^{\infty}\subset\{F_l\}_{l=1}^{\infty}$,
satisfying
$\lim_{i\to\infty}\mathbb{P}(\O_t\vartriangle F_{l_i})=0$,
where $\O_t\vartriangle F_{l_i}:=(\O_t\backslash F_{l_i})\cup(F_{l_i}\backslash \O_t)$.
Denote by $\{t_i\}_{i=1}^\infty$ the sequence of rational numbers in $[0,T)$, by $\{v^k\}_{k=1}^\infty$ a dense subset of $U$. For any $i\in \mathbb{N}$, take a sequence $\{F_{ij}\}_{j=1}^{\infty}(\subset\mathcal{F}_{t_i})$ which can generate $\mathcal{F}_{t_i}$.
Fix $i,j,k$ arbitrarily.
For any $\tau\in[t_i,T)$ and $\epsilon \in(0,T-\tau)$, denote $E_\epsilon^i=[\tau,\tau+\epsilon )$,
\begin{equation*}
u_{ijk}^{\epsilon }(t,\omega)=
\left\{
\begin{aligned}
   v^k,  \indent\quad& (t,\omega)\in  E_\epsilon^i\times F_{ij},   \\
\bar{u}(t,\omega),  \indent &  (t,\omega)\in ([0,T]\times\Omega)\backslash (E_\epsilon^i\times F_{ij}),
\end{aligned}
\right.
\end{equation*}
and
\[v_{ijk}^{\epsilon }(t,\omega)=u_{ijk}^{\epsilon }(t,\omega)-\bar{u}(t,\omega) =(v^k-\bar{u}(t,\omega))\chi_{F_{ij}}(\omega)\chi_{E_\epsilon^i}(t),\quad (t,\omega)\in[0,T]\times\Omega.\]
Clearly $u_{ijk}^{\epsilon }(\cdot), v_{ijk}^{\epsilon }(\cdot)\in\mathcal{U}^4[0,T]$.
Denote by $y_{\gamma}^{ijk}$ the solution to \eqref{lisef} with $\d u(\cdot)$ replaced by $v_{ijk}^{\epsilon }(\cdot)$, recalling  \eqref{lbyxfg}, then actually
\begin{equation}\label{lbyxfe}
    \begin{aligned} 
    y_{\gamma,\epsilon }^{ijk}(t) = & \int_{0}^{t} e^{A(t-s)} \big( \partial_{x}a_\gamma[s] y_{\gamma,\epsilon }^{ijk} (s)+  \partial_{u}a_\gamma[s] (v^k-\bar{u}(s))\chi_{F_{ij}}(\omega)\chi_{E_\epsilon^i}(s) \big)ds 
    \\
    & + \int_{0}^{t}  e^{A(t-s)}  \big(\partial_{x}b_\gamma[s] y_{\gamma,\epsilon }^{ijk} (s)  + \partial_{u}b_\gamma[s] (v^k-\bar{u}(s))\chi_{F_{ij}}(\omega)\chi_{E_\epsilon^i}(s) \big) dW(s),
    \\
    & \quad \dbP\mbox{-}a.s.  \quad  t\in [0,T]. 
    \end{aligned}
\end{equation}
Moreover, replacing $\d u(\cdot)$ by $v_{ijk}^{\epsilon}(\cdot)$ in \eqref{bsyuiegz} results in
\begin{equation}\label{initial-second-necessary-condition-yijk}
\int_{\Gamma}  \mathbb{E}\int_\tau^{\tau+\epsilon } \big\langle  y_{\gamma,\epsilon }^{ijk}(t), \mathbb{S}_\gamma(t)^* (v^k-\bar{u}(t) )\big\rangle_H \chi_{F_{ij}} {d}t  \lambda^*( {d}\gamma)\le0.
\end{equation}
Then plug \eqref{lbyxfe} into \eqref{initial-second-necessary-condition-yijk} (in which $y_{\gamma,\epsilon }^{ijk}$ is regarded as zero for $t\in[0,\t)$) deriving 
\begin{equation}\label{nic123}
    I_1(\epsilon)+I_2(\epsilon )+I_3(\epsilon)  \le 0 ,
\end{equation}
where 
\begin{align*}
    I_1(\epsilon )&= \int_{\Gamma}  \mathbb{E}\int_\tau^{\tau+\epsilon } \Big\langle  \int_{\tau }^{t} e^{A(t-s)}   \partial_{x}a_\gamma[s] y_{\gamma,\epsilon }^{ijk} (s)   ds ,   \mathbb{S}_\gamma(t)^* (v^k-\bar{u}(t) )\Big\rangle_H \chi_{F_{ij}}(\omega) {d}t  \lambda^*( {d}\gamma),
\\
    I_2(\epsilon )&= \int_{\Gamma}  \mathbb{E}\int_\tau^{\tau+\epsilon } \Big\langle  \int_{\tau }^{t} e^{A(t-s)}   \partial_{u}a_\gamma[s] (v^k-\bar{u}(s))\chi_{F_{ij}}(\omega) ds ,  \\ & \indent\indent\indent\indent\indent\indent\indent\indent\indent\indent \mathbb{S}_\gamma(t)^* (v^k-\bar{u}(t) )\Big\rangle_H \chi_{F_{ij}}(\omega) {d}t  \lambda^*( {d}\gamma),
\\
    I_3(\epsilon) &= \int_{\Gamma}  \mathbb{E}\int_\tau^{\tau+\epsilon } \Big\langle  \int_{\tau }^{t}  e^{A(t-s)}\big(\partial_{x}b_\gamma[s] y_{\gamma,\epsilon }^{ijk} (s) + \partial_{u}b_\gamma[s] (v^k-\bar{u}(s))\chi_{F_{ij}}(\omega) \big) dW(s), \\
   & \indent\indent\indent\indent\indent \indent\indent\indent\indent\indent\indent\indent \mathbb{S}_\gamma(t)^* (v^k-\bar{u}(t) )\Big\rangle_H \chi_{F_{ij}}(\omega) {d}t  \lambda^*( {d}\gamma).
\end{align*}
For the term $ I_1(\epsilon )$, it can be estimated that 
\begin{align*}
\frac{1}{\epsilon^4} |I_1(\epsilon )|^2
& \le \frac{1}{\epsilon ^4}  \int_{\Gamma}  \mathbb{E}\int_\tau^{\tau+\epsilon } (t-\t) \int_{\tau }^{t} \big| e^{A(t-s)}   \partial_{x}a_\gamma[s] y_{\gamma,\epsilon }^{ijk} (s) \big|^2_H ds   {d}t  \lambda^*( {d}\gamma) 
\\ 
& \indent\indent\indent\indent\indent\indent\indent\indent \cdot \int_{\Gamma}  \mathbb{E}\int_\tau^{\tau+\epsilon } \big| \mathbb{S}_\gamma(t)^* (v^k-\bar{u}(t) )\big|^2_H  {d}t  \lambda^*( {d}\gamma)   
\\
& \le  C \sup_{\g\in\G}\sup_{t\in[0,T]} \dbE |y_{\gamma,\epsilon }^{ijk} (t)|_H^2 \cdot \frac{1}{\epsilon } \int_\tau^{\tau+\epsilon } \int_{\Gamma}  \mathbb{E} \big| \mathbb{S}_\gamma(t)^* (v^k-\bar{u}(t) )\big|^2_H  \lambda^*( {d}\gamma)  {d}t 
\\
& \to 0, \quad \epsilon  \to 0, \quad a.e.\ \t\in[t_i,T), 
\end{align*}
where the following convergence resulting  from Lemma \ref{mtuee} and (A8) is utilized: 
\begin{align}\label{syssxq}
\sup_{\g\in\G}\sup_{t\in[0,T]} \dbE |y_{\gamma,\epsilon }^{ijk} (t)|_H^2 \le C \dbE \int_{0}^{T}  |v^k-\bar{u}(s)|^2_{H_1}\chi_{F_{ij}}(\omega)\chi_{E_\epsilon^i}(s) ds \to 0 \quad \text{as} \ \epsilon \to 0. 
\end{align}
For the term $ I_2(\epsilon )$, by (A3) and Lemma \ref{ldgsj}, it follows that 
\begin{align*}
    \lim_{\epsilon \to0} \frac{1}{\epsilon ^2} I_2(\epsilon ) &=  \lim_{\epsilon \to0} \frac{1}{\epsilon ^2} \int_{\Gamma}  \mathbb{E}\int_\tau^{\tau+\epsilon } \Big\langle  \int_{\tau }^{t} e^{A(t-s)}   \partial_{u}a_\gamma[s] (v^k-\bar{u}(s))\chi_{F_{ij}}(\omega) ds ,  \\ & \indent\indent\indent\indent\indent\indent\indent\indent\indent\indent \mathbb{S}_\gamma(t)^* (v^k-\bar{u}(t) )\Big\rangle_H \chi_{F_{ij}}(\omega) {d}t  \lambda^*( {d}\gamma)  
    \\
    &  {=} \frac{1}{2} \int_{\Gamma}  \mathbb{E} \big(\big\langle \partial_{u}a_\gamma[\tau ] (v^k-\bar{u}(\tau )) ,   \mathbb{S}_\gamma(\tau )^* (v^k-\bar{u}(\tau ) )\big\rangle_H \chi_{F_{ij}}(\omega) \big) \lambda^*( {d}\gamma), \quad a.e.\ \t\in[t_i,T) .
\end{align*}
For the term $ I_3(\epsilon )$, by the Assumption (A8) and the Eq. \eqref{cofp}, it can be estimated that  
\begin{align*}
I_3(\epsilon) &= \int_{\Gamma}  \mathbb{E}\int_\tau^{\tau+\epsilon } \Big\langle  \int_{\tau }^{t}  e^{A(t-s)}\big(\partial_{x}b_\gamma[s] y_{\gamma,\epsilon }^{ijk} (s) + \partial_{u}b_\gamma[s] (v^k-\bar{u}(s))\chi_{F_{ij}}(\omega) \big) dW(s), \\
& \indent\indent   \dbE  \big[\dbS_\gamma(t)^{*}(v^k-\bar{u}(t))\big] +\int_{0}^{t} \dbE\big[\cal D_{s}\big(\dbS_\gamma(t)^{*}(v^k-\bar{u}(t))\big)  \big|  \cal F_{s}\big]  dW(s)\Big\rangle_H \chi_{F_{ij}}(\omega) {d}t  \lambda^*( {d}\gamma) 
   \\
&= \int_{\Gamma}  \mathbb{E}\int_\tau^{\tau+\epsilon } \Big\langle  \int_{\tau }^{t}  e^{A(t-s)}\big(\partial_{x}b_\gamma[s] y_{\gamma,\epsilon }^{ijk} (s) + \partial_{u}b_\gamma[s] (v^k-\bar{u}(s))\chi_{F_{ij}}(\omega) \big) dW(s), \\
& \indent\indent\indent\indent\indent\indent\indent\indent   \int_{0}^{t} \dbE\big[\cal D_{s}\big(\dbS_\gamma(t)^{*}(v^k-\bar{u}(t))\big)  \big|  \cal F_{s}\big]  dW(s)\Big\rangle_H \chi_{F_{ij}}(\omega) {d}t  \lambda^*( {d}\gamma) 
   \\
& = \int_{\Gamma} \int_\tau^{\tau+\epsilon } \int_{\tau }^{t}  \mathbb{E} \Big(\Big\langle  e^{A(t-s)}\big(\partial_{x}b_\gamma[s] y_{\gamma,\epsilon }^{ijk} (s) + \partial_{u}b_\gamma[s] (v^k-\bar{u}(s))\chi_{F_{ij}}(\omega) \big) , \\
& \indent\indent \indent\indent \indent\indent\indent\indent \indent   \cal D_{s}\big(\dbS_\gamma(t)^{*}(v^k-\bar{u}(t))\big)   \Big\rangle_H  \chi_{F_{ij}}(\omega) \Big) ds {d}t  \lambda^*( {d}\gamma)
\\
& =: I_4(\epsilon) + I_5(\epsilon) ,
\end{align*}
where 
\begin{align*}
& I_4(\epsilon) = \int_{\Gamma} \int_\tau^{\tau+\epsilon } \int_{\tau }^{t}  \mathbb{E} \big[{\big\langle  e^{A(t-s)} \partial_{x}b_\gamma[s] y_{\gamma,\epsilon }^{ijk} (s)   ,    \cal D_{s}\big(\dbS_\gamma(t)^{*}(v^k-\bar{u}(t))\big)   \big\rangle}_H  \chi_{F_{ij}}(\omega) \big] ds {d}t  \lambda^*( {d}\gamma), 
\\
& I_5(\epsilon) = \int_{\Gamma} \int_\tau^{\tau+\epsilon } \int_{\tau }^{t}  \mathbb{E} \big[\big\langle  e^{A(t-s)}  \partial_{u}b_\gamma[s] (v^k-\bar{u}(s))\chi_{F_{ij}}(\omega)  , \\
& \indent\indent \indent\indent\indent\indent\indent\indent \indent\indent \indent   \cal D_{s}\big(\dbS_\gamma(t)^{*}(v^k-\bar{u}(t))\big)   \big\rangle_H  \chi_{F_{ij}}(\omega) \big] ds {d}t  \lambda^*( {d}\gamma). 
\end{align*}
For the term $ I_4(\epsilon )$, we see that 
\begin{align*}
\frac{1}{\epsilon^4} |I_4(\epsilon )|^2 & \le  \frac{1}{\epsilon^4}  \int_{\Gamma} \int_\tau^{\tau+\epsilon } \int_{\tau }^{t}  \mathbb{E} \big|  e^{A(t-s)} \partial_{x}b_\gamma[s] y_{\gamma,\epsilon }^{ijk} (s) \big|^2_H   ds {d}t  \lambda^*( {d}\gamma) 
\\ & \indent\indent \cdot \int_{\Gamma} \int_\tau^{\tau+\epsilon } \int_{\tau }^{t} \mathbb{E} \big| \cal D_{s}\big(\dbS_\gamma(t)^{*}(v^k-\bar{u}(t))\big) \big|^2_H  ds {d}t \lambda^*( {d}\gamma)
\\
& \overset{\eqref{syssxq}}{\le}  C \sup_{\g\in\G}\sup_{t\in[0,T]} \dbE |y_{\gamma,\epsilon }^{ijk} (t)|_H^2 \cdot \frac{1}{\epsilon^2}  \int_{\Gamma} \int_\tau^{\tau+\epsilon } \int_{\tau }^{t} \mathbb{E} \big| \cal D_{s}\big(\dbS_\gamma(t)^{*}(v^k-\bar{u}(t))\big) \big|^2_H  ds {d}t \lambda^*( {d}\gamma) 
\\
& \to 0, \quad \epsilon  \to 0,  \quad a.e.\ \t\in[t_i,T).
\end{align*}
For the term $ I_5(\epsilon )$, by (A8), it deduces 
\begin{align}\label{dsvdsv}
\cal D_{s}\(\dbS_\gamma(t)^{*}(v-\bar{u}(t))\)= \cal D_{s} \dbS_\gamma(t)^{*}(v-\bar{u}(t)) -  \dbS_\gamma(t)^{*}\cal D_{s}(v-\bar{u}(t)).
\end{align}
Replacing one term in $I_5(\epsilon )$ using the above equality \eqref{dsvdsv}, it yields  
\[I_5(\epsilon )=:I_6(\epsilon ) - I_7(\epsilon ),\]
where 
\begin{align*}
    & I_6(\epsilon) = \int_{\Gamma} \int_\tau^{\tau+\epsilon } \int_{\tau }^{t}  \mathbb{E} \big[\big\langle  e^{A(t-s)}  \partial_{u}b_\gamma[s] (v^k-\bar{u}(s))\chi_{F_{ij}}(\omega)  , \\
    & \indent\indent \indent\indent\indent\indent\indent\indent \indent\indent \indent   \cal D_{s} \dbS_\gamma(t)^{*}(v^k-\bar{u}(t))   \big\rangle_H  \chi_{F_{ij}}(\omega) \big] ds {d}t  \lambda^*( {d}\gamma), 
    \\
    & I_7(\epsilon) = \int_{\Gamma} \int_\tau^{\tau+\epsilon } \int_{\tau }^{t}  \mathbb{E} \big[\big\langle  e^{A(t-s)}  \partial_{u}b_\gamma[s] (v^k-\bar{u}(s))\chi_{F_{ij}}(\omega)  , \\
    & \indent\indent \indent\indent\indent\indent\indent\indent \indent\indent \indent   \dbS_\gamma(t)^{*}\cal D_{s}(v^k-\bar{u}(t))  \big\rangle_H  \chi_{F_{ij}}(\omega) \big] ds {d}t  \lambda^*( {d}\gamma). 
\end{align*}
To approximate $\mathcal{D}_\cdot\mathbb{S}_\gamma(\cdot)^*$ by $\nabla\mathbb{S}_\gamma(\cdot)^*$ for any $\gamma\in\Gamma$, utilizing the continuity of $\mathcal{D}_\cdot\varphi_\gamma(\cdot)$ on some neighborhood of $\{(t,t)|t\in[0,T]\}$, we make a  simple interpolation and obtain 
\begin{align*}
    I_6(\epsilon) &= \int_{\Gamma} \int_\tau^{\tau+\epsilon } \int_{\tau }^{t}  \mathbb{E} \big[\big\langle  e^{A(t-s)}  \partial_{u}b_\gamma[s] (v^k-\bar{u}(s))\chi_{F_{ij}}(\omega)  , \\
    & \indent\indent \indent\indent\indent\indent\indent\indent \indent  \(\cal D_{s} \dbS_\gamma(t)^{*}-\nabla\mathbb{S}_\gamma(s)^{*}\)(v^k-\bar{u}(t))   \big\rangle_H  \chi_{F_{ij}}(\omega) \big] ds {d}t  \lambda^*( {d}\gamma)  
    \\
    & \indent \indent  + \int_{\Gamma} \int_\tau^{\tau+\epsilon } \int_{\tau }^{t}  \mathbb{E} \big[\big\langle  e^{A(t-s)}  \partial_{u}b_\gamma[s] (v^k-\bar{u}(s))\chi_{F_{ij}}(\omega)  , \\
    & \indent\indent \indent\indent\indent\indent\indent\indent \indent\indent \indent \nabla\mathbb{S}_\gamma(s)^{*}(v^k-\bar{u}(t))   \big\rangle_H  \chi_{F_{ij}}(\omega) \big] ds {d}t  \lambda^*( {d}\gamma)  
    \\
    & =: I_{6a}(\epsilon)+I_{6b}(\epsilon). 
\end{align*}
For the term $I_{6a}(\epsilon)$, it holds that 
\begin{align*}
\frac{1}{\epsilon^4} |I_{6a}(\epsilon )|^2 \le & C 
\frac{1}{\epsilon ^2}  \mathbb{E}  \int_{\Gamma} \int_\tau^{\tau+\epsilon } \int_{\tau }^{t}  |v^k-\bar{u}(s)|^2_{H_1} \cdot |v^k-\bar{u}(t)|^2_{H_1} ds {d}t  \lambda^*( {d}\gamma)  
\\
& \indent  \cdot \frac{1}{\epsilon ^2} \int_{\Gamma}  \mathbb{E}  \int_\tau^{\tau+\epsilon } \int_{\tau }^{t} \big|\cal D_{s} \dbS_\gamma(t)-\nabla\mathbb{S}_\gamma(s)\big|^2_{\cal L_2(H_1;H)} ds {d}t  \lambda^*( {d}\gamma)  . 
\end{align*}
By Lemma \ref{lxgz} with $\widetilde H={\cal L_2(H_1;H)}$, there exists a sequence  $\{\epsilon_{n}\}_{n=1}^{\infty}$ of positive numbers such that $\epsilon_n\to 0^+$ as $n\to\infty$ and 
\begin{equation*} 
\lim_{n\to  \infty}\frac{1}{\epsilon_{n}^2}\int_\Gamma \int_{\t}^{\t+\epsilon_{n}} \int_{\t}^{t}\dbE \big|\cal D_{s} \dbS_\gamma(t)-\nabla\mathbb{S}_\gamma(s)\big|^2_{\cal L_2(H_1;H)} dsdt \lambda^*(d\gamma )=0, \quad a.e.\ \t\in[t_i,T).
\end{equation*}
Together with the condition that  $\bar u(\cdot)\in \cal U^4[0,T]$, there exists a sequence  $\{\epsilon_{n}\}_{n=1}^{\infty}$ of positive numbers such that $\epsilon_n\to 0^+$ as $n\to\infty$ and 
\begin{equation*} 
    \lim_{n\to  \infty}\frac{1}{\epsilon_n^2} |I_{6a}(\epsilon_n )| =0,  \quad a.e.\ \t\in[t_i,T). 
\end{equation*}
Besides, for the term $I_{6b}(\epsilon)$, by (A3) and Lemma \ref{ldgsj},  for $a.e.  \t\in[t_i,T)$, we see that 
\begin{align*}
    \lim_{\epsilon \to0} \frac{1}{\epsilon ^2} I_{6b}(\epsilon) &=  \frac{1}{2} \int_{\Gamma}   \mathbb{E} \big(\big\langle   \partial_{u}b_\gamma[\tau ] (v^k-\bar{u}(\tau )) ,   \nabla\mathbb{S}_\gamma(\tau )^{*}(v^k-\bar{u}(\tau ))   \big\rangle_H  \chi_{F_{ij}}(\omega) \big) \lambda^*( {d}\gamma) .   
\end{align*}
The term $I_7(\epsilon)$ can be dealt with as that for $I_6(\epsilon)$.  There exists a subsequence of the above $\{\epsilon_{n}\}_{n=1}^{\infty}$ which is still denoted by itself for simplicity of positive numbers such that $\epsilon_n\to 0^+$ as $n\to\infty$, and 
\begin{align*}
    & \lim_{n\to  \infty}\frac{1}{\epsilon_n^2} I_7(\epsilon_n) = \frac{1}{2} \int_{\Gamma}  \mathbb{E} \big(\big\langle  \partial_{u}b_\gamma[\tau] (v^k-\bar{u}(\tau)) ,   \dbS_\gamma(\tau )^{*}  \nabla\bar{u}(\tau)  \big\rangle_H  \chi_{F_{ij}}(\omega) \big) \lambda^*( {d}\gamma), \quad   a.e.\ \t\in[t_i,T). 
\end{align*}

For the arbitrarily fixed $i,j,k$ in the beginning of the proof, from \eqref{nic123} and the above estimates for $I_1(\epsilon), I_2(\epsilon), I_3(\epsilon), I_4(\epsilon), I_5(\epsilon), I_6(\epsilon), I_7(\epsilon)$, there exists a Lebesgue measurable set $E_{i,j}^k\subset[t_i,T)$ satisfying $|E_{i,j}^k|=0$ and $|\cup_{i,j,k\in \dbN} E_{i,j}^k|=0$, such that 
\begin{align*}
&  \int_{\Gamma}  \mathbb{E} \big(\big\langle \partial_{u}a_\gamma[\tau ] (v^k-\bar{u}(\tau )) ,   \mathbb{S}_\gamma(\tau )^* (v^k-\bar{u}(\tau ) )\big\rangle_H \chi_{F_{ij}}(\omega) \big) \lambda^*( {d}\gamma)
    \\
& \indent    +   \int_{\Gamma}   \mathbb{E} \big(\big\langle   \partial_{u}b_\gamma[\tau ] (v^k-\bar{u}(\tau )) ,   \nabla\mathbb{S}_\gamma(\tau )^{*}(v^k-\bar{u}(\tau ))   \big\rangle_H  \chi_{F_{ij}}(\omega) \big) \lambda^*( {d}\gamma) 
\\
& \indent  -  \int_{\Gamma}  \mathbb{E} \big(\big\langle  \partial_{u}b_\gamma[\tau] (v^k-\bar{u}(\tau)) ,   \dbS_\gamma(\tau )^{*}  \nabla\bar{u}(\tau)  \big\rangle_H  \chi_{F_{ij}}(\omega) \big) \lambda^*( {d}\gamma) 
  \\
& \le 0, \quad   \forall\tau\in[t_i,T)\backslash \cup_{i,j,k\in \dbN} E_{i,j}^k. 
\end{align*}
Recall that   $\{F_{ij}\}_{j=1}^{\infty}(\subset\mathcal{F}_{t_i})$ are taken  generating $\mathcal{F}_{t_i}$ and $\{\cF_t\}_{t\ge0}$ is the natural filtration, together with the density of $\{v^k\}_{k=1}^\infty$ in $U$ and by the Assumption (A8) and Fubini's theorem, it yields 
\begin{align*}
    &  \int_{\Gamma}  \big\langle \partial_{u}a_\gamma[\tau ] (v -\bar{u}(\tau )) ,   \mathbb{S}_\gamma(\tau )^* (v -\bar{u}(\tau ) )\big\rangle_H   \lambda^*( {d}\gamma)
        \\
    & \indent    +   \int_{\Gamma} \big\langle   \partial_{u}b_\gamma[\tau ] (v -\bar{u}(\tau )) ,   \nabla\mathbb{S}_\gamma(\tau )^{*}(v -\bar{u}(\tau ))   \big\rangle_H    \lambda^*( {d}\gamma) 
    \\
    & \indent  -  \int_{\Gamma} \big\langle  \partial_{u}b_\gamma[\tau] (v -\bar{u}(\tau)) ,   \dbS_\gamma(\tau )^{*}  \nabla\bar{u}(\tau)  \big\rangle_H \lambda^*( {d}\gamma) 
      \\
    & \le 0,  \quad \dbP\mbox{-}a.s.\quad  \forall(\tau,v)\in([t_i,T)\backslash \cup_{i,j,k\in\mathbb{N}}E_{i,j}^k)\times U. 
\end{align*}
Recall the arbitrariness of $t_i \in[0,T)$, the proof is completed.
\end{proof}

\section{Motivating examples}\label{slme}

In \cite{jing23+}, we present a synthetic example to demonstrate the effectiveness of second order necessary conditions for optimal stochastic control under model uncertainty. These conditions help to narrow down the set of candidate optimal controls, especially when the maximum principle fails to provide useful information.
In detail, with one dimensional setting, we consider the case $[0,T]=[0,1], U=[-1,1]$, $\Gamma=\{1,2\}$, $\Lambda=\{\lambda^ \mathfrak{i} | \mathfrak{i} \in[0,1]\}$, and $\lambda^\mathfrak{i} (\{1\})= \mathfrak{i} , \lambda^\mathfrak{i} (\{2\})=1- \mathfrak{i} $.
The controlled systems corresponding to uncertainty parameter $\gamma=1,2$ are respectively
\begin{equation*}
\left\{
\begin{aligned}
&\mathrm{d}x_1(t)= u(t)\mathrm{d}t+u(t)\mathrm{d}W(t), \quad t\in[0,1],         \\
&x_1(0)=0,
\end{aligned}
\right.
\indent  \text{and} \indent  
\left\{
\begin{aligned}
&\mathrm{d}x_2(t)=u(t)\mathrm{d}t, \quad t\in[0,1],           \\
&x_2(0)=0.
\end{aligned}
\right.
\end{equation*}
Take 
    $f_1(t,x_1(t),u(t)) = \frac{1}{2}|u(t)|^2$, 
    $f_2(t,x_2(t),u(t)) =  \frac{1}{4}|u(t)|^4 $,  
    $h_1(x_1(1))=-\frac{1}{2}|x_1(1)|^2$, $h_2(x_2(1))=-\frac{1}{2}|x_2(1)|^2$. 
Then the robust cost functional is given by
\begin{align*}
  J(u(\cdot);\lambda^*) =\sup_{ \mathfrak{i} \in[0,1]}\Big\{   \mathfrak{i} \Big(\frac{1}{2} \mathbb{E}\int_0^1|u(t)|^2\mathrm{d}t-\frac{1}{2}\mathbb{E}|x_1(1)|^2\Big) 
  +(1- \mathfrak{i} ) \Big(\frac{1}{4}\mathbb{E}\int_0^1|u(t)|^4\mathrm{d}t-\frac{1}{2}\mathbb{E}|x_2(1)|^2\Big) \Big\}.
\end{align*}
It was shown that in this example, the function $\bar{u}(\cdot) \equiv 0$ defined on $t\in[0,1]$ can not be excluded from the candidate set of optimal controls satisfying classical maximum principle. 
However, by taking $u(t)\equiv1, t\in[0,1]$, it can be verified that $J( {u}(\cdot))\le -\frac{1}{4}< 0 =J(\bar{u}(\cdot))$, which indicates that $\bar{u}(\cdot) \equiv 0$ on $t\in[0,1]$ is not one of the optimal controls. 
This fact can be concluded by our results of second order necessary conditions.

Moreover, compared with \cite{jing23+}, the infinite dimensional setting in this paper can be adapted to obviously more general cases, such as  controlled  stochastic Schr\"odinger equations, stochastic Cahn-Hilliard equations, stochastic Korteweg de Vries equations,  and others. 

Currently, it is widely acknowledged that optimal controls in practice are determined through numerical calculations. The theoretical results serve as a foundation for such computations. 
Now we give another example to demonstrate that there are cases satisfying the assumptions imposed for the integral type necessary conditions, which is a reconstruction to some extent, but we still sketch some of the details for the convenience of readers. And actually numerous models satisfying the conditions assumed in this paper can be generated in the similar manner.

Let $H=L^{2}(0,1) \times L^{2}(0,1)$, $U=H_{0}^{1}(0,1)\times B_{H_{0}^{1}(0,1)}\subset H_1=H_{0}^{1}(0,1)\times H_{0}^{1}(0,1)$, where $B_{H_{0}^{1}(0,1)}$ is the closed unit ball in $H_{0}^{1}(0,1)$. 
Take $\Gamma=\{1,2\}$, $\Lambda=\{\lambda^ \mathfrak{i} | \mathfrak{i} \in[0,1]\}$, where  $\lambda^\mathfrak{i} (\{1\})= \mathfrak{i} , \lambda^\mathfrak{i} (\{2\})=1- \mathfrak{i} $.
Define $A: D(A)= [H^{2}(0,1) \cap H_{0}^{1}(0,1) ] \times [H^{2}(0,1) \cap H_{0}^{1}(0,1) ] \rightarrow L^{2}(0,1) \times L^{2}(0,1) $ and $A f=\partial_{\xi \xi} f$.
Consider the following coefficients 
\begin{align*}
    a_\gamma (t, x, u)=
\begin{pmatrix}
    a_{11,\gamma } & a_{12,\gamma} \\
    0 & a_{22,\gamma}
\end{pmatrix}
\begin{pmatrix}
    \varphi_{1} \\
    \varphi_{2}
\end{pmatrix}
+
\begin{pmatrix}
    u_{1} \\
    0
\end{pmatrix}, \quad 
    b_\gamma(t, x, u)= 
    \begin{pmatrix}
    b_{11,\gamma} & b_{12,\gamma} \\
    0 & 0
\end{pmatrix}
     \begin{pmatrix}
    \varphi_{1} \\
    \varphi_{2}
\end{pmatrix}
    +
    \begin{pmatrix}
    0 \\
    u_{2}^{2}
\end{pmatrix}, 
\end{align*}
where $a_{11,\gamma}, a_{12,\gamma}, a_{22,\gamma}, b_{11,\gamma}, b_{12,\gamma} \in L^{\infty}(0, T), \gamma \in \Gamma $, $x=\begin{pmatrix}
    \varphi_{1},
    \varphi_{2}
\end{pmatrix}^{\top},
u=
\begin{pmatrix}
    u_{1},
    u_2
\end{pmatrix}^{\top}$. 
Under the above setting, the state equation becomes 
\begin{equation*}
    \left\{
\begin{aligned}
&d \varphi_{1,\gamma }=\left(\partial_{\xi \xi} \varphi_{1,\gamma }+a_{11,\gamma } \varphi_{1,\gamma }+a_{12,\gamma } \varphi_{2,\gamma }+u_{1}\right) d t  
\\
& \indent \indent\indent\indent\indent +\left(b_{11,\gamma } \varphi_{1,\gamma }+b_{12,\gamma } \varphi_{2,\gamma }\right) d W(t)  &  \text{in}\  (0, T] \times(0,1), 
\\
&
d \varphi_{2,\gamma }=\left(\partial_{\xi \xi} \varphi_{2,\gamma }+a_{22,\gamma } \varphi_{2,\gamma }\right) d t+u_{2}^{2} d W(t)   &   \text{in}\  (0, T] \times(0,1), 
\\
&
\varphi_{1,\gamma }(t, 0)=\varphi_{1,\gamma }(t, 1)=\varphi_{2,\gamma }(t, 0)=\varphi_{2,\gamma }(t, 1)=0   &   \text{in}\  (0, T], 
\\
&
\varphi_{1,\gamma }(0, \xi)=\phi (\xi)  & \text{on}\  (0,1), 
\\
&
\varphi_{2,\gamma }(0, \xi)=0  &   \text{on}\  (0,1),
\end{aligned}
\right.
\end{equation*}
where 
$\phi =\sum_{n=1}^{\infty} a_{n } \sqrt{2} \sin n \pi \xi \in L^{2}(0,1)$. 
Further, take the cost functional with respect to  parameters $\gamma =1,2$ as follows:
\begin{align*}
     {J}(u;1)= \frac{1}{2}  \mathbb{E} \langle\varphi_{1,1}(T), \varphi_{1,1}(T) \rangle_{L^{2}(0,1)} ,
\quad 
     {J}(u;2)=   \frac{1}{2}  \mathbb{E} \langle\varphi_{2,2}(T), \varphi_{2,2}(T) \rangle_{L^{2}(0,1)}. 
\end{align*}
Then the corresponding cost functional ${J}(u;\lambda^*)$ has the following form
\begin{align*}
\sup_{\lambda^\mathfrak{i} \in \Lambda } \int_{\Gamma }  {J}(u;\gamma ) \lambda^\mathfrak{i} (d \gamma )
= \sup_{\mathfrak{i} \in[0,1]} \frac{1}{2}\big\{ \mathfrak{i} \mathbb{E} \langle\varphi_{1,1}(T), \varphi_{1,1}(T) \rangle_{L^{2}(0,1)} + (1-\mathfrak{i}) \mathbb{E} \langle\varphi_{2,2}(T), \varphi_{2,2}(T) \rangle_{L^{2}(0,1)} \big\}. 
\end{align*}

To apply the $V$-transposition solution, take $V=H_{0}^{1}(0,1)\times H_{0}^{1}(0,1)$ and $V^*=H^{-1}(0,1)\times H^{-1}(0,1)$. 
The two emdedding operators 
$H_0^1(0,1)\hookrightarrow L^2(0,1)\hookrightarrow H^{-1}(0,1)$
are Hilbert-Schmidt, which can be proved by taking feasible orthogonal basis. 

Take $f(t, \xi)=e^{\int_{0}^{t} a_{11,1} d s-\frac{1}{2} \int_{0}^{t} b_{11,1}^{2} d s+\int_{0}^{t} b_{11,1} d W(s)} \sum_{n=1}^{\infty} (-\frac{a_{n}}{T} e^{- (n^{2} \pi^{2}+1 / 2 ) t+W(t)}) \sqrt{2} \sin n \pi \xi$. 
It can be verified that 
$f \in L_{\mathbb{F}}^{2} (0, T ; H_{0}^{1}(0,1) )$ and further 
$f \in \mathbb{L}_{2, \mathbb{F}}^{1,2} (H_{0}^{1}(0,1) )$.
Besides, it can be verified that 
$(
    u_{1},   
    u_2
)^{\top}
=
 (
    f,  
    0
 )^{ \top}$
is an optimal control, since it  satisfies  
$\varphi_{1,1}(T)=0$ and $\varphi_{2,2}(T)=0$, which results in $ {J}((
    f,  
    0
)^{\top};\lambda^*)=0$, $\lambda^*\in \Lambda $. 

From the above formulation, it is obvious that 
$
\partial_x a_{\gamma }= \begin{pmatrix} 
    a_{11,\gamma } & a_{12,\gamma} \\
    0 & a_{22,\gamma}
    \end{pmatrix} 
    \in L_{\mathbb{F}}^{\infty} (0, T ; \mathcal{L}_{H V } ), 
    \partial_x b_{\gamma }= 
    \begin{pmatrix} 
    b_{11,\gamma} & b_{12,\gamma} \\
    0 & 0
    \end{pmatrix} 
    \in L_{\mathbb{F}}^{\infty} (0, T ; \mathcal{L}_{H V } ). 
$
The first order adjoint equations for any $\gamma \in \Gamma $ are 
\begin{equation*}
\left\{
\begin{aligned}
& d \rp_{1,\gamma}=- (\partial_{x x} \rp_{1,\gamma}+a_{11,\gamma} \rp_{1,\gamma}+b_{11,\gamma} \rq_{1,\gamma} ) d t+\rq_{1,\gamma} d W(t)   & \text{in}\ [0, T) \times(0,1), 
\\
& 
d \rp_{2,\gamma}=- (\partial_{x x} \rp_{2,\gamma}+a_{12,\gamma} \rp_{1,\gamma}+a_{22,\gamma} \rp_{2,\gamma}+b_{12,\gamma} \rq_{1,\gamma} ) d t+\rq_{2,\gamma} d W(t) & \text{in}\  [0, T) \times(0,1), 
\\
& 
\rp_{1,\gamma}(\cdot, 0)=\rp_{1,\gamma}(\cdot, 1)=\rp_{2,\gamma}(\cdot, 0)=\rp_{2,\gamma}(\cdot, 1)=0     & \text{on}\  [0, T), 
\\
&
\rp_{1,\gamma}(T, \cdot)=\rp_{2,\gamma}(T, \cdot)=0   & \text{in}\  (0,1) .
\end{aligned}
\right.
\end{equation*}
It can be deduced 
$(\rp_{1,\gamma},\rq_{1,\gamma})\equiv0$ and $(\rp_{2,\gamma},\rq_{2,\gamma})\equiv0$, 
which further implies that 
\begin{align*}
& \mathbb{H} (t, \begin{pmatrix}
    \bar\varphi_{1,\gamma} \\
    \bar\varphi_{2,\gamma}
\end{pmatrix}, \begin{pmatrix}
    f \\
    0
\end{pmatrix}, \begin{pmatrix}
    \rp_{1,\gamma} \\
    \rp_{2,\gamma}
\end{pmatrix}, \begin{pmatrix}
    \rq_{1,\gamma} \\
    \rq_{2,\gamma}
\end{pmatrix}) 
\\ 
& \indent  = \Big\langle \begin{pmatrix}
    \rp_{1,\gamma} \\
    \rp_{2,\gamma}
\end{pmatrix}, \begin{pmatrix}
    a_{11,\gamma } & a_{12,\gamma} \\
    0 & a_{22,\gamma}
\end{pmatrix}
\begin{pmatrix}
    \bar\varphi_{1,\gamma} \\
    \bar\varphi_{2,\gamma}
\end{pmatrix}
+
\begin{pmatrix}
    f \\
    0
\end{pmatrix} \Big\rangle + \Big\langle \begin{pmatrix}
    \rq_{1,\gamma} \\
    \rq_{2,\gamma}
\end{pmatrix}, \begin{pmatrix}
    b_{11,\gamma} & b_{12,\gamma} \\
    0 & 0
\end{pmatrix}
     \begin{pmatrix}
        \bar\varphi_{1,\gamma} \\
        \bar\varphi_{2,\gamma}
\end{pmatrix} \Big\rangle 
 \equiv0. 
\end{align*}
For any $\lambda \in \Lambda^{\bar u}, \bar u={\begin{pmatrix} f, 0 \end{pmatrix}}^{\top}$, the following conditions hold for $a.e. \ (t,\o)\in [0,T]\times \O$ and $\forall v\in U$:
\begin{equation*}
\left\{
\begin{aligned}
        &\int_\Gamma   \big\langle \partial_u\dbH_\gamma [t], v - \bar u(t) \big\rangle_{H_{1}}  \lambda (d\gamma )=0 , 
        \\
        &  \int_\Gamma  \big\langle \big(\partial_{uu} \dbH_\gamma [t] + \partial_u b_\gamma[t]^{*}P_\gamma (t)\partial_u b_\gamma[t]\big) ( v    -   \bar u(t) ),  v   -     \bar u(t)  \big\rangle_{H_{1}}   \lambda (d\gamma )=0, 
\end{aligned}
\right.
\end{equation*}
since $\mathbb{H} \equiv0$ and $\partial_u b_\gamma[t]\equiv0$ for any $t\in[0,T]$. 
Similarly, 
$ \dbS_\gamma(t)= \partial_{u}a_\gamma[t]^* P_\gamma(t) = \begin{pmatrix}
        P_{11,\gamma } & P_{12,\gamma} \\
        0 & 0 
    \end{pmatrix}$, 
which further indicates that $\nabla\mathbb{S}_\gamma(t)=0, t\in[0,T]$.

The second order operator-valued adjoint equations for any $\gamma\in \Gamma $ are repectively 
\begin{equation*}
    \left\{
\begin{aligned}
    & d P_\gamma (t) =- (A^{*} +\partial_x a_{\gamma }^{*}[t] ) P_\gamma (t) d t-P_\gamma (t) (A+\partial_x a_{\gamma }[t] ) d t-\partial_x b_{\gamma }^{*}[t] P_\gamma  (t) \partial_x b_{\gamma }[t] d t
    \\
    & \indent\indent\indent 
    - (\partial_x b_{\gamma }^{*}[t] Q_\gamma(t) +Q_\gamma (t) \partial_x b_{\gamma }[t] ) d t+Q_\gamma (t) d W(t) \quad    \text{in}\ [0, T),  \\
    & P_\gamma (T)= \begin{pmatrix} 
        (\gamma-2) I & 0 \\
        0 & (1-\gamma ) I
        \end{pmatrix}, \  \gamma \in \Gamma.  
\end{aligned}
    \right. 
\end{equation*}
Denote 
$ P_\gamma =
    \begin{pmatrix}
        P_{11,\gamma } & P_{12,\gamma} \\
        P_{21,\gamma} & P_{22,\gamma}
    \end{pmatrix},  
    Q_\gamma =
    \begin{pmatrix}
        Q_{11,\gamma } & Q_{12,\gamma} \\
        Q_{21,\gamma} & Q_{22,\gamma}
    \end{pmatrix}$. 
It is obvious that $(P_\gamma, Q_\gamma)=(\tilde P_\gamma, 0)$, where $\tilde P_\gamma$ is the mild solution to the following deterministic evolution equation  
\begin{equation*}
    \left\{
\begin{aligned}
    & d \tilde P_\gamma (t) =- (A^{*} +\partial_x a_{\gamma }^{*}[t] ) \tilde P_\gamma (t) d t - \tilde P_\gamma (t) (A+\partial_x a_{\gamma }[t] ) d t-\partial_x b_{\gamma }^{*}[t] \tilde P_\gamma  (t) \partial_x b_{\gamma }[t] d t
      \quad    \text{in}\ [0, T),  \\
    & \tilde P_\gamma (T)= \begin{pmatrix} 
        (\gamma-2) I & 0 \\
        0 & (1-\gamma ) I
        \end{pmatrix}, \  \gamma \in \Gamma,
\end{aligned}
    \right.
\end{equation*}
and moreover, $P_\gamma (\cdot) \in C_{\mathcal{S}}([0, T] ; \mathcal{L}(H)) $, $ P_{11,\gamma }(t)=\tilde{P}_{11,\gamma }(t)<0, t \in[0, T]$.  
Besides, it can be verified that 
$P_\gamma (\cdot) \in \mathbb{L}_{2, \mathbb{F}}^{1,2}(\mathcal{L}_{2}[ H_{0}^{1}(0,1) \times H_{0}^{1}(0,1) ;L^{2}(0,1) \times L^{2}(0,1)]) \cap L^{\infty}([0, T] \times \Omega ; \mathcal{L}_{2}[H_{0}^{1}(0,1) \times H_{0}^{1}(0,1) ;L^{2}(0,1) \times L^{2}(0,1)])$. 
After the detour, we have verified the conditions. 

\section{Appendix}\label{slyz}
\subsection{Elementary lemmas}
    
\begin{lemma}[Minimax Theorem \cite{pham09}]\label{mimmaxthe}
    Let $\mathcal{M}$  be a convex subset of a normed vector space,  and  $\mathcal{N}$  a weakly compact convex subset of a normed vector space. If  $\varphi$  is a real-valued function on $ \mathcal{M} \times  \mathcal{N} $ with 
    
    $x\to \varphi (x,y)$ is continuous and convex on $\mathcal{M}$ for all $y\in \mathcal{N}$,

    $y\to \varphi (x,y)$ is concave on $\mathcal{N}$ for all $x\in \mathcal{M}$.

    \noindent 
    Then 
\[\sup_{y\in \mathcal{N}}\inf_{x\in \mathcal{M}}\varphi (x,y) =\inf_{x\in \mathcal{M}}\sup_{y\in \mathcal{N}}\varphi (x,y).\]
\end{lemma}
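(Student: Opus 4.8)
The plan is to prove the two inequalities separately, the trivial ``weak duality'' direction and the substantive reverse direction, and then combine them. For weak duality, one observes that for every fixed $x_0\in\mathcal M$ and $y_0\in\mathcal N$,
\[
\inf_{x\in\mathcal M}\varphi(x,y_0)\le\varphi(x_0,y_0)\le\sup_{y\in\mathcal N}\varphi(x_0,y).
\]
The leftmost quantity depends only on $y_0$ and the rightmost only on $x_0$, so taking the supremum over $y_0$ on the left and the infimum over $x_0$ on the right yields $\sup_{y}\inf_{x}\varphi(x,y)\le\inf_{x}\sup_{y}\varphi(x,y)$. This step uses none of the convexity, concavity, or compactness hypotheses; all the content of the theorem lives in the reverse inequality $\inf_{x}\sup_{y}\varphi\le\sup_{y}\inf_{x}\varphi$.

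For the reverse inequality I would argue by a superlevel-set reduction. Fix an arbitrary real $c<\inf_{x\in\mathcal M}\sup_{y\in\mathcal N}\varphi(x,y)$ and aim to produce a single $\bar y\in\mathcal N$ with $\varphi(x,\bar y)\ge c$ for every $x\in\mathcal M$; this would force $\sup_{y}\inf_{x}\varphi\ge\inf_{x}\varphi(x,\bar y)\ge c$, and letting $c\uparrow\inf_{x}\sup_{y}\varphi$ then closes the argument. To find such a $\bar y$, introduce for each $x\in\mathcal M$ the set $F_x:=\{y\in\mathcal N:\varphi(x,y)\ge c\}$. Concavity of $\varphi(x,\cdot)$ makes each $F_x$ convex, and the upper semicontinuity of $\varphi(x,\cdot)$ supplied by the application makes it norm-closed, hence weakly closed by Mazur's theorem. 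Since $\mathcal N$ is weakly compact, it suffices to verify that the family $\{F_x\}_{x\in\mathcal M}$ has the finite intersection property, for then $\bigcap_{x\in\mathcal M}F_x\ne\emptyset$ and any point of this intersection is the required $\bar y$.

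The finite intersection property is the heart of the matter and is precisely where convexity in $x$ couples to concavity in $y$; I would establish it by a Knaster--Kuratowski--Mazurkiewicz (KKM) argument, i.e.\ the standard mechanism behind Sion's minimax theorem. Given $x_1,\dots,x_n\in\mathcal M$, consider the standard simplex $\Delta=\{\ell=(\ell_1,\dots,\ell_n):\ell_i\ge0,\ \sum_i\ell_i=1\}$ together with the convex combinations $x_\ell:=\sum_i\ell_i x_i\in\mathcal M$, which are legitimate because $\mathcal M$ is convex. Since $c<\inf_x\sup_y\varphi$, every $x_\ell$ admits some $y$ with $\varphi(x_\ell,y)>c$; concavity of $\varphi(x_\ell,\cdot)$ ensures that convex combinations of such witnesses again exceed $c$, which is exactly the KKM covering condition for the closed sets attached to the vertices of $\Delta$, while convexity and continuity of $\varphi(\cdot,y)$ control the value of $\varphi$ along the segments $\ell\mapsto x_\ell$. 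The KKM lemma then produces a point common to all cover members, which translates into a single $y$ with $\varphi(x_i,y)\ge c$ for all $i$, i.e.\ $\bigcap_{i=1}^n F_{x_i}\ne\emptyset$.

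I expect the main obstacle to be the interplay between the infinite-dimensional weak topology and the semicontinuity/closedness requirements: one must justify that the superlevel sets $F_x$ are weakly closed (convexity from concavity, plus norm-closedness from the upper semicontinuity that the concrete setting provides, upgraded to weak closedness via Mazur), and that the finite intersection property combined with weak compactness actually delivers a nonempty total intersection. A secondary technical point is the careful verification of the KKM covering condition along the simplex, where the convex/concave structure must be used in both variables simultaneously; this is the step that genuinely requires both hypotheses and cannot be bypassed. Since the statement is quoted verbatim from \cite{pham09}, an alternative is simply to invoke Sion's minimax theorem directly, the above being a sketch of its proof under the present hypotheses.
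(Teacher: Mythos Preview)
The paper does not prove this lemma at all; it is merely stated in the appendix with a citation to \cite{pham09} as an elementary auxiliary result. So there is no ``paper's proof'' to compare against, and your sketch goes well beyond what the paper does.

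Your outline is the standard Sion--Ky Fan route (weak duality plus a superlevel-set/FIP argument powered by KKM), and the structure is sound. One point deserves flagging: the lemma as stated assumes only concavity of $\varphi(x,\cdot)$, with no semicontinuity hypothesis in $y$. Your step ``the upper semicontinuity of $\varphi(x,\cdot)$ supplied by the application makes $F_x$ norm-closed, hence weakly closed via Mazur'' imports an assumption not present in the statement. You correctly signal that this extra regularity is available in the paper's concrete use of the lemma (where $\varphi$ is an integral functional linear in the measure $\lambda$, so continuity in $y$ is automatic), but as a proof of the lemma \emph{as stated} this is a gap: without some upper semicontinuity in $y$, the superlevel sets $F_x$ need not be weakly closed and the compactness/FIP conclusion does not follow directly. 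If you want a self-contained proof of exactly the stated version, you would need to invoke a lopsided minimax theorem (e.g.\ of Kneser or Ky Fan type) that dispenses with semicontinuity in the concave variable, or else make explicit that the cited source supplies the missing hypothesis. For the purposes of this paper, simply citing \cite{pham09} as the authors do is entirely adequate.
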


\begin{lemma}[Partitions of Unity Theorem \cite{rudin06}]\label{partunith}
    Denote by $\{\mathcal{V}_{i}\}_{i=1}^n$ any open subsets of a locally compact Hausdorff space $ \mathfrak{M}  $. If a compact set $\mathcal{K} $ satisfies 
    $\mathcal{K} \subset \mathcal{V}_1 \cup \ldots \cup \mathcal{V}_n, $ 
    then there exists $\{\varphi_i\}_{i=1}^n$ satisfying 
    $\Sigma_{i=1}^n \varphi_i(x)=1,  x\in \mathcal{K},$ 
    where $\varphi_i$ is continuous on $\mathfrak{M}$, with its compact support lies in $\mathcal{V}_i$, $0\le \varphi_i \le 1$, $i=1,\ldots  n$.
\end{lemma}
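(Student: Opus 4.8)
The plan is to follow the classical construction for locally compact Hausdorff spaces, reducing the statement to Urysohn's lemma together with a covering (``shrinking'') argument. The only structural input beyond the Hausdorff property is local compactness, which guarantees that every point of an open set possesses an open neighbourhood with compact closure lying inside that open set.

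First I would perform the shrinking step. For each $x\in\mathcal{K}$, pick an index $i(x)$ with $x\in\mathcal{V}_{i(x)}$; by local compactness and the Hausdorff property there is an open set $W_x$ with $x\in W_x$, with $\overline{W_x}$ compact, and with $\overline{W_x}\subset\mathcal{V}_{i(x)}$. The family $\{W_x\}_{x\in\mathcal{K}}$ is an open cover of the compact set $\mathcal{K}$, so finitely many of them, say $W_{x_1},\dots,W_{x_m}$, already cover $\mathcal{K}$. For each $i\in\{1,\dots,n\}$ set $H_i:=\bigcup\{\overline{W_{x_\ell}}:\ i(x_\ell)=i\}$, a finite union of compacta, hence itself compact, with $H_i\subset\mathcal{V}_i$ and $\mathcal{K}\subset\bigcup_{i=1}^n H_i$.

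Next I would invoke Urysohn's lemma in its locally compact Hausdorff form: for each $i$, since $H_i$ is compact, $\mathcal{V}_i$ open, and $H_i\subset\mathcal{V}_i$, there is a continuous $g_i:\mathfrak{M}\to[0,1]$ with compact support contained in $\mathcal{V}_i$ and $g_i\equiv1$ on $H_i$. The partition is then produced by the telescoping product
\[
\varphi_i:=\Big(\prod_{j=1}^{i-1}(1-g_j)\Big)g_i,\qquad i=1,\dots,n,
\]
with the empty product (for $i=1$) understood to equal $1$. Each $\varphi_i$ is continuous, satisfies $0\le\varphi_i\le1$, and $\operatorname{supp}\varphi_i\subset\operatorname{supp}g_i\subset\mathcal{V}_i$. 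A one-line induction on $n$ yields the identity
\[
\sum_{i=1}^n\varphi_i=1-\prod_{i=1}^n(1-g_i).
\]

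Finally I would check the normalisation on $\mathcal{K}$: any $x\in\mathcal{K}$ lies in some $H_{i_0}$, whence $g_{i_0}(x)=1$, so the factor $1-g_{i_0}(x)=0$ forces the product above to vanish and $\sum_{i=1}^n\varphi_i(x)=1$. I expect the only genuinely nontrivial ingredient to be the locally compact version of Urysohn's lemma, equivalently the existence of the compact-closure neighbourhoods used in the shrinking step; the telescoping product identity together with the support and normalisation verifications are routine. Since the paper cites \cite{rudin06}, one may alternatively quote these two ingredients directly from that reference and assemble them exactly as above.
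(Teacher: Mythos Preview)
Your proof is correct and is essentially the standard argument found in Rudin's \emph{Real and Complex Analysis}, which the paper cites. Note, however, that the paper itself does not supply a proof of this lemma at all: it is stated in the Appendix as a known result with a reference to \cite{rudin06}, so there is no ``paper's own proof'' to compare against.
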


\subsection{Preliminaries in Malliavin calculus}
A comprehensive introduction to Malliavin calculus can be found in the textbook \cite{NualartDavidEulalia}. Here, we present only a few elementary definitions and symbols to be used.

For any $h\in L^2(0,T)$, denote $W(h)=\int_0^T h(t) {d}W(t)$.
Let $\widetilde H$ be a separable Hilbert space and denote by $C_b^\infty (\mathbb R^j)$ the set of $C^\infty$-smooth functions with bounded partial derivatives. 
For any smooth $\widetilde{H}$-valued random variable of the form  
\[F=\sum_{j=1}^{k}f_j(W(h_1),W(h_2),\ldots,W(h_{j_m}))\mathfrak h_j, \]
where 
$\mathfrak h_j\in\widetilde{H}, j=1,\dots,k$, $f_j\in C_b^\infty(\mathbb{R}^{j_m}), h_{j_n}\in L^2(0,T), j_n=1,2,\ldots,j_m, k,j_m\in\mathbb{N}$, the Malliavin derivative of $F$ is defined as 
\[\mathcal{D}_t F=\sum_{j=1}^k\sum_{j_n=1}^{j_m}h_{j_n}(t)\frac{\partial f_j}{\partial x_{j_n}}\left(W(h_1),W(h_2),\ldots,W(h_{j_m})\right)\mathfrak h_j.\]
Then $\mathcal{D} F$ is a smooth random variable with values in $L^2(0,T;\widetilde H)$.
Denote by $ \mathbb{D}^{1,2}(\widetilde{H} )$  the completion of the set of the above smooth random variables with respect to the norm 
\[|F|_{\mathbb{D}^{1,2}}=\Big(\mathbb{E}|F|_{\widetilde{H} }^2+\mathbb{E} \int_0^T|\mathcal{D}_t F|_{\widetilde{H} }^2 {d}t \Big)^{\frac{1}{2}}.\]
For $\xi \in\mathbb{D}^{1,2}(\widetilde{H} )$, the following Clark-Ocone representation formula holds:
\begin{equation}\label{claocoori}
\xi =\mathbb{E} \xi +\int_0^T\mathbb{E}\left(\mathcal{D}_t \xi  | \mathcal{F}_t\right) {d}W(t).
\end{equation}

Define $\mathbb{L}^{1,2}(\widetilde{H} )$ to be the space of processes $\varphi\in L^2([0,T]\times \Omega;\widetilde{H})$, such that

(i) for a.e. $t\in[0,T]$, $\varphi(t,\cdot)\in\mathbb{D}^{1,2}(\widetilde{H})$,

(ii) the function $[0,T]\times[0,T]\times\Omega \ni (s,t,\omega )\to \mathcal{D}_s\varphi( t,\omega )\in \widetilde{H}$ admits a measurable version,

(iii) $\$\varphi\$_{1,2}:=\left(\mathbb{E}\int_0^T|\varphi(t)|_{\widetilde{H}}^2 {d}t +\mathbb{E}\int_0^T\int_0^T|\mathcal{D}_s\varphi(t)|_{\widetilde{H}}^2 {d}s {d}t\right)^{\frac{1}{2}}<\infty$.

The set of all adapted processes in $\mathbb{L}^{1,2}(\widetilde{H})$ is denoted by $\mathbb{L}_{\mathbb{F}}^{1,2}(\widetilde{H})$.

Moreover, define
\begin{equation*}
\begin{aligned}
\mathbb{L}_{2^+}^{1,2}(\widetilde{H}):=\Big\{& \varphi(\cdot)\in\mathbb{L}^{1,2}(\widetilde{H}) \mid \exists\ \mathcal{D}^+\varphi(\cdot)\in L^2([0,T]\times \Omega;\mathbb{R}^n) \ \text{such that}  
\\&   
g_\epsilon  (s):=\sup_{s<t<(s+\epsilon )\wedge T}\mathbb{E}|\mathcal{D}_s\varphi(t)-\mathcal{D}^+\varphi(s)|_{\widetilde{H}}^2<\infty\quad  a.e.\  s\in[0,T],   \\
&  g_\epsilon(\cdot)\ \text{is measurable on}\ [0,T]\ \text{for any}\ \epsilon>0,\ \text{and}\ \lim_{\epsilon\to0^+}\int_0^T g_\epsilon(s) {d}s=0 \Big\},
\end{aligned}
\end{equation*}
and
\begin{equation*}
\begin{aligned}
\mathbb{L}_{2^-}^{1,2}(\widetilde{H}):=\Big\{& \varphi(\cdot)\in\mathbb{L}^{1,2}(\widetilde{H}) \mid \exists\ \mathcal{D}^-\varphi(\cdot)\in L^2([0,T]\times \Omega;\widetilde{H}) \ \text{such that}  
\\&   
g_\epsilon(s):=\sup_{(s-\epsilon)\vee0<t<s}\mathbb{E}|\mathcal{D}_s\varphi(t)-\mathcal{D}^-\varphi(s)|_{\widetilde{H}}^2<\infty\quad  a.e.\  s\in[0,T],   
\\&  
g_\epsilon(\cdot)\ \text{is measurable on}\ [0,T]\ \text{for any}\ \epsilon>0,\ \text{and}\ \lim_{\epsilon\to0^+}\int_0^T g_\epsilon(s) {d}s=0 \Big\}.
\end{aligned}
\end{equation*}
Let $\mathbb{L}_{2}^{1,2}(\widetilde{H})=\mathbb{L}_{2^+}^{1,2}(\widetilde{H})\cap\mathbb{L}_{2^-}^{1,2}(\widetilde{H})$.
For any $\varphi\in\mathbb{L}_{2}^{1,2}(\widetilde{H})$, denote $\nabla\varphi(\cdot)=\mathcal{D}^+\varphi(\cdot)+\mathcal{D}^-\varphi(\cdot)$.
$\mathbb{L}_{2,\mathbb{F}}^{1,2}(\widetilde{H})$ consists of all the adapted processes in $\mathbb{L}_{2}^{1,2}(\widetilde{H})$.

{\small
\section*{Competing interests}
The authors have no competing interests to declare that are relevant to the content of this article.  
}

{\small
\section*{Funding}
Research supported by National Key R\&D Program of China (No. 2022YFA1006300) and the NSFC (No. 12271030).
}

{\small
\section*{Acknowledgments}
The authors are grateful to referees and editors for helpful comments that have significantly improved the paper.}

\bibliographystyle{IEEEtran}

\begin{thebibliography}{99}

    \bibitem{BonnansSilva2012amo}   Bonnans, J.F., Silva, F.J.:    First and second order necessary conditions for stochastic optimal control problems.  Appl. Math. Optim. 65,  403-439 (2012)



    \bibitem{cuiqingdenglizhangxu2016crmasp} Cui, Q., Deng, L.,  Zhang, X.:  Pointwise second order necessary conditions for optimal control problems evolved on Riemannian manifolds.  C. R. Math. Acad. Sci. Paris 354, 191-194 (2016)
    
    \bibitem{cuiqingdenglizhangxu2019esaim} Cui, Q., Deng, L.,  Zhang, X.: Second order optimality conditions for optimal control problems on Riemannian manifolds. ESAIM Control Optim. Calc. Var. 25,    (2019)
    
    \bibitem{denglizhangxu2021jde} Deng, L.,  Zhang, X.:  Second order necessary conditions for endpoints-constrained optimal control problems on Riemannian manifolds.    J. Differential Equations 272, 854-910 (2021)
    
    
    \bibitem{dongyuchaomengqinxin2019jota} Dong, Y., Meng, Q.: Second-order necessary conditions for optimal control with recursive utilities.    J. Optim. Theory Appl. 182,  494-524 (2019)
    


    \bibitem{dumeng13}  Du, K.,  Meng, Q.: A maximum principle for optimal control of stochastic evolution equations. SIAM J. Control Optim. 51,  4343-4362 (2013)


    \bibitem{FrankowskaHoehener2017jde}  Frankowska, H., Hoehener, D.:  Pointwise second order necessary optimality conditions and second order sensitivity relations in optimal control.   J. Differential Equations 262,   5735-5772  (2017)
    
    
    \bibitem{Frankowskaluqi2020jde}  Frankowska, H., L\"{u}, Q.:   First and second order necessary optimality conditions for controlled stochastic evolution equations with control and state constraints.  J. Differential Equations 268,  2949-3015  (2020)
    
    
    \bibitem{FrankowskaOsmolovskii2020scl}  Frankowska,  H., Osmolovskii, N.P.:  Distance estimates to feasible controls for systems with final point constraints and second order necessary optimality conditions. Systems Control Lett. 144, 104770, 9 pp.  (2020)
    
    
    \bibitem{FrankowskaOsmolovskii2019amo}  Frankowska,  H., Osmolovskii, N.P.:   Second-order necessary conditions for a strong local minimum in a control problem with general control constraints.  Appl. Math. Optim. 80,   135-164  (2019)
    
    
    \bibitem{FrankowskaOsmolovskii2018siam}  Frankowska,  H., Osmolovskii, N.P.:  Strong local minimizers in optimal control problems with state constraints: second-order necessary conditions.  SIAM J. Control Optim. 56,  2353-2376  (2018)
    
    
    \bibitem{Frankowskazhanghaisenzhangxu2017jde} Frankowska, H., Zhang, H., Zhang, X.:  First and second order necessary conditions for stochastic optimal controls.    J. Differential Equations 262,   3689-3736 (2017)
    
    
    \bibitem{Frankowskazhanghaisenzhangxu2019transams} Frankowska, H., Zhang, H., Zhang, X.: Necessary optimality conditions for local minimizers of stochastic optimal control problems with state constraints.  Trans. Amer. Math. Soc. 372,  1289-1331 (2019)
    
    
    \bibitem{Frankowskazhanghaisenzhangxu2018siam} Frankowska, H., Zhang, H., Zhang, X.: Stochastic optimal control problems with control and initial-final states constraints.  SIAM J. Control Optim. 56,  1823-1855 (2018)
    
    
    \bibitem{Frankowskazhangxu2020spa} Frankowska, H.,  Zhang, X.:  Necessary conditions for stochastic optimal control problems in infinite dimensions. Stochastic Process. Appl. 130,  4081-4103 (2020)
    
    
    \bibitem{fuhute13}  Fuhrman, M.,  Hu, Y., Tessitore, G.:  Stochastic maximum principle for optimal control of SPDEs. Appl. Math. Optim.68, 181-217 (2013) 


    \bibitem{GabasovKirillova1972}  Gabasov, R., Kirillova, F.M.:  High order necessary conditions for optimality.   SIAM J. Control 10, 127-168  (1972)
    
    
    \bibitem{guohanchengxiongjie2018mcrf} Guo, H.,  Xiong, J.: A second-order stochastic maximum principle for generalized mean-field singular control problem.  Math. Control Relat. Fields 8,   451-473 (2018)
    
    \bibitem{haotaomengqingxin2019ejc} Hao, T., Meng, Q.:   A second-order maximum principle for singular optimal controls with recursive utilities of stochastic delay systems.  Eur. J. Control 50, 96-106  (2019)
    
    \bibitem{Haussmann1976mps}  Haussmann, U.G.:  General necessary conditions for optimal control of stochastic systems.  Math. Programming Stud. No. 6, 30-48 (1976)
    
    \bibitem{hlw23} He, W., Luo, P., Wang, F.: Maximum principle for mean-field SDEs under model uncertainty.   Appl. Math. Optim. 87,    (2023)
    
    \bibitem{Hoehener2013amo}  Hoehener, D.:   Feasible perturbations of control systems with pure state constraints and applications to second-order optimality conditions. Appl. Math. Optim. 68,   219-253  (2013)
    
    
    \bibitem{humingshangwangfalei2020siam} Hu, M., Wang, F.: Maximum principle for stochastic recursive optimal control problem under model uncertainty.  SIAM J. Control Optim. 58,  1341-1370 (2020)
    
    \bibitem{hnvw} Hytönen, T.; van Neerven, J.; Veraar, M.; Weis, L.: Analysis in Banach spaces. Vol. I. Martingales and Littlewood-Paley theory. Springer, Cham, (2016) 

    \bibitem{jing23+}  Jing, G.:  Robust pointwise second order necessary conditions for singular stochastic optimal control with model uncertainty.  submitted

    \bibitem{kpq97} El Karoui, N., Peng, S., Quenez, M.C.:  Backward stochastic differential equations in finance.  Math. Finance 7,   1-71  (1997)
    
    \bibitem{KushnerSchweppe1964}  Kushner, H.J.,  Schweppe, F.C.:   A maximum principle for stochastic control systems.     J. Math. Anal. Appl. 8, 287-302  (1964)
    
    \bibitem{KushnerHJ} Kushner, H.J.:  Necessary conditions for continuous parameter stochastic optimization problems.  SIAM J. Control 10, 550-565 (1972)
    
    
    \bibitem{liusongwang23} Liu, G., Song, J., Wang, M.: Maximum principle for recursive optimal control problem of stochastic delayed evolution equations. https://arxiv.org/abs/2310.11376


    \bibitem{liutang23} Liu, G., Tang, S.:  Maximum principle for optimal control of stochastic evolution equations with recursive utilities. SIAM J. Control Optim. 61,  3467-3500 (2023)

    
    \bibitem{louhongwei2010dcds} Lou, H.:   Second-order necessary/sufficient conditions for optimal control problems in the absence of linear structure.  Discrete Contin. Dyn. Syst. Ser. B 14,   1445-1464  (2010)
    
    \bibitem{louhongweiyongjiongmin2018mcrf}  Lou, H., Yong, J.:  Second-order necessary conditions for optimal control of semilinear elliptic equations with leading term containing controls. Math. Control Relat. Fields 8,   57-88 (2018)
    
    
    \bibitem{luqi2016conference} L\"{u}, Q.:  Second order necessary conditions for optimal control problems of stochastic evolution equations, in: Proceedings of the 35th Chinese Control Conference, Chengdu, China, July 27-29,  2620-2625 (2016)
    

    \bibitem{luqizhanghaisenzhangxu2021siam} L\"{u}, Q., Zhang, H., Zhang, X.:  Second Order Necessary Conditions for Optimal Control Problems of Stochastic Evolution Equations.  SIAM J. Control Optim. 59,   2924-2954  (2021)


    \bibitem{luqzhx14} L\"{u}, Q., Zhang, X.:  General Pontryagin-type stochastic maximum principle and backward stochastic evolution equations in infinite dimensions.  Springer Briefs Math.      Springer, Cham, (2014) 


    \bibitem{luqzhx21} L\"{u}, Q., Zhang, X.:   Mathematical control theory for stochastic partial differential equations. Probab. Theory Stoch. Model., 101 Springer, Cham, (2021)  


    \bibitem{luqzhx18} L\"{u}, Q., Zhang, X.: Operator-valued backward stochastic Lyapunov equations in infinite dimensions, and its application, Math. Control Relat. Fields, 8, 337-381 (2018)


    \bibitem{luqzhx15} L\"{u}, Q., Zhang, X.: Transposition method for backward stochastic evolution equations revisited, and its application.  Math. Control Relat. Fields5, no.3, 529-555. (2015)


    \bibitem{MahmudovBashirov1997}  Mahmudov, N.I., Bashirov, A.E.:  First order and second order necessary conditions of optimality for stochastic systems.  Statistics and control of stochastic processes (Moscow, 1995/1996), 283-295, World Sci. Publ., River Edge, NJ (1997)
    
    
    \bibitem{Nguyen2019p}  Nguyen Dinh, T.:   Sequence-based necessary second-order optimality conditions for semilinear elliptic optimal control problems with nonsmooth data.  Positivity 23,   195-217 (2019)
    
    
    \bibitem{NualartDavidEulalia}  Nualart, D.,  Nualart, E.:   Introduction to Malliavin calculus.  Institute of Mathematical Statistics Textbooks, 9. Cambridge University Press, Cambridge (2018) 
    
    \bibitem{Osmolovskii2020jota}  Osmolovskii, N.P.:  Necessary second-order conditions for a strong local minimum in a problem with endpoint and control constraints.  J. Optim. Theory Appl. 185,   1-16 (2020)
    
    
    \bibitem{Osmolovskii2018jmaa}  Osmolovskii, N.P.: Necessary second-order conditions for a weak local minimum in a problem with endpoint and control constraints.  J. Math. Anal. Appl. 457,  1613-1633 (2018)
    
    \bibitem{pardouxpengshige} Pardoux, E., Peng, S.: Adapted solution of a backward stochastic differential equation.
    Systems Control Lett. 14,   55-61 (1990)
    
    \bibitem{pengshige1}  Peng, S.:  A general stochastic maximum principle for optimal control problems.  SIAM J. Control Optim. 28,   966-979 (1990)
    
    \bibitem{pham09} Pham, H.: Continuous-Time Stochastic Control and Optimization with Financial Applications.  61. Springer, New York (2009)
    
    \bibitem{PontryaginBoltyanskiiGamkrelidzeMishchenko1962}  Pontryagin, L.S., Boltyanskii, V.G., Gamkrelidze, R.V.,  Mishchenko, E.F.:   The mathematical theory of optimal processes.  Wiley, New York-London (1962)  
    
    \bibitem{rudin06}  Rudin, W.: Real and Complex Analysis. Tata McGraw-hill education, New York (2006)
    
    
    \bibitem{tangshanjian2010dcds} Tang, S.: A second-order maximum principle for singular optimal stochastic controls.  Discrete Contin. Dyn. Syst. Ser. B 14,   1581-1599 (2010)
    
    \bibitem{warga1978jde} Warga, J.:  A second-order condition that strengthens Pontryagin's maximum principle.  J. Differential Equations 28,  284-307 (1978)
    
    
    \bibitem{yongjiongminzhouxunyu1999book}  Yong, J., Zhou, X.:  Stochastic controls.   Hamiltonian systems and HJB equations. Applications of Mathematics (New York), 43. Springer-Verlag, New York (1999)
    
    
    \bibitem{zhanghaisenzhangxu2015siam} Zhang, H., Zhang, X.:  Pointwise second-order necessary conditions for stochastic optimal controls, Part I: The case of convex control constraint.  SIAM J. Control Optim. 53,  2267-2296 (2015)

    
    \bibitem{zhanghaisenzhangxu2017siam} Zhang, H., Zhang, X.:  Pointwise second-order necessary conditions for stochastic optimal controls, part II: The general case. SIAM J. Control Optim. 55,   2841-2875 (2017)
    

    \bibitem{zhanghaisenzhangxu2018siamreview} Zhang, H., Zhang, X.:  Second-order necessary conditions for stochastic optimal control problems.  SIAM Rev. 60,   139-178 (2018)

    \bibitem{zhanghaisenzhangxu2016scm} Zhang, H., Zhang, X.:  Some results on pointwise second-order necessary conditions for stochastic optimal controls.  Sci. China Math. 59,   227-238 (2016)

    \bibitem{zhouxy93}  Zhou, X.: On the necessary conditions of optimal controls for stochastic partial differential equations.  SIAM J. Control Optim. 31, 1462-1478 (1993)

\end{thebibliography}
\end{document}